\documentclass[11pt]{article}
\usepackage{epsfig,mst-stylefile,amssymb,amsmath,harvard}
\usepackage[svgnames]{xcolor}
\usepackage{multirow}
\usepackage{tabularx,caption}
\usepackage{array}
\usepackage{booktabs}
\usepackage{xr}

\usepackage{caption}
\usepackage[normalem]{ulem}

\textwidth 6.4in \hoffset -.83in \textheight 9in \voffset -.8in

\newcommand{\bbR}{{\Bbb R}}
\renewcommand{\R}{{\Bbb R}}
\newcommand{\bbN}{{\Bbb N}}
\newcommand{\bbC}{{\Bbb C}}

\newcommand{\bbP}{{\Bbb P}}
\newcommand{\bbQ}{{\Bbb Q}}

\newcommand{\bbE}{{\Bbb E}}

\newcommand{\E}{{\Bbb E}}
\newcommand{\C}{{\Bbb C}}

\newcommand{\N}{{\mathbb N}}

\usepackage[normalem]{ulem}

\newcommand{\imag}{{\mathbf i}}

\newcommand{\Cov}{\textnormal{Cov}}

\newcommand{\eig}{\textnormal{eig}}

\DeclareMathOperator{\tr}{tr}

\usepackage[svgnames]{xcolor}

\graphicspath{{figures/}}
\renewcommand{\cite}{\citeyear}

\begin{document}

\title{On operator fractional L\'{e}vy motion: integral representations and time reversibility
\footnotetext{{\em AMS Subject classification}: 60G22, 60G51.}
\footnotetext{{\em Keywords and phrases}: infinite divisibility, L\'{e}vy processes, operator self-similarity.}}

\author{
B.\ Cooper Boniece\\ Department of Mathematics and Statistics \\ Washington University in St.\ Louis  \and Gustavo Didier\thanks{Corresponding author.  Email: \texttt{gdidier@tulane.edu}} \thanks{The second author's long term visits to ENS de Lyon were supported by the school, the CNRS, and the Carol Lavin Bernick faculty grant. The authors are grateful to two anonymous reviewers for the constructive comments that led to a significantly improved manuscript.} \\ Mathematics Department\\ Tulane University}
\date{\vspace{-2ex}}

\maketitle

\begin{abstract}
In this paper, we construct operator fractional L\'{e}vy motion (ofLm), a broad class of non-Gaussian stochastic processes that are covariance operator self-similar, have wide-sense stationary increments and display infinitely divisible marginal distributions. The ofLm class generalizes the univariate fractional L\'evy motion as well as the multivariate operator fractional Brownian motion (ofBm). The ofLm class can be divided into two types, namely, moving average (maofLm) and real harmonizable (rhofLm), both of which share the covariance structure of ofBm under assumptions. We show that maofLm and rhofLm admit stochastic integral representations in the time and Fourier domains, and establish their distinct small- and large-scale limiting behavior. We characterize time reversibility for ofLm through parametric conditions related to its L\'evy measure, starting from a framework for the uniqueness of finite second moment, multivariate stochastic integral representations. In particular, we show that, under non-Gaussianity, the parametric conditions for time reversibility are generally more restrictive than those for the Gaussian case (ofBm).\end{abstract}

\section{Introduction}

Let $X = \{X(t)\}_{t \in \bbR}$ be a $\bbR^p$-valued stochastic process with finite second moments. A process $X$ is called \emph{proper} if its distribution at time $t$ is not concentrated on any proper subspace of $\bbR^p$ for each $t\in \bbR\setminus\{0\}$. We say $X$ is \textit{covariance operator self-similar} (cov.o.s.s.) if its distribution is proper and its covariance function satisfies
\begin{equation}\label{e:cov_oss}
\Cov(X(cs),X(ct)) = c^{H} \Cov(X(s),X(t))c^{H^*}, \quad s,t \in \bbR, \quad c > 0,
\end{equation}
for some (Hurst) matrix $H$ whose eigenvalues have real parts lying in the interval $(0,1]$. In \eqref{e:cov_oss}, $c^H := \exp\{H \log c\} =\sum_{k\in \N}\frac{(H\log c)^k}{k!}$ and $^*$ denotes the (conjugate) transpose. In this paper, we construct \textit{operator fractional L\'{e}vy motion} (ofLm), a broad class of generally non-Gaussian stochastic processes that are cov.o.s.s., have wide-sense stationary increments (namely, the mean and covariance of the increments do not change with time) and display infinitely divisible (ID) marginal distributions. The ofLm class subsumes, among others, the univariate fractional Brownian and L\'evy motions (fBm and fLm, respectively), as well as the multivariate operator fractional Brownian motion (ofBm). The ofLm class can be divided into two types, namely, moving average (maofLm) and real harmonizable (rhofLm), both of which share the covariance structure of ofBm, under assumptions. We show that both maofLm and rhofLm admit stochastic integral representations in the time and Fourier domains, and establish their distinct small- and large-scale limiting behaviors. We characterize time reversibility for ofLm, starting from a framework for the uniqueness of finite second moment, multivariate stochastic integral representations with respect to ID random measures. In particular, we show that, under non-Gaussianity, the parametric conditions for time reversibility are more restrictive than those arising in the Gaussian case (ofBm) when the models are comparable.

The concept of self-similarity provides a mathematical underpinning for the modeling of \textit{scale invariance} in a wide range of natural and social systems such as in critical phenomena (Sornette \cite{sornette:2006}), dendrochronology (Bai and Taqqu~\cite{bai:taqqu:2018}), stock market prices (Willinger et al.\ \cite{willinger:taqqu:teverovsky:1999}) and turbulence (Kolmogorov~\cite{Kolmogorovturbulence}). A univariate stochastic process $X$ is called \textit{self-similar} (s.s.) if it exhibits the scaling property
\begin{equation}\label{e:def_ss}
\{X(ct)\}_{t\in\R} \stackrel {\textnormal{f.d.d.}}{=} \{c^H X(t)\}_{t\in\R}, \quad c>0,
\end{equation}
for some scalar parameter $H\in (0,1]$, where $\stackrel{\textnormal{f.d.d.}}{=}$ denotes the equality of finite-dimensional distributions. An example of a s.s.\ process is the celebrated fBm (Mandelbrot and Van Ness \cite{mandelbrot:vanness:1968}, Embrechts and Maejima \cite{embrechts:maejima:2002}, Pipiras and Taqqu \cite{pipiras:taqqu:2017}).

On the other hand, new technological developments have ushered in the modern era of ``Big Data" (Brody \cite{brody:2011}). Many systems nowadays are monitored by several low-cost sensors and recording devices, leading to the storage of hundreds to several tens of thousands of time series. In \textit{multivariate} or \textit{high-dimensional} data, scaling behavior does not always appear along standard coordinate axes, and often involves multiple scaling relations. This situation is encountered in many applications such as in climate studies (Isotta et al.\ \cite{isotta:etal:2014}), hydrology (Benson et al.\ \cite{benson:baeumer:scheffler:2006}), finance (Meerschaert and Scalas \cite{meerschaert:scalas:2006}), neuroscience (Ciuciu et al.\ \cite{ciuciu:varoquaux:abry:sadaghiani:kleinschmidt:2012}) and network traffic (Abry and Didier \cite{abry:didier:2018:n-variate}).

A multivariate stochastic process $X$ is called \textit{operator self-similar} (o.s.s.) if it satisfies relation \eqref{e:def_ss} for some Hurst matrix $H$ whose eigenvalues have real parts lying in the interval $(0,1]$ (Laha and Rohatgi \cite{laha:rohatgi:1981}, Hudson and Mason \cite{hudson:mason:1982}). A canonical model for multivariate fractional systems is ofBm, namely, a Gaussian, o.s.s., stationary-increment stochastic process (Maejima and Mason \cite{maejima:mason:1994}, Mason and Xiao \cite{mason:xiao:2002}, Didier and Pipiras \cite{didier:pipiras:2012}). However, \textit{non-Gaussian} behavior is pervasive in a myriad of natural phenomena and artificial systems. This includes features such as burstiness or heavy tails (Leland et al.\ \cite{leland:taqqu:willinger:wilson:1993}, Paxson and Floyd \cite{paxson:floyd:1995}, Willinger et al.\ \cite{willinger:govindan:jamin:paxson:shenker:2002}, Boniece et al.\ \cite{boniece:didier:sabzikar:2020}). Among non-Gaussian scale invariant constructs, the mathematical generality and richness of fractional L\'{e}vy-type processes such as fLm have inspired a large body of work (Brockwell and Marquardt \cite{brockwell:marquart:2005}, Marquardt \cite{marquardt:2006}, Lacaux and Loubes \cite{lacaux:loubes:2007}, Bender and Marquardt \cite{bender:marquardt:2008}, Basse and Pedersen \cite{basse:pedersen:2009}, Tikanm{\"a}ki and Mishura~\cite{tikanmaki:mishura:2011}). Fractional L\'{e}vy-type processes have also become popular in physical applications since they provide a broad family of second order models displaying fractional covariance structure (Barndorff-Nielsen and Schmiegel \cite{barndorff-nielsen:schmiegel:2008}, Suciu \cite{suciu:2010}, Magdziarz and Weron \cite{magdziarz:weron:2011}, Zhang et al.\ \cite{zhang:li:zhang:2015}, Xu et al.\ \cite{xu:li:zhang:li:kurths:2016}). While of great importance in applications, the theory of their \textit{multivariate} counterparts is a topic that has been relatively little explored in the literature (e.g., Marquardt \cite{marquardt:2007}, Barndorff-Nielsen and Stelzer \cite{barndorff-nielsen:stelzer:2011}, Moser and Stelzer \cite{moser:stelzer:2013}).

In this paper, we mathematically construct a broad class of (multivariate) cov.o.s.s., wide-sense stationary-increment, stochastic processes with ID marginal distributions called \textit{operator fractional L\'{e}vy motion} (ofLm). It comprises two subclasses of stochastic processes, framed in the time and frequency (Fourier) domains. In the latter, real harmonizable ofLm (rhofLm) is defined by means of a stochastic integral of the form
\begin{equation}\label{e:ofLm_harm}
\{\widetilde{X}_H(t)\}_{t \in \bbR} \stackrel{\textnormal{f.d.d.}}= \Big\{\int_{\bbR} \Big(\frac{e^{\imag t x}-1}{\imag x} \Big) \big\{x^{H-(1/2)I}_+ A + x^{H-(1/2)I}_- \overline{A}\big\} \widetilde{{\mathcal M}}(dx) \Big\}_{t \in \bbR}
\end{equation}
for some complex matrix $A$, where $\widetilde{{\mathcal M}}(dx)$ is a $\bbC^p$-valued ID random measure. In the time domain, under mild constraints, moving average ofLm (maofLm) admits the stochastic integral representation
$$
\{X_H(t)\}_{t \in \bbR} \stackrel{\textnormal{f.d.d.}}= \Big\{\int_{\bbR} \big[\big\{(t-s)^{H-(1/2)I}_+ - (-s)^{H-(1/2)I}_+\big\} M_+
$$
\begin{equation}\label{e:ofLm_MA}
+ \big\{(t-s)^{H-(1/2)I}_- - (-s)^{H-(1/2)I}_-\big\} M_- \big] {\mathcal M}(ds) \Big\}_{t \in \bbR}
\end{equation}
for real matrices $M_+$, $M_-$, where ${\mathcal M}(ds)$ is a $\bbR^p$-valued ID random measure. In particular, when the random measures are Gaussian, \eqref{e:ofLm_harm} and \eqref{e:ofLm_MA} provide representations of the same stochastic process, namely, ofBm (Didier and Pipiras \cite{didier:pipiras:2011}). The random measures can be induced by multivariate L\'{e}vy processes (independent and stationary increment processes), in which case they generalize Cram\'{e}r-Wold representations based on Brownian noise (e.g., Doob \cite{doob:1953}, Rozanov \cite{rozanov:1967}).

OfLm was first considered as a model, without proofs, in Boniece, Didier et al.\ \cite{boniece:didier:wendt:abry:2019:eusipco} and Boniece, Wendt et al.\ \cite{boniece:wendt:didier:abry:2019:camsap}. In this paper, we broadly define ofLm and mathematically establish its fundamental properties such as finite-dimensional distributions and sample path behavior (Theorem \ref{t:rhofLm_maofLm_integ_repres_chf}). In particular, ofLm provides a flexible theoretical framework for the study of the effects of departures from non-Gaussianity in multivariate fractional constructs while keeping finite second moments. This can be seen, for instance, in natural alternative stochastic integral representations in the time and Fourier domains (Proposition \ref{p:XH_harm_YH_ma}; cf.\ Marquardt and Stelzer \cite{marquardt:stelzer:2007} on CARMA processes). Moreover, the study of scaling behavior lays bare some of the striking differences from the Gaussian case (cf.\ Benassi et al.\ \cite{benassi:cohen:istas:2002,benassi:cohen:istas:2004} on scalar random fields). On the one hand, non-Gaussian ofLm is shown to never be o.s.s. On the other hand, rhofLm and maofLm approach ofBm at short and long time scales, respectively (see Proposition \ref{p:maofLm->BH(t)_rhofLm->BH(t)}). In addition, for certain choices of ID random measure (L\'{e}vy noise), rhofLm and maofLm approach o.s.s., operator-stable processes at long and short time scales, respectively (see Proposition \ref{p:lass2}).

Recall that a stochastic process $X = \{X(t)\}_{t \in \bbR}$ is said to be \textit{time-reversible} if
\begin{equation}\label{e:X_is_time_revers}
\{X(t)\}_{t \in \bbR}\stackrel{\textnormal{f.d.d.}}=\{X(-t)\}_{t \in \bbR}.
\end{equation}
Equivalently, $\{\pm 1\}$ are domain symmetries of $X$ (Didier et al.\ \cite{didier:meerschaert:pipiras:2018}). All univariate, Gaussian stationary or stationary-increment stochastic processes are time-reversible. More generally, in the univariate context, confirmation of time irreversibility is relevant in both theory and modeling because it can be viewed, for example, as evidence of either non-Gaussianity or nonlinearity (see Weiss \cite{weiss:1975}, Cox \cite{cox:1981}, Section 3, Cheng \cite{cheng:1999}, and De Gooijer \cite{de_gooijer:2017}, p.\ 315; see also Jacod and Protter \cite{jacod:protter:1988}, Cox \cite{cox:1991} and Rosenblatt \cite{rosenblatt:2000}, chapter 1). In particular, time reversibility is well known to be a topic of central importance in Physics (e.g., Ku{\'s}mierz et al.\ \cite{kusmierz:chechkin:gudowska-nowak:bier:2016}). For an ofBm $B_H$ -- a multivariate, stationary-increment Gaussian process --, time reversibility is equivalent to the availability of the classical and convenient fBm-like covariance formula
\begin{equation}\label{e:time_revers_ofBm}
\bbE B_{H}(s)B_{H}(t)^* = \frac{1}{2} \big\{|s|^{H}\Sigma|s|^{H^*}+|t|^{H}\Sigma|t|^{H^*} - |t-s|^{H}\Sigma|t-s|^{H^*}\big\}, \quad s,t \in \bbR,
\end{equation}
where $\Sigma = \bbE B_{H}(1)B_{H}(1)^*$ (Didier and Pipiras \cite{didier:pipiras:2011}, Proposition 5.2). In this paper, we provide parametric characterizations of time reversibility for maofLm and rhofLm (Theorems \ref{t:maofLm_time-reversibility} and \ref{t:rhofLm_time-revers_general}). In particular, the results show that, under regularity assumptions, time reversibility for ofLm requires parametric conditions that are strictly stronger than those for ofBm (see Examples \ref{ex:time_revers_ofBm_vs_maofLm} and \ref{ex:time_revers_ofBm_vs_rhofLm}). Characterizing time reversibility involves starting from expressions of the form \eqref{e:ofLm_harm} and \eqref{e:ofLm_MA} and arriving at statements about integrands. In turn, this calls for results on the uniqueness of ID stochastic integrals that replace classical covariance Fourier inversion-type results for the Gaussian case (as in Didier and Pipiras \cite{didier:pipiras:2011}). For this purpose, we draw upon the seminal work of Kabluchko and Stoev \cite{kabluchko:stoev:2016} (see also Maruyama \cite{maruyama:1970}, Rajput and Rosi\'{n}ski \cite{rajput:rosinski:1989}, Rosi\'{n}ski \cite{rosinski:1989}) to analyze the uniqueness of finite second moment, multivariate stochastic integral representations with respect to compensated Poisson random measures.

The paper is organized as follows. In Section \ref{s:preliminaries}, we lay out a mathematical setting for multivariate stochastic integrals with respect to finite second moment, compensated Poisson random measures in both time and Fourier domains. In Section \ref{s:fLm}, we use the framework of Section \ref{s:preliminaries} to construct rhofLm and maofLm, and establish their essential distributional, sample path and scaling properties. In Section \ref{s:time_revers}, we characterize time reversibility for maofLm and rhofLm. All proofs, as well as auxiliary concepts and results, can be found in the Appendix.

\section{Preliminaries}\label{s:preliminaries}

Let $M(p,q,\bbR)$ and $M(p,q,\bbC)$ be, respectively, the spaces of $\bbR$-- and $\bbC$--valued $p \times q$ matrices, $p,q \in \bbN$, and let $M(p,\bbR) = M(p,p,\bbR)$ and $M(p,\bbC) = M(p,p,\bbC)$. Also, let $GL(p,\bbR)$ and $GL(p,\bbC)$ denote the corresponding groups of nonsingular matrices on the fields $\bbR$ and $\bbC$, respectively. For $M \in M(p,\bbC)$, $\textnormal{eig}(M)$ denotes the set of possibly repeated eigenvalues (characteristic roots) of $M$, and $\Re \hspace{1mm}\textnormal{eig}(M)$ denotes the set of their (possibly repeated) real parts.  Whenever convenient, given $M\in M(p,\C)$, we write $\lambda_i(M)$, $i=1,\ldots, p$ for the (possibly repeated) eigenvalues of $M$, indexed by the ordering $\Re \lambda_1(M)\leq\ldots\leq \Re \lambda_p(M)$. The symbol $I$ denotes the identity matrix, and $\textnormal{diag}(d_1,\hdots,d_p)$ represents a diagonal matrix with main diagonal entries $d_1,\hdots,d_p \in \bbC$. The symbol $\|\cdot\|$ denotes the Euclidean norm of a vector or the corresponding operator norm for a matrix. In the latter case, for a square matrix $M$, $\|M\|^2$ is given by the largest eigenvalue of $M^*M$ or $MM^*$. %

\subsection{Stochastic integrals}\label{s:stoch_integrals}

In this section, we use compensated Poisson random measures associated with finite second moment L\'{e}vy measures to describe a framework for stochastic integration. This framework provides a multivariate generalization of the ones in Benassi et al.\ \cite{benassi:cohen:istas:2002,benassi:cohen:istas:2004} and Marquardt \cite{marquardt:2006} (see also Marquardt \cite{marquardt:2007}). The ultimate goal is to construct moving average and harmonizable classes of fractional stochastic processes (Section \ref{s:fLm}), so we consider stochastic integration in both frequency (Fourier) and time domains. In this section, we provide the definitions and expressions that are essential in the construction of ofLm. The Poisson random measures considered herein can be viewed as stemming from the jump measure of a L\'evy process (see, e.g., Sato \cite{sato:1999}, Chapter 4). More properties of stochastic integrals can be found in Section \ref{s:properties_stoch_integrals}.

In the proposed framework, the differences between integration in the Fourier and time domains lie in the Poisson random measure domain ($\bbC^p$ or $\bbR^p$, respectively) and in the classes of integrands considered. In the former case, we mainly consider Hermitian integrands, as to ensure $\bbR^p$--valued stochastic integrals.

We first consider the Fourier domain. So, let $\mu_{\bbC^p}(d{\mathbf z}) \equiv \mu(d{\mathbf z})$ be a L\'{e}vy measure on ${\mathcal B}(\bbC^p)$ satisfying
\begin{equation}\label{e:int_|z|^2_mu(dz)<infty}
\int_{\bbC^p} {\mathbf z}^* {\mathbf z} \hspace{1mm}\mu(d{\mathbf z}) < \infty, \quad \mu(\{{\mathbf 0}\}) = 0.
\end{equation}

\begin{example}
Let $\mu(d{\mathbf z})=c\nu(d{\mathbf z})$, where $c>0$ and $\nu(d{\mathbf z})$ is any probability measure on $\C^p$ with finite second moments satisfying $\nu(\{\mathbf 0\}) = 0$. Then, \eqref{e:int_|z|^2_mu(dz)<infty} is satisfied. %
\end{example}

\begin{example}
Let $\alpha\in(0,2)$. For some $c>0$, define $\mu(d\mathbf z)= e^{-c\|\mathbf z\|}\big/\|\mathbf z\|^{1+\alpha} d\mathbf z$.
Then, \eqref{e:int_|z|^2_mu(dz)<infty} is satisfied (the measure $\mu$ is an instance of a \textit{tempered stable distribution}; see Rosi\'{n}ski \cite{rosinski:2007} or Grabchak \cite{grabchak:2016}).
\end{example}
Now, for $\mu(d{\mathbf z})$ as in \eqref{e:int_|z|^2_mu(dz)<infty}, let
\begin{equation}\label{e:Ntilde(dxi,dz)}
\widetilde{N}(dx,d{\mathbf z}) = N(dx,d{\mathbf z})- \bbE N(dx,d{\mathbf z})= N(dx,d{\mathbf z}) -  \hspace{0.5mm}dx \mu(d{\mathbf z}) \in \bbR
\end{equation}
be a compensated Poisson random measure on ${\mathcal B}(\bbR \times \bbC^p)$ (see, for example, Sato \cite{sato:1999}, Section 19, or Applebaum \cite{applebaum:2009}, Section 2.3). %
We define the space of integration kernels
$$
{\mathcal L}^2_{dx\otimes \mu(d{\mathbf z})}\equiv {\mathcal L}^2_{dx\otimes \mu_{\bbC^p}(d{\mathbf z})}\equiv {\mathcal L}^2\Big(\bbR \times \bbC^p, {\mathcal B}(\bbR \times \bbC^p),dx\otimes \mu_{\bbC^p}(d{\mathbf z})\Big)
$$
$$
= \Big\{\text{measurable }\varphi:\bbR \times \bbC^p \rightarrow \bbC^p :  \|\varphi\|_{{\mathcal L}^2_{dx\otimes \mu_{\bbC^p}(d{\mathbf z})}}   < \infty\Big\},
$$
where
\begin{equation}\label{e:|varphi|_{L^2}}
\|\varphi\|^2_{{\mathcal L}^2_{dx\otimes \mu_{\bbC^p}(d{\mathbf z})}} \equiv \|\varphi\|^2_{{\mathcal L}^2_{dx\otimes \mu(d{\mathbf z})}} := \int_{\bbR}\int_{\bbC^p} \varphi(x,{\mathbf z})^* \varphi(x,{\mathbf z}) \hspace{1mm}\mu(d{\mathbf z})dx.
\end{equation}
Fix the sets $B_{1,i} \times B_{2,i} \in {\mathcal B}(\bbR) \otimes {\mathcal B}(\bbC^p)$, as well as the vectors $\varphi_i \in \bbC^p$, $i = 1,\hdots, I$. Consider the elementary function $\varphi(x,{\mathbf z}) = \sum^{I}_{i=1} \varphi_{i}1_{B_{1,i} \times B_{2,i}}(x,{\mathbf z})$. We define the stochastic integral of the elementary function $\varphi$ with respect to the random measure $\widetilde{N}$ by means of the expression
$$
\int_{\bbR \times \bbC^p} \varphi(x,{\mathbf z}) \widetilde{N}(dx,d{\mathbf z}) := \sum^{I}_{i=1} \varphi_{i} \widetilde{N}(B_{1,i},B_{2,i}).
$$
Next, fix $\varphi \in {\mathcal L}^2_{dx\otimes \mu(d{\mathbf z})}$, and let $\{\varphi_{n}\}_{n \in \bbN} \subseteq {\mathcal L}^2_{dx\otimes \mu(d{\mathbf z})}$ be elementary functions converging to $\varphi$ in the norm $\|\cdot\|_{{\mathcal L}^2_{dx\otimes \mu(d{\mathbf z})}}$. Then,
\begin{equation}\label{e:int_varphi_N}
\bbC^p \ni \int_{\bbR \times \bbC^p} \varphi(x,{\mathbf z}) \widetilde{N}(dx,d{\mathbf z}) = L^2(\bbP)\textnormal{--}\lim_{n \rightarrow \infty} \int_{\bbR \times \bbC^p}\varphi_{n}(x,{\mathbf z})\widetilde{N}(dx,d{\mathbf z})
\end{equation}
is well defined as the \textit{stochastic integral of the function $\varphi$ with respect to the compensated Poisson random measure $\widetilde{N}$}  (see Section \ref{s:properties_stoch_integrals} for details). In particular, the limit random vector does not depend on the chosen sequence of elementary functions. Now let
\begin{equation}\label{e:L^2(R,M(p,C))}
\widetilde{g} \in L^2\big(\bbR,M(p,\bbC)\big)= \Big\{ \text{measurable } {\mathfrak g}:\bbR \rightarrow M(p,\bbC):  \int_{\bbR} \tr\big( {\mathfrak g}(x) {\mathfrak g}(x)^*  \big) dx < \infty \Big\}.
\end{equation}
We also define the random measure $\widetilde{{\mathcal M}}(dx)$ on ${\mathcal B}(\bbR)$ by means of the relation
\begin{equation}\label{e:int_f(xi)M(dxi)_def}
\int_{\bbR} \widetilde{g}(x) \widetilde{{\mathcal M}}(dx) := \int_{\bbR \times \bbC^p} \{\widetilde{g}(x){\mathbf z} + \widetilde{g}(-x)\overline{{\mathbf z}}\}\widetilde{N}(dx,d{\mathbf z}) \in \bbC^p.
\end{equation}
In particular, for
\begin{equation}\label{e:L^2_Herm}
\widetilde{g} \in L^2_{\textnormal{Herm}}(\bbR) = \Big\{{\mathfrak g} \in L^2\big(\bbR,M(p,\bbC)\big): \hspace{1mm} {\mathfrak g}(-x) = \overline{{\mathfrak g}(x)}\Big\},
\end{equation}
expression \eqref{e:int_f(xi)M(dxi)_def} reduces to
\begin{equation}\label{e:int_f(xi)M(dxi)}
\int_{\bbR} \widetilde{g}(x) \widetilde{{\mathcal M}}(dx) = \int_{\bbR \times \bbC^p} 2 \Re(\widetilde{g}(x){\mathbf z}) \widetilde{N}(dx,d{\mathbf z}) = 2 \Re\Big( \int_{\bbR \times \bbC^p} \widetilde{g}(x) {\mathbf z}  \widetilde{N}(dx,d{\mathbf z}) \Big)  \in \bbR^p.
\end{equation}

So, let $\{\widetilde{g}_t\}_{t \in \bbR} \subseteq L^2_{\textnormal{Herm}}(\bbR)$. We can define the stochastic process $\widetilde{X} =\{\widetilde{X}(t)\}_{t \in \bbR}$ by means of the stochastic integral
\begin{equation}\label{e:Xtilde(t)=int_ft(omega)M(domega)}
\widetilde{X}(t) = \int_{\bbR}\widetilde{g}_{t}(x)\widetilde{{\mathcal M}}(dx), \quad t \in \bbR.
\end{equation}
Equivalently, based on relation \eqref{e:int_f(xi)M(dxi)}, we can reexpress $\widetilde{X}$ as
\begin{equation}\label{e:X-tilde(t)}
\widetilde{X}(t) = \int_{\bbR \times \bbC^p} 2 \Re(\widetilde{g}_{t}(x){\mathbf z})\widetilde{N}(dx,d{\mathbf z})
= 2 \Re\Big(\int_{\bbR \times \bbC^p} \widetilde{g}_{t}(x){\mathbf z}\widetilde{N}(dx,d{\mathbf z})  \Big).
\end{equation}

To construct the analogous time domain framework, we start with the following definition. As in \eqref{e:Ntilde(dxi,dz)}, we consider the compensated Poisson random measure
\begin{equation}\label{e:N(ds,dz)_real_z}
\widetilde{N}(ds,d{\mathbf z}) = N(ds,d{\mathbf z})- \bbE N(ds,d{\mathbf z})= N(ds,d{\mathbf z}) -   ds\mu(d{\mathbf z}) \in \bbR
\end{equation}
on ${\mathcal B}(\bbR^{p+1})$, where $\mu(d{\mathbf z})$ is a L\'{e}vy measure {on ${\mathcal B}(\bbR^p)$ and satisfying
\begin{equation}\label{e:int_|z|^2_mu(dz)<infty_in_R}
\int_{\bbR^p} {\mathbf z}^* {\mathbf z} \hspace{1mm}\mu(d{\mathbf z}) < \infty, \quad \mu(\{{\mathbf 0}\}) = 0.
\end{equation}
We naturally define the space of integration kernels ${\mathcal L}^2_{ds\otimes \mu(d{\mathbf z})}$ as in \eqref{e:|varphi|_{L^2}}, where $\bbR^{p+1}$ replaces $\bbR \times \bbC^p$, $\mu(d{\mathbf z})$ is a L\'{e}vy measure on ${\mathcal B}(\bbR^{p})$, and $\|\varphi\|^2_{{\mathcal L}^2_{ds\otimes \mu(d{\mathbf z})}}$ is defined as in \eqref{e:|varphi|_{L^2}} with $\bbR^{p+1}$ replacing $\bbR \times \bbC^p$. Let $\{\varphi_{n}\}_{n \in \bbN} \subseteq {\mathcal L}^2_{ds\otimes \mu(d{\mathbf z})}$ be elementary functions converging to $\varphi$ in the norm $\|\cdot\|_{{\mathcal L}^2_{ds\otimes \mu(d{\mathbf z})}}$. The stochastic integral
\begin{equation}\label{e:int_varphi_N_time}
\int_{\bbR^{p+1}} \varphi(s,{\mathbf z}) \widetilde{N}(ds,d{\mathbf z}) \in \bbR^p, \quad \varphi \in {\mathcal L}^2_{ds \otimes \mu(d{\mathbf z})},
\end{equation}
is then naturally defined as in \eqref{e:int_varphi_N}.

We further define the random measure ${\mathcal M}(ds)$ by means of the relation
\begin{equation}\label{e:int_f(s)M(ds)}
\int_{\bbR} g(s) {\mathcal M}(ds) := \int_{\bbR\times \R^p} g(s){\mathbf z}  \widetilde{N}(ds,d{\mathbf z})  \in \bbR^p,
\end{equation}
where
\begin{equation}\label{e:L^2(R,M(p,R))}
g \in L^2\big(\bbR,M(p,\bbR)\big)= \Big\{ \text{measurable } {\mathfrak g}:\bbR \rightarrow M(p,\bbR):  \int_{\bbR} \tr\big( {\mathfrak g}(s) {\mathfrak g}(s)^* \big) ds  < \infty \Big\}
\end{equation}
(in particular, $g(s){\mathbf z} \in {\mathcal L}^2_{ds \otimes \mu(d{\mathbf z})}$). Note that the space of integrands for the random measure ${\mathcal M}(ds)$ (i.e., \eqref{e:L^2(R,M(p,R))}) is different from that for $\widetilde{{\mathcal M}}(d x)$ (i.e., \eqref{e:L^2(R,M(p,C))}).

Analogously to expression \eqref{e:Xtilde(t)=int_ft(omega)M(domega)}, for $\{g_t(s)\}_{t \in \bbR} \subseteq L^2\big(\bbR,M(p,\bbR)\big)$, we can define the stochastic process $X = \{X(t)\}_{t \in \bbR}$ by means of the stochastic integral
\begin{equation*}\label{e:Y=int_f(t,s)L(ds)}
X(t) = \int_{\bbR} g_t(s) {\mathcal M}(ds).
\end{equation*}
Equivalently, based on relation \eqref{e:int_f(s)M(ds)}, we can reexpress $X$ as
\begin{equation}\label{e:Y=int_f(t,s)L(ds)_change}
\{X(t)\}_{t \in \bbR} \stackrel{\textnormal{f.d.d.}}= \Big\{\int_{\bbR\times \bbR^p} g_t(s) {\mathbf z} \widetilde{N}(ds,d{\mathbf z})\Big\}_{t \in \bbR}.
\end{equation}

\subsection{On integral representations of operator fractional Brownian motion}\label{s:ofBm_integ_repres}

Recall that an ofBm is a Gaussian, o.s.s., stationary-increment stochastic process. Harmonizable representations are the natural starting point for the study of ofBm. This is so because, as briefly recalled in the Introduction, almost every instance of ofBm admits the representation \eqref{e:ofLm_harm}, where $\widetilde{{\mathcal M}}(dx)=\widetilde B(dx)$ is a $p$-variate complex Gaussian random measure satisfying $\widetilde B(-dx)= \overline{\widetilde B(dx)}$ a.s. and $\E\widetilde B(dx)\widetilde B(dx)^* = dx \times I$ (see Example \ref{ex:b-tilde}). So, for notational simplicity, define
\begin{equation}\label{e:D=H-(1/2)I}
D = H - (1/2) I.
\end{equation}
Let
\begin{equation}\label{e:fLm_filter_harm}
L^2_{\textnormal{Herm}}(\bbR) \ni \widetilde{g}_t(x) = \frac{e^{\imag t x}-1}{\imag x} \hspace{0.5mm}\big\{x^{-D}_{+}A  + x^{-D}_{-}\overline{A} \big\}, \quad x \neq 0,
\end{equation}
be the integrand of the harmonizable representation of ofBm. Defining Fourier transforms entry-wise, for
\begin{equation}\label{e:ft(s)=F^(-)(gt(s))}
g_t(s) := {\mathcal F}^{-1}(\widetilde{g}_t)(s) \in L^2\big(\bbR,M(p,\bbR) \big),
\end{equation}
ofBm also admits a moving average representation of the form \eqref{e:ofLm_MA}, where the Gaussian random measure ${\mathcal M}(ds) = B(ds)$ satisfies $\bbE B(ds)B(ds)^* = ds \times I$. For most cases of interest, we can explicitly recast the moving average representation of ofBm. In fact, we can set
\begin{equation}\label{e:fLm_filter}
g_t(s) = \left\{\begin{array}{cc}
\big\{(t-s)^{D}_+ - (-s)^{D}_+ \big\}M_+  + \big\{(t-s)^{D}_- - (-s)^{D}_- \big\}M_-, & \textnormal{if }\Re\hspace{0.5mm}\textnormal{eig}(H) \subseteq (0,1)\backslash\{1/2\};\\
 & \\
\{\textnormal{sign}(t-s) - \textnormal{sign}(-s)\big\}M + \log \Big( \frac{|t-s|}{|s|}\Big)N, & \textnormal{if }H = (1/2)I,
\end{array}\right.
\end{equation}
for some matrix constants $M_+, M_- \in M(p,\bbR)$ or $M, N \in M(p,\bbR)$ (Didier and Pipiras \cite{didier:pipiras:2011}, Theorem 3.2). For the instances
\begin{equation}\label{e:Re_eig(H)=(0,1)-(1/2)}
\Re\hspace{0.5mm}\textnormal{eig}(H) \subseteq (0,1)\backslash\{1/2\},
\end{equation}
expression \eqref{e:fLm_filter} can be extracted based on the fact that
$$
{\mathcal F}\big( (t-s)^{D}_{\pm}-(-s)^{D}_{\pm} \big)(x) := \int_{\bbR}e^{\imag s x}\big\{ (t-s)^{D}_{\pm}-(-s)^{D}_{\pm} \big\} ds
$$
\begin{equation}\label{e:Prop3.1_DP2011}
= \frac{e^{\imag t x}-1}{\imag x} |x|^{-D}\Gamma\big(D+I\big) e^{\mp \textnormal{sign}(x) \imag \pi D/2}
\end{equation}
(see Proposition 3.1 and Theorem 3.2 in Didier and Pipiras \cite{didier:pipiras:2011}, in particular, expressions (3.20), (3.24) and (3.25)). In \eqref{e:Prop3.1_DP2011}, $\Gamma\big(D+I\big)$ is interpreted as a primary matrix function (Horn and Johnson \cite{horn:johnson:1991}, Sections 6.1 and 6.2). Further note that, when
$$
\textnormal{eig}(H) \cap \big\{ z \in \bbC: \Re z = 1/2 \big\} \neq \emptyset \quad \textnormal{and} \quad H \neq (1/2)I,
$$
moving average representations can be quite intricate (see Example 3.1 in Didier and Pipiras \cite{didier:pipiras:2011}).

\section{Operator fractional L\'{e}vy motion}\label{s:fLm}

We are now in a position to define the ofLm class. For the sake of simplicity, hereinafter we focus on purely non-Gaussian constructs.
We first define ofLm in the Fourier and time domains, and then establish its fundamental properties.
\begin{definition}\label{def:rhofLm}
Let $H  \in M(p,\bbR)$ be a (Hurst) matrix whose eigenvalues satisfy
\begin{equation}\label{e:0<Re(h)<1}
\Re \hspace{0.5mm}\textnormal{eig}(H) \subseteq (0,1).
\end{equation}
Let $\widetilde{{\mathcal M}}(ds)$ be a $\bbC^p$-valued random measure as in \eqref{e:int_f(xi)M(dxi)} whose L\'{e}vy measure $\mu(d{\mathbf z})\equiv \mu_{\bbC^p}(d{\mathbf z})$ satisfies condition \eqref{e:int_|z|^2_mu(dz)<infty}. A \textit{real harmonizable operator fractional L\'evy motion} (rhofLm) without Gaussian component $\widetilde{X}_H = \{\widetilde{X}_H(t)\}_{t \in \bbR}$ is a stochastic process such that
\begin{itemize}
\item [$(i)$]
\begin{equation}\label{e:EX2-tildeH(t)>0}
\textnormal{the distribution of $\widetilde{X}_H(t)$ is proper}, \quad t \neq 0;
\end{equation}
\item [$(ii)$] it satisfies the relation
\begin{equation}\label{e:X-tildeH(t)}
\{\widetilde{X}_H(t)\}_{t \in \bbR} = \Big\{ \int_{\bbR}\widetilde{g}_t(x) \widetilde{{\mathcal M}}(dx) \Big\}_{t \in \bbR}.
\end{equation}
\end{itemize}
In \eqref{e:X-tildeH(t)}, the integrand is given by $\widetilde{g}_t(x)$ as in \eqref{e:fLm_filter_harm} for some matrix constant $A \in M(p,\bbC)$.
\end{definition}

We can now turn to the time domain.
\begin{definition}\label{def:maofLm}
Let $H  \in M(p,\bbR)$ be a (Hurst) matrix whose eigenvalues satisfy \eqref{e:0<Re(h)<1}. Also let ${\mathcal M}(ds)$ be a $\bbR^p$-valued random measure as in \eqref{e:int_f(s)M(ds)} whose L\'{e}vy measure $\mu(d{\mathbf z})$ satisfies \eqref{e:int_|z|^2_mu(dz)<infty_in_R}. A \textit{moving average operator fractional L\'evy motion} (maofLm) without Gaussian component $X_H = \{X_H(t)\}_{t \in \bbR}$ is a $\bbR^p$-valued stochastic process such that
\begin{itemize}
\item [$(i)$]
\begin{equation}\label{e:EX2H(t)>0_multivar}
\textnormal{the distribution of $X_H(t)$ is proper}, \quad t \neq 0;
\end{equation}
\item [$(ii)$] it satisfies the relation
\begin{equation}\label{e:XH(t)}
\{X_H(t)\}_{t \in \bbR} \stackrel{\textnormal{f.d.d.}}= \Big\{\int_{\bbR} g_t(s) \hspace{0.5mm}{\mathcal M}(ds)\Big\}_{t \in \bbR}.
\end{equation}
\end{itemize}
In \eqref{e:XH(t)}, the integrand is given by $g_t(s)$ as in \eqref{e:ft(s)=F^(-)(gt(s))}.
\end{definition}

\begin{remark} When $p=1$, maofLm reduces to the classical fLm (e.g., Marquardt \cite{marquardt:2006}).
\end{remark}

\begin{example}\label{ex:well-balanced_maofLm}
From \eqref{e:XH(t)} and \eqref{e:fLm_filter}, when $\Re \eig(H)\subseteq (0,1)\setminus\{1/2\}$, and $M_+ = M_- =: M$, maofLm admits the well-balanced representation
\begin{equation}\label{e:mafLm_well-balanced}
\{X_H(t)\}_{t \in \bbR} = \Big\{\int_{\bbR} \{\hspace{0.5mm}|t-s|^{D} - |-s|^{D}\hspace{0.5mm}\} M \hspace{0.5mm}  {\mathcal M}(ds)\Big\}_{t \in \bbR}
\end{equation}
(cf.\ Benassi et al.\ \cite{benassi:cohen:istas:2004}, Definition 2.1).
\end{example}

In the following proposition, we establish fundamental properties of both rhofLm and maofLm. Statement $(vii)$ pertains to sample path
properties, whereas all remaining statements pertain to existence, continuity and distributional properties.
\begin{theorem}\label{t:rhofLm_maofLm_integ_repres_chf}
Let $H \in M(p,\bbR)$ be a (Hurst) matrix satisfying \eqref{e:0<Re(h)<1}. Also let $\widetilde{X}_H = \{\widetilde{X}_H(t)\}_{t \in \bbR}$ be a rhofLm as in \eqref{e:X-tildeH(t)} and let $X_H = \{X_H(t)\}_{t \in \bbR}$ be a maofLm as in \eqref{e:XH(t)}. Then,
\begin{itemize}
\item [$(i)$] for any $t \in \bbR$, $\widetilde{X}_H(t)$ and $X_H(t)$ are well defined;
\item [$(ii)$] $\widetilde{X}_H$ and $X_H$ are stochastically continuous, namely, $\widetilde{X}_H(t) \stackrel{\bbP}\rightarrow \widetilde{X}_H(t_0)$ and $X_H(t) \stackrel{\bbP}\rightarrow X_H(t_0)$ whenever $t \rightarrow t_0 \in \bbR$. In particular, they have measurable modifications;
\item [$(iii)$] for any $m \in \bbN$, any ${\mathbf u}_1,\hdots,{\mathbf u}_m \in \bbR^p$ and any $t_1 < \hdots < t_m$, the characteristic function of the finite-dimensional distributions of $\widetilde{X}_H(t)$ is given by
$$
\bbE \exp \Big\{\imag \sum^{m}_{j=1}\langle {\mathbf u}_j ,\widetilde{X}_H(t_j) \rangle \Big\}
$$
\begin{equation}\label{e:rhofLm_chf}
= \exp \Big\{ \int_{\bbR} \int_{\bbC^p} \Big(e^{\imag \hspace{0.5mm}2 \sum^{m}_{j=1}{\mathbf u}^*_j \Re (\widetilde{g}_{t_j}(x){\mathbf z})}-1 -\imag \hspace{0.5mm}2 \sum^{m}_{j=1}{\mathbf u}^*_j \Re (\widetilde{g}_{t_j}(x){\mathbf z})\Big)\mu_{\bbC^p}(d{\mathbf z}) dx\Big\}
\end{equation}
$$
= \exp\Big\{ \int_{\bbR} \int_{\bbR^{2p}}\Big[e^{\imag \hspace{0.25mm}2 \hspace{0.25mm} \sum^{m}_{j=1}{\mathbf u}^*_j  \big(\Re\widetilde{g}_{t_j}(x) {\mathbf z}_1 - \Im \widetilde{g}_{t_j}(x) {\mathbf z}_2\big)}- 1 \hspace{2cm}
$$
\begin{equation}\label{e:chf_harm_gt}
\hspace{2cm}- \imag \hspace{0.25mm}2\hspace{0.25mm} \sum^{m}_{j=1}{\mathbf u}^*_j \big(\Re\widetilde{g}_{t_j}(x) {\mathbf z}_1 - \Im \widetilde{g}_{t_j}(x) {\mathbf z}_2\big)  \Big]\mu_{\bbR^{2p}}(d{\mathbf z}) dx  \Big\},
\end{equation}
where
\begin{equation}\label{e:mu_Cp=muR2p}
\mu_{\bbC^{p}} \equiv \mu_{\bbR^{2p}} \textnormal{ and } {\mathbf z} \equiv \begin{pmatrix}{\mathbf z}_1\\{\mathbf z}_2\end{pmatrix} \in \bbR^p \times \bbR^p.
\end{equation}
Moreover, the characteristic function of the finite-dimensional distributions of $X_H(t)$ is given by
$$
\bbE \exp \Big\{\imag \sum^{m}_{j=1}\langle {\mathbf u}_j, X_H(t_j)\rangle \Big\}
$$
\begin{equation}\label{e:mafLm_chf}
= \exp \Big\{ \int_{\bbR} \int_{\bbR^p}\Big(e^{\imag \sum^{m}_{j=1}{\mathbf u}^*_j g_{t_j}(s){\mathbf z}}-1 -\imag \sum^{m}_{j=1}{\mathbf u}^*_j g_{t_j}(s){\mathbf z}  \Big)\mu(d{\mathbf z}) ds\Big\}.
\end{equation}
In particular, $\widetilde{X}_H$ and $X_H$ have mean zero and are cov.o.s.s.;
\item [$(iv)$] 
If $\int_{\R} \mathbf z \mathbf z^* \mu(d\mathbf z)$ has full rank, $X_H$ has the same covariance function as an ofBm with Hurst matrix $QHQ^{-1}$ and parameter $QA$ in its harmonizable representation, where $Q=\big(\int_{\R} \mathbf z \mathbf z^* \mu(d\mathbf z)\big)^{1/2}$.  In particular,  if
\begin{equation}\label{e:int_zz*=I}
\int_{\bbR^p} {\mathbf z} {\mathbf z}^* \mu(d{\mathbf z}) = I
\end{equation}
and condition \eqref{e:Re_eig(H)=(0,1)-(1/2)} holds, then $\bbE X_H(s)X_H(t)^*$, $s,t,\in \bbR$, is the covariance function of an ofBm whose time domain representation has parameters $H$, $M_+$ and $M_-$ (see \eqref{e:fLm_filter}). Also, if $\int_{\bbC^{p}} \Re {\mathbf z} \Re {\mathbf z}^* \mu(d{\mathbf z})=\int_{\bbC^{p}} \Im {\mathbf z} \Im {\mathbf z}^* \mu(d{\mathbf z})$ has full rank, then $\widetilde X_H$ has the same covariance function as an ofBm with Hurst matrix $\widetilde QH\widetilde Q^{-1} $ and parameter $\widetilde QA$ in its harmonizable representation, where $\widetilde Q =\big(4 \int_{\bbC^{p}} \Re {\mathbf z} \Re {\mathbf z}^* \mu(d{\mathbf z})\big)^{1/2}$.  In particular, if
\begin{equation}\label{e:int_RezRez*=sigma^2_I=int_ImzImz*}
4\int_{\bbC^p} (\Re {\mathbf z} ) (\Re {\mathbf z})^* \mu(d{\mathbf z}) = I = 4\int_{\bbC^p} (\Im {\mathbf z}) (\Im {\mathbf z})^* \mu(d{\mathbf z}),
\end{equation}
then $\bbE \widetilde{X}_H(s)\widetilde{X}_H(t)^*$, $s,t,\in \bbR$, is the covariance function of an ofBm whose harmonizable representation has parameters $H$ and $A$ (see \eqref{e:fLm_filter_harm}).
\item [$(v)$] $X_H$ has strict-sense stationary increments and $\widetilde X_H$ has wide-sense stationary increments. If
\begin{equation}\label{e:Mtilde(dxi)=e^(i*theta)Mtilde(dxi)}
\mu_{\C^p}(d\mathbf z) = \mu_{\C^p}(e^{\imag \theta}d\mathbf z), \quad \theta \in [-\pi,\pi)
\end{equation}
(i.e., $\widetilde{{\mathcal M}}(d x) \stackrel{d}= e^{ \imag h x} \widetilde{{\mathcal M}}(d x)$, $h \in \bbR$), then $\widetilde{X}_H$ also has strict-sense stationary increments;
\item [$(vi)$] let $X_H$ be a maofLm whose Hurst matrix $H$ satisfies condition \eqref{e:Re_eig(H)=(0,1)-(1/2)}.
Then, $X_H$ is not o.s.s.  Also, let $\widetilde{X}_H$ be a rhofLm. Then, $\widetilde{X}_H$ is not  o.s.s.;
\end{itemize}
\begin{itemize}
\item [$(vii)$] suppose the additional constraint $\Re \hspace{0.5mm}\eig(H) \subseteq (1/2,1)$ is in place. Then, for every $\gamma\in (0, \min \Re\eig(H)-1/2)$, there exists a modification of maofLm/rhofLm that is a.s.\ $\gamma$-H\"older continuous.
\end{itemize}
\end{theorem}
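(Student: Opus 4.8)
The plan is to reduce the claim to the Kolmogorov--Chentsov continuity criterion applied at the level of second moments, which is all that is available since \eqref{e:int_|z|^2_mu(dz)<infty} and \eqref{e:int_|z|^2_mu(dz)<infty_in_R} only guarantee finite variances. The essential point is that the extra hypothesis $\Re\hspace{0.5mm}\eig(H)\subseteq(1/2,1)$ is \emph{exactly} what makes the second-moment ($\beta=2$) version of the criterion deliver H\"older exponents up to $\min\Re\eig(H)-1/2$; there is no room to trade moments for regularity, so the argument is sharp. I treat $X_H$ and $\widetilde X_H$ simultaneously, writing $Y$ for either one, since the only inputs I use---mean zero, finite second moments, cov.o.s.s.\ (part $(iii)$), and (at least wide-sense) stationary increments (part $(v)$)---hold for both.

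\emph{Step 1 (increment variance).} Because the integrands in \eqref{e:fLm_filter_harm} and \eqref{e:ft(s)=F^(-)(gt(s))} vanish at $t=0$, we have $Y(0)=0$ a.s., so wide-sense stationarity of the increments gives $\Cov(Y(t)-Y(s))=\Cov(Y(t-s))$. Applying \eqref{e:cov_oss} with $c=\abs{t-s}$ and the two base points $t=\pm1$, I would write, for $t\neq s$,
\[
\bbE\norm{Y(t)-Y(s)}^2=\tr\Cov\big(Y(t)-Y(s)\big)=\tr\Cov\big(Y(t-s)\big)=\tr\big(\abs{t-s}^{H}\,\Sigma_{\pm}\,\abs{t-s}^{H^*}\big),
\]
where $\Sigma_{+}=\Cov(Y(1))$ or $\Sigma_{-}=\Cov(Y(-1))$ is selected according to the sign of $t-s$.

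\emph{Step 2 (operator-norm estimate).} I would next invoke the standard spectral bound: for every $\delta<\min\Re\eig(H)$ there is a constant $C_\delta$ with $\norm{c^{H}}\le C_\delta\,c^{\delta}$ for all $c\in(0,1]$. This follows from the Jordan decomposition of $H$---the real parts of the eigenvalues fix the power of $c$, while the nilpotent blocks contribute only factors of $\abs{\log c}^{p-1}$, which are absorbed into $C_\delta$ since $c^{\min\Re\eig(H)-\delta}\abs{\log c}^{p-1}$ is bounded on $(0,1]$. Combining with Step 1 yields, for $\abs{t-s}\le1$,
\[
\bbE\norm{Y(t)-Y(s)}^2\le\norm{\abs{t-s}^{H}}^2\,\max\big(\tr\Sigma_{+},\tr\Sigma_{-}\big)\le C\,\abs{t-s}^{2\delta},\qquad\delta<\min\Re\eig(H).
\]

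\emph{Step 3 (Kolmogorov--Chentsov and gluing).} For a fixed $\gamma\in(0,\min\Re\eig(H)-1/2)$, I would choose $\delta\in(\gamma+\tfrac12,\min\Re\eig(H))$, an interval that is nonempty precisely because $\min\Re\eig(H)>1/2$. Then the bound of Step 2 reads $\bbE\norm{Y(t)-Y(s)}^2\le C\,\abs{t-s}^{1+(2\delta-1)}$ with $2\delta-1>0$, so on each compact interval $[-n,n]$ the Kolmogorov--Chentsov theorem furnishes a modification that is a.s.\ H\"older continuous of every order below $\delta-\tfrac12$, in particular of order $\gamma$. By stochastic continuity (part $(ii)$) these modifications agree a.s.\ on overlaps, so they glue into a single modification of $Y$ on $\bbR$ that is a.s.\ locally $\gamma$-H\"older continuous; letting $\gamma$ range over $(0,\min\Re\eig(H)-1/2)$ settles both maofLm and rhofLm. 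The main obstacle is Step 2 together with its interaction with the sign-dependent covariance of Step 1: one must control $\norm{c^{H}}$ uniformly as $c\downarrow0$ and keep the two constants $\tr\Sigma_{\pm}$ separate. Everything else is bookkeeping. I emphasize that $\Re\hspace{0.5mm}\eig(H)\subseteq(1/2,1)$ enters in a single, non-negotiable place---guaranteeing $2\delta>1$ so that the $\beta=2$ criterion applies---and that ``H\"older continuous'' on the unbounded index set $\bbR$ must be read in the local sense that Kolmogorov--Chentsov delivers.
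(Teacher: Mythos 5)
Your proposal proves only item $(vii)$ of the theorem. The statement under consideration is the full seven-part Theorem \ref{t:rhofLm_maofLm_integ_repres_chf}, and you explicitly import parts $(ii)$, $(iii)$ and $(v)$ as inputs (``mean zero, finite second moments, cov.o.s.s.\ (part $(iii)$), \ldots stationary increments (part $(v)$)''). Within the theorem that dependency order is legitimate --- the paper's own proof of $(vii)$ invokes statement $(v)$ in exactly the same way --- but as a blind proof of the statement it leaves six of the seven claims unestablished, and these contain the real work: the characteristic-function formulas \eqref{e:rhofLm_chf}--\eqref{e:mafLm_chf} derived from the ID integral formulas \eqref{e:chf_harm} and \eqref{e:exp(int_psi(f)ds)} together with the scaling identities \eqref{e:gtilde_ct(x)=c^(D+I)gtilde_t(x)}--\eqref{e:g_ct(cs)=c^D*g_t(s)}; the covariance identification with ofBm in $(iv)$ via the isometries \eqref{e:E_int_isometry_harm_f(xi)_cross}, \eqref{e:E_int_isometry} and Parseval--Plancherel; and above all $(vi)$, where the paper pushes the L\'{e}vy measure through the kernel to build an induced measure $\nu_*$, derives the scaling relation \eqref{e:cnu*(dy)=nu*(c^(-1/2I)dy)}, and invokes Hudson--Mason to force an operator-stable, finite-second-moment --- hence Gaussian --- limit, contradicting the absence of a Gaussian component. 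None of this is hinted at in the proposal, so as it stands there is a genuine gap: the attempt is a correct proof of one part, not of the theorem.

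For $(vii)$ itself, your argument is correct and is essentially the paper's: stationary (or wide-sense stationary) increments reduce $\bbE\|Y(t)-Y(s)\|^2$ to $\tr\big(r^{H}\Sigma\, r^{H^*}\big)$ with $r=|t-s|$, a Jordan-form estimate yields $r^{2\min \Re\hspace{0.5mm}\eig(H)}$ up to logarithmic corrections absorbed as $r^{-\varepsilon}$, and the Kolmogorov--$\breve{\textnormal{C}}$entsov theorem concludes; your window $\delta\in(\gamma+\tfrac12,\min\Re\hspace{0.5mm}\eig(H))$ matches the paper's range $\gamma\in(0,d_1)$ with $d_1=\min\Re\hspace{0.5mm}\eig(H)-\tfrac12$. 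The differences are cosmetic: the paper works with Frobenius norms and takes $0\le s<t\le 1$ without loss of generality, whereas you use the operator norm, track $\Sigma_{\pm}$ by the sign of $t-s$, and make explicit both the gluing of modifications over compacts (via part $(ii)$) and the observation that $Y(0)=0$ a.s.\ justifies $\Cov(Y(t)-Y(s))=\Cov(Y(t-s))$ under merely wide-sense stationary increments --- the latter being precisely what the paper leaves implicit when it says the statement ``also holds for rhofLm in view of its wide-sense stationary increments.'' To repair the proposal you would need to supply proofs of $(i)$--$(vi)$, in particular the Hudson--Mason contradiction argument for $(vi)$, which is the only part requiring an idea not already contained in your Steps 1--3.
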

\begin{example}
A simple example of a L\'{e}vy measure satisfying \eqref{e:int_RezRez*=sigma^2_I=int_ImzImz*} is given by
$$
\mu_{\bbC^p}(d{\mathbf z}) =  \sum^{p}_{k=1}\delta_{(1+\imag)e_k}(d{\mathbf z}),
$$
where $e_k \in \bbR^p$, $k = 1,\hdots,p$, are the first $p$ canonical vectors.
\end{example}

\begin{remark}\label{r:YHtilde_proper}
The properness condition notwithstanding, the integrands in the stochastic integral representations of ofLm can be rank-deficient a.e.\ (see Lemma \ref{l:Ytilde_proper}).
\end{remark}

\begin{remark}\label{r:identifiability} Under conditions, the second order structures of ofLm and ofBm are identical. Therefore, the parametrization of the second order structure of ofLm is not identifiable (Didier and Pipiras \cite{didier:pipiras:2012}). Characterizing the (non)identifiability of the parametrization of ofLm -- namely, in regard to its finite-dimensional distributions -- is a topic for future work.
\end{remark}

Recall that, in the Gaussian case (ofBm), harmonizable and moving average stochastic integrals are representations of the same stochastic process (see Section \ref{s:ofBm_integ_repres}). Equivalently, they have the same covariance structure. As established in Theorem \ref{t:rhofLm_maofLm_integ_repres_chf}, under assumptions on the L\'{e}vy measure, rhofLm and maofLm share the covariance structure of ofBm. Nevertheless, they are rather distinct from ofBm. We shed light on such differences in the next three propositions. In Proposition \ref{p:XH_harm_YH_ma}, we provide natural alternative stochastic integral representations of rhofLm and maofLm in the time and Fourier domains, respectively. The representations (i.e., \eqref{e:XH_harm} and \eqref{e:YH_ma}) are formally similar to \eqref{e:X-tildeH(t)} and \eqref{e:XH(t)}, respectively. However, the random measures involved in each expression do \textit{not} satisfy the conditions stated in Definitions \ref{def:rhofLm} and \ref{def:maofLm}. In particular, the random measures generally display \textit{orthogonal} but \textit{dependent} increments. On the other hand, even though ofLm is never o.s.s., in Proposition \ref{p:maofLm->BH(t)_rhofLm->BH(t)} we establish that (rescaled) rhofLm and maofLm converge to an ofBm over different time ranges, i.e., in the large and small scaling limits, respectively. Remarkably, in Proposition \ref{p:lass2} we further show that maofLm and rhofLm may display operator self-similarity in the other limit directions, namely, maofLm can be operator self-similar in the \emph{small} scale limit, whereas rhofLm can be operator self-similar in the \emph{large} scale limit. However, such limits may display heavy-tailed marginal distributions, i.e., they are not ofBms.

We begin by establishing natural alternative stochastic integral representations of rhofLm and maofLm. These representations are based on random measures with uncorrelated increments; for the reader's convenience they are given explicitly in Proposition \ref{p:spec}.
\begin{proposition}\label{p:XH_harm_YH_ma}
Fix a matrix $H \in M(p,\bbR)$ whose eigenvalues satisfy \eqref{e:Re_eig(H)=(0,1)-(1/2)}. Let $\widetilde{X}_H = \{\widetilde{X}_H(t)\}_{t \in \bbR}$ be a rhofLm as in \eqref{e:X-tildeH(t)} under conditions \eqref{e:int_RezRez*=sigma^2_I=int_ImzImz*} and \eqref{e:Mtilde(dxi)=e^(i*theta)Mtilde(dxi)} on the associated random measure. Also let $X_H = \{X_H(t)\}_{t \in \bbR}$ be a maofLm as in \eqref{e:XH(t)} under condition \eqref{e:int_zz*=I} on the associated random measure.

\begin{itemize}
\item[$(i)$] Then, for $\widetilde{g}_t$ as in \eqref{e:fLm_filter_harm}, $X_H$ admits the representation
\begin{equation}\label{e:XH_harm}
{\{X_H(t)\}_{t \in \bbR} \stackrel{\textnormal{f.d.d.}}= \Big\{\int_\R  \widetilde{g}_t(x) \hspace{0.5mm}\Phi_{\mathcal M}(dx) \Big\}_{t \in \bbR},}
\end{equation}
where $\Phi_{\mathcal M}(dx)$ is a $\bbC^p$-valued, zero mean orthogonal-increment random measure such that $\bbE \Phi_{\mathcal M}(dx) \Phi_{\mathcal M}(dx)^* = dx \times I$.
\item[$(ii)$] Furthermore, for $g_t$ as in \eqref{e:ft(s)=F^(-)(gt(s))}, $\widetilde{X}_H$ admits the representation
\begin{equation}\label{e:YH_ma}
\{\widetilde{X}_H(t)\}_{t \in \bbR} \stackrel{\textnormal{f.d.d.}}= \Big\{\int_\R g_{t}(s) \hspace{0.5mm}\Phi_{\widetilde {\mathcal M}}(ds)\Big\}_{t\in\R},
\end{equation}
where $\Phi_{\widetilde {\mathcal M}}(ds)$ is a $\bbR^p$-valued, zero mean orthogonal-increment random measure such that $\bbE \Phi_{\widetilde {\mathcal M}}(ds) \Phi_{\widetilde {\mathcal M}}(ds)^* = ds \times I$.
\end{itemize}
\end{proposition}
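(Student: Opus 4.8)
The plan is to realize each crossed-over representation as the image of the original stochastic integral under a unitary change of integrand space — the Fourier transform — and to read off the required covariance structure of the new random measure from the Plancherel identity. The guiding principle is that, under the normalization conditions \eqref{e:int_zz*=I} and \eqref{e:int_RezRez*=sigma^2_I=int_ImzImz*}, the maps $g\mapsto\int_{\bbR}g\,{\mathcal M}(ds)$ and $\widetilde{g}\mapsto\int_{\bbR}\widetilde{g}\,\widetilde{{\mathcal M}}(dx)$ are linear isometries into $L^{2}(\bbP;\bbR^{p})$ whose control measure is Lebesgue with unit ($I$) covariance. Since the Fourier transform ${\mathcal F}$ (as in \eqref{e:Prop3.1_DP2011}) is, after fixing its normalization, a unitary bijection $L^{2}(\bbR,M(p,\bbR))\to L^{2}_{\textnormal{Herm}}(\bbR)$, composing these two isometries produces the desired random measures.

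For part $(i)$ I would first record the second-order structure of the moving average integral: using \eqref{e:int_f(s)M(ds)} together with \eqref{e:int_zz*=I},
\[
\bbE\Big[\Big(\int_{\bbR}g\,{\mathcal M}(ds)\Big)\Big(\int_{\bbR}h\,{\mathcal M}(ds)\Big)^{*}\Big]
=\int_{\bbR}g(s)\Big(\int_{\bbR^{p}}{\mathbf z}{\mathbf z}^{*}\mu(d{\mathbf z})\Big)h(s)^{*}\,ds
=\int_{\bbR}g(s)h(s)^{*}\,ds ,
\]
so $g\mapsto\int g\,{\mathcal M}(ds)$ is an isometry on $L^{2}(\bbR,M(p,\bbR))$. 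I would then define the $\bbC^{p}$-valued random measure $\Phi_{\mathcal M}$ by declaring
\[
\int_{\bbR}\widetilde{g}\,\Phi_{\mathcal M}(dx):=\int_{\bbR}{\mathcal F}^{-1}(\widetilde{g})\,{\mathcal M}(ds),\qquad \widetilde{g}\in L^{2}_{\textnormal{Herm}}(\bbR),
\]
i.e.\ $\Phi_{\mathcal M}$ is the spectral (Fourier) image of ${\mathcal M}$. By Plancherel the covariance functional becomes $\int_{\bbR}\widetilde{g}(x)\widetilde{h}(x)^{*}\,dx$, so $\Phi_{\mathcal M}$ has orthogonal increments with $\bbE\Phi_{\mathcal M}(dx)\Phi_{\mathcal M}(dx)^{*}=dx\times I$; the Hermitian symmetry of $\widetilde{g}$ makes ${\mathcal F}^{-1}(\widetilde{g})$ real and hence the images $\bbR^{p}$-valued. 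Specializing to $\widetilde{g}=\widetilde{g}_{t}$ and using $g_{t}={\mathcal F}^{-1}(\widetilde{g}_{t})$ from \eqref{e:ft(s)=F^(-)(gt(s))} gives $X_{H}(t)=\int g_{t}\,{\mathcal M}(ds)=\int\widetilde{g}_{t}\,\Phi_{\mathcal M}(dx)$ a.s.\ for each $t$, hence \eqref{e:XH_harm} and a fortiori the claimed equality of finite-dimensional distributions.

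Part $(ii)$ is the mirror image, run in the opposite direction. Here the isometry is $\widetilde{g}\mapsto\int\widetilde{g}\,\widetilde{{\mathcal M}}(dx)$: expanding $2\Re(\widetilde{g}(x){\mathbf z})=\Re\widetilde{g}(x){\mathbf z}_{1}-\Im\widetilde{g}(x){\mathbf z}_{2}$ in \eqref{e:int_f(xi)M(dxi)}, the phase-invariance \eqref{e:Mtilde(dxi)=e^(i*theta)Mtilde(dxi)} kills the cross terms $\int\Re{\mathbf z}\,\Im{\mathbf z}^{*}\mu(d{\mathbf z})$, while the balance condition \eqref{e:int_RezRez*=sigma^2_I=int_ImzImz*} reduces the covariance to $\int_{\bbR}\Re\big(\widetilde{g}(x)\widetilde{h}(x)^{*}\big)\,dx$, again a unit-covariance Lebesgue control measure. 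I would then define the $\bbR^{p}$-valued measure $\Phi_{\widetilde{\mathcal M}}$ by $\int g\,\Phi_{\widetilde{\mathcal M}}(ds):=\int{\mathcal F}(g)\,\widetilde{{\mathcal M}}(dx)$, verify via Plancherel that $\bbE\Phi_{\widetilde{\mathcal M}}(ds)\Phi_{\widetilde{\mathcal M}}(ds)^{*}=ds\times I$ with real-valued images, and specialize to $g=g_{t}$, ${\mathcal F}(g_{t})=\widetilde{g}_{t}$, obtaining \eqref{e:YH_ma}.

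The main obstacle is the real/complex bookkeeping needed to pin the isometry constants to exactly $I$ (rather than a scalar multiple or a non-scalar matrix) and to guarantee that the crossed measures live over the correct field — $\bbC^{p}$ with Hermitian symmetry in $(i)$ and $\bbR^{p}$ in $(ii)$. This is precisely where \eqref{e:int_zz*=I}, \eqref{e:int_RezRez*=sigma^2_I=int_ImzImz*}, the phase-invariance \eqref{e:Mtilde(dxi)=e^(i*theta)Mtilde(dxi)}, and the doubling $2\Re(\cdot)$ in \eqref{e:int_f(xi)M(dxi)} must be deployed together; a secondary but genuine point is to fix the normalization of ${\mathcal F}$ (the placement of the $2\pi$) consistently with Plancherel so that the spectral covariance comes out as $dx\times I$. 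Once these constants are settled, the representations follow immediately from the abstract isometry argument, and crucially no independence of increments is invoked — only orthogonality, which the $L^{2}$ construction supplies automatically, consistent with the fact that $\Phi_{\mathcal M}$ and $\Phi_{\widetilde{\mathcal M}}$ generally have dependent increments.
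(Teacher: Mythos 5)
Your construction is essentially the paper's own argument: the paper proves this via Proposition~\ref{p:spec}, which defines $\Phi_{\mathcal M}$ and $\Phi_{\widetilde{\mathcal M}}$ explicitly on intervals as Fourier-type stochastic integrals against ${\mathcal M}$ and $\widetilde{\mathcal M}$ (adapting Marquardt--Stelzer/Rozanov), verifies the zero-mean, orthogonal-increment, $dx \times I$ covariance structure under \eqref{e:int_zz*=I} and \eqref{e:int_RezRez*=sigma^2_I=int_ImzImz*}, and then invokes the Parseval relations \eqref{e:parseval1}--\eqref{e:parseval2} -- precisely your ``Fourier image of the original measure plus Plancherel'' plan, with your abstract definition on integrands replaced by the equivalent indicator/interval version. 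One small correction to your bookkeeping: the cross terms $\int \Re {\mathbf z}\, \Im {\mathbf z}^* \mu(d{\mathbf z})$ in the rhofLm covariance vanish because the Hermitian symmetry of the integrands makes the cross contribution an odd function of $x$ (cf.\ \eqref{e:E_int_isometry_harm_f(xi)_cross}), not because of the phase-invariance \eqref{e:Mtilde(dxi)=e^(i*theta)Mtilde(dxi)}; otherwise the argument matches the paper's.
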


\begin{example}
Suppose $M_- = 0$ and
\begin{equation}\label{e:Re_eig(H)_cap_1/2=empty}
\Re \hspace{0.5mm}\eig(H) \cap \{1/2\} = \emptyset.
\end{equation}
Let $A = \Gamma(D+I)e^{-i\pi D/2}$. Then, by expression \eqref{e:Prop3.1_DP2011}, we can recast representation \eqref{e:XH_harm} as
\begin{equation*}
\{X_H(t)\}_{t \in \bbR} \stackrel{\textnormal{f.d.d.}}= \Big\{\int_\R \frac{e^{\imag tx}-1}{\imag x} \big\{ x^{-D}_+ A + x^{-D}_- \overline A \big\}\hspace{0.5mm}\Phi_{\mathcal M}(dx) \Big\}_{t \in \bbR}.
\end{equation*}
On the other hand, for this same choice of the parameter $A$ and still assuming condition \eqref{e:Re_eig(H)_cap_1/2=empty} holds, again by expression \eqref{e:Prop3.1_DP2011} we can rewrite \eqref{e:YH_ma} as
\begin{equation*}
\{\widetilde{X}_H(t)\}_{t \in \bbR} \stackrel{\textnormal{f.d.d.}}= \Big\{\int_\R \big\{(t-s)_+^D - (-s)_+^D \big\} \hspace{0.5mm}\Phi_{\widetilde {\mathcal M}}(ds)\Big\}_{t\in\R}.
\end{equation*}
\end{example}

In Proposition \ref{p:maofLm->BH(t)_rhofLm->BH(t)}, we establish the large and small scale behaviors of maofLm and rhofLm, respectively. In the statement of the proposition, $\stackrel{\text{f.d.d.}}{\to}$ denotes the convergence of finite-dimensional distributions.
\begin{proposition}\label{p:maofLm->BH(t)_rhofLm->BH(t)} Let $H$ be a (Hurst) matrix whose eigenvalues satisfy condition \eqref{e:0<Re(h)<1}.
\begin{itemize}
\item[$(i)$] Let $X=\{X_H(t)\}_{t\in\R}$ be a maofLm with Hurst exponent $H$. Suppose its associated L\'{e}vy measure satisfies condition \eqref{e:int_zz*=I}.  Then,
$$
\left\{c^{-H}X_H(ct)\right\}_{t\in\R} \stackrel{\textnormal{f.d.d.}}{\to} \left\{B_H(t)\right\}_{t\in\R}, \quad c\to\infty,
$$
where $B_H$ is an ofBm with Hurst exponent $H$.
\item[$(ii)$] Let $\widetilde{X}_{H}=\{\widetilde{X}_{H}(t)\}_{t\in\R}$ be a rhofLm with exponent $H$. Suppose its associated L\'{e}vy measure satisfies condition \eqref{e:int_RezRez*=sigma^2_I=int_ImzImz*}.  Then, for every fixed $s\in\R$,
$$
\left\{\varepsilon^{-H}\big(\widetilde{X}_{H}(s+\varepsilon t)-\widetilde{X}_{H}(s)\big)\right\}_{t\in\R} \stackrel{\textnormal{f.d.d.}}{\to} \left\{B_H(t)\right\}_{t\in\R}, \quad \varepsilon \to 0^+,
$$
where $B_H$ is an ofBm with Hurst exponent $H$.
\end{itemize}
\end{proposition}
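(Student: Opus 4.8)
The plan is to prove convergence of the finite-dimensional distributions by showing that, for fixed $t_1<\cdots<t_m$ and ${\mathbf u}_1,\dots,{\mathbf u}_m\in\bbR^p$, the characteristic functions furnished by Theorem \ref{t:rhofLm_maofLm_integ_repres_chf} converge to the Gaussian characteristic function $\exp\{-\tfrac12\sum_{j,k}{\mathbf u}_j^*\,\bbE B_H(t_j)B_H(t_k)^*\,{\mathbf u}_k\}$ of the stated ofBm, after which the L\'evy continuity theorem finishes the argument. Throughout I write $\psi(\eta)=e^{\imag\eta}-1-\imag\eta$, recall $D=H-(1/2)I$, and use Taylor's theorem in the form $\psi(\eta)=-\eta^2/2+r(\eta)$ with $|r(\eta)|\le\min(|\eta|^3/6,\eta^2)$.

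For part $(i)$, I would use $\langle{\mathbf u}_j, c^{-H}X_H(ct_j)\rangle=\langle c^{-H^*}{\mathbf u}_j, X_H(ct_j)\rangle$, insert the kernels $g_{ct_j}$ into \eqref{e:mafLm_chf}, and change variables $s=cs'$. Since $g_{ct}(cs')=c^{D}g_t(s')$ (which follows from the Fourier scaling $\widetilde g_{ct}(x)=c^{I+D}\widetilde g_t(cx)$ of \eqref{e:fLm_filter_harm} through \eqref{e:ft(s)=F^(-)(gt(s))}) and $c^{-H}c^{D}=c^{-(1/2)I}$, the linear form in the exponent collapses to $c^{-1/2}w(s',{\mathbf z})$, where $w(s',{\mathbf z})=\sum_j{\mathbf u}_j^* g_{t_j}(s'){\mathbf z}$, so with the Jacobian factor $c$ the log-characteristic function equals $c\int_{\bbR}\int_{\bbR^p}\psi(c^{-1/2}w)\,\mu(d{\mathbf z})\,ds'$. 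Then $c\,\psi(c^{-1/2}w)=-\tfrac12 w^2+c\,r(c^{-1/2}w)$, where $|c\,r(c^{-1/2}w)|\le\min(\tfrac16 c^{-1/2}|w|^3,\,w^2)\to0$ pointwise and is dominated by the integrable $w^2$ (integrability holds because $g_{t_j}\in L^2(\bbR,M(p,\bbR))$ and $\int{\mathbf z}{\mathbf z}^*\mu(d{\mathbf z})<\infty$). Dominated convergence yields the limit $-\tfrac12\int\int w^2\mu\,ds'$; invoking \eqref{e:int_zz*=I} to get $\int_{\bbR^p}w^2\mu(d{\mathbf z})=\sum_{j,k}{\mathbf u}_j^* g_{t_j}(s')g_{t_k}(s')^*{\mathbf u}_k$ and then $\bbE B_H(t_j)B_H(t_k)^*=\int g_{t_j}g_{t_k}^*\,ds'$ produces exactly the ofBm exponent. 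The point is that no oscillatory factor appears here, so an $\varepsilon$-free dominating function suffices.

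Part $(ii)$ follows the same template but with a genuinely harder core. Writing $\widetilde X_H(s+\varepsilon t)-\widetilde X_H(s)=\int_{\bbR}e^{\imag s x}\tfrac{e^{\imag\varepsilon t x}-1}{\imag x}\{x_+^{-D}A+x_-^{-D}\overline A\}\,\widetilde{\mathcal M}(dx)$, substituting into \eqref{e:rhofLm_chf}, changing variables $x=\xi/\varepsilon$, and using $(\xi/\varepsilon)_\pm^{-D}=\varepsilon^{D}\xi_\pm^{-D}$ together with $\varepsilon^{-H}\varepsilon^{I+D}=\varepsilon^{(1/2)I}$, the rescaled kernel becomes $e^{\imag s\xi/\varepsilon}\varepsilon^{1/2}\widetilde g_{t_j}(\xi)$. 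With Jacobian $\varepsilon^{-1}$, the log-characteristic function reads $\varepsilon^{-1}\int\int\psi(2\varepsilon^{1/2}R_\varepsilon)\,\mu(d{\mathbf z})d\xi$, where $R_\varepsilon(\xi,{\mathbf z})=\Re(e^{\imag s\xi/\varepsilon}G(\xi){\mathbf z})$ and $G(\xi)=\sum_j{\mathbf u}_j^*\widetilde g_{t_j}(\xi)$. The main obstacle, absent in $(i)$, is the rapidly oscillating phase $e^{\imag s\xi/\varepsilon}$: the integrand no longer converges pointwise, so I must extract the limit through an exact expansion plus an averaging argument rather than by naive dominated convergence.

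Concretely, splitting $\psi=-(\cdot)^2/2+r$ again, the remainder $\varepsilon^{-1}r(2\varepsilon^{1/2}R_\varepsilon)$ is bounded by $\min(\tfrac43\varepsilon^{1/2}|G{\mathbf z}|^3,\,4|G{\mathbf z}|^2)$ via $|R_\varepsilon|\le|G{\mathbf z}|$; this $\to0$ pointwise and is dominated by $4|G{\mathbf z}|^2\in L^1$, so it vanishes by dominated convergence---crucially, the $\min$-bound sidesteps the absence of a finite third moment for $\mu$. The quadratic term equals $-2\int\int R_\varepsilon^2\,\mu\,d\xi$, and the identity $R_\varepsilon^2=\tfrac12|G{\mathbf z}|^2+\tfrac12\Re(e^{2\imag s\xi/\varepsilon}(G{\mathbf z})^2)$ isolates a stationary part from an oscillatory one. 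Condition \eqref{e:int_RezRez*=sigma^2_I=int_ImzImz*} gives $\int{\mathbf z}{\mathbf z}^*\mu(d{\mathbf z})=\tfrac12 I+\imag C$ with $C$ real antisymmetric, and $\int{\mathbf z}{\mathbf z}^{\top}\mu(d{\mathbf z})=\imag S$ with $S$ real symmetric. The Hermitian symmetry $G(-\xi)=\overline{G(\xi)}$ forces $\xi\mapsto G(\xi)CG(\xi)^*$ to be odd, so its integral vanishes; meanwhile the oscillatory term dies as $\varepsilon\to0^+$ by the Riemann--Lebesgue lemma when $s\neq0$, and by the same oddness of $\xi\mapsto\Im(G(\xi)SG(\xi)^{\top})$ when $s=0$ (where $\widetilde X_H(0)=0$), using that $\xi\mapsto G(\xi)SG(\xi)^{\top}$ and $\xi\mapsto\|G(\xi)\|^2$ lie in $L^1(\bbR)$, which follows from $\widetilde g_{t_j}\in L^2(\bbR,M(p,\bbC))$. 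What survives is $-\tfrac12\int_{\bbR}\|G(\xi)\|^2\,d\xi=-\tfrac12\sum_{j,k}{\mathbf u}_j^*\,\bbE B_H(t_j)B_H(t_k)^*\,{\mathbf u}_k$, the ofBm exponent. I expect the real work to lie in rigorously justifying the Riemann--Lebesgue step through $L^1$-membership of the rescaled kernels and in verifying the two oddness cancellations that make the limit independent of the base point $s$.
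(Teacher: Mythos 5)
Your proposal is correct, and part $(i)$ coincides with the paper's argument: same change of variables $s=cs'$, same use of the scaling $g_{ct}(cs')=c^{D}g_t(s')$ to collapse the exponent to $c^{-1/2}w$, and the same dominated-convergence step with the bound $|c\,\psi(c^{-1/2}w)|\le w^2$, followed by \eqref{e:int_zz*=I}. For part $(ii)$, however, you take a genuinely different route. The paper first reduces $\widetilde X_H(s+\varepsilon t)-\widetilde X_H(s)$ to $\widetilde X_H(\varepsilon t)$ by invoking stationarity of the increments, which removes the base point $s$ altogether; after the change of variables $y=\varepsilon x$ the computation is then a verbatim replay of part $(i)$ (with the Hermitian symmetry of $\widetilde g_{t_j}$ used only to kill the $\Re\widetilde g\,\Im\widetilde g$ cross term after integration in $y$). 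You instead keep the phase $e^{\imag s\xi/\varepsilon}$, split $R_\varepsilon^2=\tfrac12|G{\mathbf z}|^2+\tfrac12\Re\big(e^{2\imag s\xi/\varepsilon}(G{\mathbf z})^2\big)$, dispose of the oscillatory piece by Riemann--Lebesgue (using $\|G\|^2\in L^1$, which follows from $\widetilde g_{t_j}\in L^2$) for $s\neq0$ and by the oddness of $\xi\mapsto\Im\big(G(\xi)SG(\xi)^{\top}\big)$ for $s=0$, and cancel the $GCG^*$ term by the same Hermitian-symmetry oddness; your $\min(\tfrac43\varepsilon^{1/2}|G{\mathbf z}|^3,4|G{\mathbf z}|^2)$ bound correctly avoids any third-moment assumption on $\mu$. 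What your longer route buys is self-containedness: the f.d.d.\ identity $\widetilde X_H(s+\varepsilon t)-\widetilde X_H(s)\stackrel{\textnormal{f.d.d.}}=\widetilde X_H(\varepsilon t)$ used by the paper is a strict-sense stationarity statement, which Theorem \ref{t:rhofLm_maofLm_integ_repres_chf}$(v)$ guarantees only under the additional rotational-invariance condition \eqref{e:Mtilde(dxi)=e^(i*theta)Mtilde(dxi)}, not assumed in Proposition \ref{p:maofLm->BH(t)_rhofLm->BH(t)}$(ii)$; your direct treatment of the oscillating phase establishes the limit under \eqref{e:int_RezRez*=sigma^2_I=int_ImzImz*} alone, at the cost of the extra Riemann--Lebesgue and symmetry bookkeeping. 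Both arguments are sound; just be sure, when writing yours up, to state explicitly the $L^1$-membership used in the Riemann--Lebesgue step and the purely imaginary character of $G(\xi)CG(\xi)^*$ that makes the first cancellation an oddness argument.
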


In Proposition \ref{p:lass2}, we show that some maofLm and rhofLm instances are o.s.s.\ in the small and large scale limits, respectively -- in both cases, with a different matrix scaling exponent. This occurs when the associated random measures $\mathcal M$ and $\widetilde{ \mathcal M}$ are chosen to be ``locally'' operator-stable, in the sense that their L\'evy measures around ${\mathbf 0}$ behave like that of an operator-stable L\'evy process.  These limiting processes, in turn, are instances of operator-stable o.s.s.~processes recently studied in Kremer and Scheffler \cite{kremer:scheffler:2019}. For the reader's convenience, the precise definition and more details about such measures and associated independently scattered random measures are provided in Section \ref{s:tempered_oper-stable_Levy_measures}.

Recall for any $M\in M(p,\bbR)$, $\lambda_i(M)$ denotes the $i^{\textnormal{th}}$ eigenvalue of $M$ in the ordering $\Re \lambda_1(M)\leq \ldots \leq \Re \lambda_p(M)$, where an arbitrary ordering of eigenvalues is adopted in case real parts are equal.
\begin{proposition}\label{p:lass2}
Let $B\in M(p,\R)$ be such that $\Re\hspace{0.5mm} \textnormal {eig} (B) \subseteq(1/2,1)$.

\begin{itemize}
\item[(i)] Let $X_H= \{X_H(t)\}_{t \in \bbR}$ be a maofLm under \eqref{e:Re_eig(H)=(0,1)-(1/2)}, and suppose its associated L\'{e}vy measure is given by $\mu_{B,q}$ as in \eqref{e:def_TalphaS}.   Further suppose that $HB=BH$, and that $\Re\lambda_p(H -(1/2)I)+ \Re \lambda_p(B)<1$.
Then, $X_H$ is locally o.s.s.\ with exponent
\begin{equation}\label{e:H-tilde_1=D+B}
\widetilde H_1= H+ \big( B-(1/2)I \big)
\end{equation}
in the sense that, for every fixed $s\in\R$,
\begin{equation}\label{e:ofLm_local_behavior_TOS}
\left\{\varepsilon^{-\widetilde H_1}\big(X_H(s+\varepsilon t)-X_H(s)\big)\right\}_{t\in\R} {\stackrel{{\textnormal{f.d.d.}}}{\to}} \left\{\Theta_{\widetilde H_1,B}(t)\right\}_{t\in\R}, \quad \varepsilon\to 0^+.
\end{equation}
In \eqref{e:ofLm_local_behavior_TOS}, $\Theta_{\widetilde H,B}(t)$ is an $\widetilde H_1$-o.s.s.\ process with representation
\begin{equation}\label{e:opstab_MA}
\Theta_{\widetilde H_1,B}(t) = \int_\R g_t(s) L_{B}(ds),
\end{equation}
where $L_B$ is an $\R^p$-valued independently scattered ID random measure generated by a full operator-stable random measure with exponent $B$ as in \eqref{e:def_opstab}.

\item[$(ii)$] Let $\widetilde X_H = \{\widetilde X_H(t)\}_{t \in \bbR}$ be a rhofLm, and suppose its associated L\'{e}vy measure $\mu_{\R^{2p}}$ in the identification \eqref{e:mu_Cp=muR2p} is given by  $ \mu_{\widetilde B,q}$ as in \eqref{e:def_TalphaS}, where  $\widetilde B =B\oplus B$.  Further suppose that $H$ and $A$ commute with $B$, and that $\Re\lambda_1(H)+(\frac{1}{2}-\Re \lambda_p(B))>0$ and $\Re\lambda_p(H)+(\frac{1}{2}-\Re \lambda_1(B))<1$. Then, $\widetilde X_H$ is asymptotically o.s.s.\ with exponent
\begin{equation}\label{e:H-tilde_2=D+(1/2I-B)}
\widetilde H_2=H +\big((1/2)I - B \big)
\end{equation}
 in the sense that
\begin{equation}\label{e:rhofLm_local_behavior_TOS}
\left\{c^{-\widetilde H_2}\widetilde X_H(ct)\right\}_{t\in\R} {\stackrel{\textnormal{f.d.d.}}{\to}} \left\{\Theta'_{\widetilde H_2, B}(t)\right\}_{t\in\R}, \quad c\to\infty. \end{equation}
In \eqref{e:rhofLm_local_behavior_TOS}, $ \Theta'_{\widetilde H,B}$ is an $\widetilde H_2$-o.s.s.\ process with representation
\begin{equation}\label{e:opstab_harm}
\Theta'_{\widetilde H,B}(t) =   2\Re\Big(\int_{\R}\widetilde g_t(x) \widetilde L_{B}(dx)\Big),\end{equation}
where $\widetilde L_B(dx)$ is a $\C^p$-valued ID independently scattered random measure generated by a full operator-stable random measure with exponent $\widetilde B$ as in \eqref{e:def_opstab}.
\end{itemize}
\end{proposition}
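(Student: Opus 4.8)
The plan is to prove both finite-dimensional convergences at the level of the log-characteristic functions furnished by Theorem~\ref{t:rhofLm_maofLm_integ_repres_chf}, since f.d.d.\ convergence amounts to pointwise convergence of the joint characteristic functions. Fix $m\in\bbN$, $t_1<\cdots<t_m$ and $\mathbf u_1,\dots,\mathbf u_m\in\R^p$; every integrand below depends on these data only through the finite sum $\sum_{j}\mathbf u_j^{*}(\cdot)$, so the single-time argument transcribes verbatim. For part $(i)$ I would start from \eqref{e:mafLm_chf} applied to the rescaled increments $\varepsilon^{-\widetilde H_1}(X_H(s+\varepsilon t_j)-X_H(s))$ and exploit the exact self-similarity of the moving-average kernel \eqref{e:fLm_filter}: after the change of variables $u=s+\varepsilon v$ one has $g_{s+\varepsilon t}(s+\varepsilon v)-g_s(s+\varepsilon v)=\varepsilon^{D}g_t(v)$, so that, since $D$ and $B$ commute (from $HB=BH$) and $\widetilde H_1=D+B$ by \eqref{e:H-tilde_1=D+B}, the rescaled kernel collapses to $\varepsilon^{-\widetilde H_1}\varepsilon^{D}g_t(v)=\varepsilon^{-B}g_t(v)$, with a compensating factor $\varepsilon$ from $du=\varepsilon\,dv$. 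Because $g_t$ commutes with $B$ — which holds once $A$, equivalently $M_\pm$, commutes with $B$, as in $(ii)$ — the substitution $\mathbf z=\varepsilon^{B}\mathbf w$ turns $\varepsilon^{-B}g_{t_j}(v)\mathbf z$ into $g_{t_j}(v)\mathbf w$ while producing the dilated measure $\varepsilon\,\mu_{B,q}(\varepsilon^{B}\,\cdot)$. The decisive structural input is then the regular variation of the tempered operator-stable L\'evy measure at the origin, namely that $\varepsilon\,\mu_{B,q}(\varepsilon^{B}\,\cdot)$ converges vaguely on $\R^p\setminus\{\mathbf 0\}$, as $\varepsilon\to 0^+$, to the pure operator-stable L\'evy measure $\Phi_B$ of \eqref{e:def_opstab} (the tempering factor $q$ being inoperative near $\mathbf 0$). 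Passing to the limit identifies the limiting log-characteristic function with that of $\int_\R g_t(v)\,L_B(dv)=\Theta_{\widetilde H_1,B}(t)$ in \eqref{e:opstab_MA}.

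Part $(ii)$ is entirely parallel in the Fourier domain. Starting from \eqref{e:rhofLm_chf} for $c^{-\widetilde H_2}\widetilde X_H(ct)$ and using the self-similarity of the harmonizable kernel \eqref{e:fLm_filter_harm}, the substitution $x=y/c$ gives $\widetilde g_{ct}(y/c)=c^{I+D}\widetilde g_t(y)$; since $\widetilde H_2=D+I-B$ by \eqref{e:H-tilde_2=D+(1/2I-B)}, the rescaled kernel becomes $c^{-\widetilde H_2}\widetilde g_{ct}(y/c)=c^{B}\widetilde g_t(y)$, with a compensating factor $c^{-1}$ from $dx=dy/c$. As $A$ and $H$ commute with $B$, the kernel $\widetilde g_t$ commutes with $\widetilde B=B\oplus B$ under the identification \eqref{e:mu_Cp=muR2p}, so the substitution $\mathbf z=c^{-B}\mathbf w$ together with the origin regular variation $c^{-1}\mu_{\widetilde B,q}(c^{-\widetilde B}\,\cdot)\to\Phi_{\widetilde B}$ (vaguely, as $c\to\infty$, again probing $\mu_{\widetilde B,q}$ near $\mathbf 0$) identifies the limit with the log-characteristic function of $\Theta'_{\widetilde H_2,B}(t)=2\Re\int_\R\widetilde g_t(x)\widetilde L_B(dx)$ in \eqref{e:opstab_harm}. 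That $\Theta_{\widetilde H_1,B}$ and $\Theta'_{\widetilde H_2,B}$ are well defined and $\widetilde H_1$- resp.\ $\widetilde H_2$-operator self-similar then follows from Kremer and Scheffler~\cite{kremer:scheffler:2019} together with a short scaling computation: e.g.\ $g_{ct}(cw)=c^{D}g_t(w)$, the relation $\Phi_B(c^{B}\,\cdot)=c^{-1}\Phi_B(\cdot)$, and the commutation of $g_t$ with $B$ give $\Theta_{\widetilde H_1,B}(ct)\stackrel{\textnormal{f.d.d.}}=c^{\widetilde H_1}\Theta_{\widetilde H_1,B}(t)$.

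The main obstacle is to upgrade the vague convergence of the dilated L\'evy measures to convergence of the integrals against the specific, fully compensated integrands $G_{v}(\mathbf w)=e^{\imag\sum_j\mathbf u_j^{*}g_{t_j}(v)\mathbf w}-1-\imag\sum_j\mathbf u_j^{*}g_{t_j}(v)\mathbf w$ (and their harmonizable analogues), and then to integrate out $v$ (resp.\ $y$) through the limit. This demands uniform-in-$\varepsilon$ (resp.\ uniform-in-$c$) domination on three regimes. For small $\|\mathbf w\|$ the bound $|G_v(\mathbf w)|\le C\|\mathbf w\|^2$ is integrable against any L\'evy measure; for large $\|\mathbf w\|$ the linear bound $|G_v(\mathbf w)|\le C(1+\|\mathbf w\|)$ requires first-moment integrability of the big jumps, which is where $\Re\,\eig(B)<1$ (operator-stable index above $1$) enters in the limit, the tempering securing it at each finite scale; and integrability over the time (resp.\ frequency) variable as $v,y\to\pm\infty$ is exactly the well-definedness of $\Theta_{\widetilde H_1,B}$, $\Theta'_{\widetilde H_2,B}$, governed by the spectral growth of the kernels — this is where the hypotheses $\Re\lambda_p(H-(1/2)I)+\Re\lambda_p(B)<1$ in $(i)$, and $\Re\lambda_1(H)+(\tfrac12-\Re\lambda_p(B))>0$, $\Re\lambda_p(H)+(\tfrac12-\Re\lambda_1(B))<1$ in $(ii)$, are used. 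I would carry this out by the classical small-jump/large-jump truncation of the Poisson integral, controlling the small-jump part uniformly via the second-order Taylor estimate on $G_v$ and the large-jump part via the linear bound and the uniform integrability of $\|\mathbf w\|\wedge\|\mathbf w\|^2$ supplied by the regular variation — a quantitative strengthening of the convergence already established, in the Gaussian limit, in Proposition~\ref{p:maofLm->BH(t)_rhofLm->BH(t)}.
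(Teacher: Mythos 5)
Your proposal follows essentially the same route as the paper's proof: convergence of finite-dimensional distributions is established at the level of characteristic functions, the kernel scaling ($g_{s+\varepsilon t}(s+\varepsilon v)-g_s(s+\varepsilon v)=\varepsilon^{D}g_t(v)$ in the time domain, $\widetilde g_{ct}(y/c)=c^{D+I}\widetilde g_t(y)$ in the Fourier domain) reduces the prelimit exponent to one in which only the jump variable is dilated by $\varepsilon^{-B}$ (resp.\ $c^{B}$), the operator-stable behavior of $\mu_{B,q}$ near the origin (where $q\to 1$) produces the pure operator-stable measure in the limit, and existence plus operator self-similarity of the limit processes is delegated to Kremer and Scheffler. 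The packaging of the limit step differs: you phrase it as vague convergence of the dilated measures $\varepsilon\,\mu_{B,q}(\varepsilon^{B}\cdot)$ plus a small-jump/large-jump truncation with uniform integrability of $\|\mathbf w\|\wedge\|\mathbf w\|^{2}$, whereas the paper substitutes the polar representation \eqref{e:def_TalphaS} and the radial change of variable $\xi=r/\varepsilon$ (resp.\ $\xi=cr$), so that the prelimit measure is literally $q(\varepsilon\xi,\boldsymbol\theta)\,\xi^{-2}d\xi\,\lambda(d\boldsymbol\theta)$, dominated by the limiting measure because $0\le q\le 1$; integrability of the dominating function (also in the time/frequency variable) is then exactly the well-definedness of $\Theta_{\widetilde H_1,B}$ and $\Theta'_{\widetilde H_2,B}$, which the paper establishes first (Maejima--Mason, Theorem 5.4, for part $(i)$, and a detailed verification of the Kremer--Scheffler integrability condition \eqref{e:kremer_scheffler_2.5} via the bounds \eqref{e:max2} for part $(ii)$ --- that verification is where the eigenvalue hypotheses do their real work, and it is the part your sketch leaves implicit). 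Your truncation route is workable, but note that proving the uniform integrability you invoke reduces to the same polar computation, so the paper's order of operations is the more economical one.

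One point in part $(i)$ deserves attention: to move the dilation next to the jump variable you invoke commutation of $g_t$ with $B$, ``which holds once $M_\pm$ commutes with $B$, as in $(ii)$.'' That commutation is not among the hypotheses of $(i)$, which assumes only $HB=BH$; without it, $\varepsilon^{-B}g_t(v)\mathbf z = g_t(v)\varepsilon^{-B}\mathbf z$ fails and the prelimit integrand involves $\varepsilon^{-B}M_\pm\varepsilon^{B}$, which need not converge as $\varepsilon\to 0^{+}$. Be aware that the paper's own proof performs the same interchange silently (rewriting $g_{t_j}(v)^{*}\varepsilon^{-B^{*}}\mathbf u_j$ as $\varepsilon^{-B^{*}}g_{t_j}(v)^{*}\mathbf u_j$ before passing to polar coordinates), so you have surfaced, rather than created, this hidden assumption; but as written your argument for $(i)$ uses a hypothesis the statement does not grant, and you should either justify the step without it or record it explicitly, in exact parallel to the role that the assumption ``$A$ commutes with $B$'' plays --- and is correctly licensed to play --- in part $(ii)$.
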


\section{Time reversibility}\label{s:time_revers}

Recall that a stochastic process $X = \{X(t)\}_{t \in \bbR}$ is said to be time-reversible if $\{X(-t)\}_{t \in \bbR} \stackrel{\textnormal{f.d.d.}}= \{X(t)\}_{t \in \bbR}$. In this section, we provide characterizations of time reversibility for maofLm and rhofLm under mild assumptions. In the characterizations, the true difficulty lies in establishing \textit{necessary }conditions, i.e., what the assumption of time reversibility implies about the parametric representations of maofLm and rhofLm. The proofs require results on the \textit{uniqueness} of multivariate stochastic integral representations, which are developed in Section \ref{s:uniqueness}. To provide these uniqueness results, we adapt the fundamental framework constructed in Kabluchko and Stoev \cite{kabluchko:stoev:2016}, Sections 2.1 and 2.2 (see also Maruyama \cite{maruyama:1970}, Samorodnitsky \cite{samorodnitsky:2016}, chapter 3, and Rosi\'{n}ski \cite{rosinski:2018}).

\begin{example}
If a maofLm $X_H$ is time-reversible and satisfies \eqref{e:int_zz*=I}, then its covariance function is given by the fBm-like formula \eqref{e:time_revers_ofBm} with $\Sigma = \E X_H(1) X_H(1)^* $. If a rhofLm $\widetilde X_H$ is time-reversible and satisfies \eqref{e:int_RezRez*=sigma^2_I=int_ImzImz*}, then its covariance function is also given by the formula \eqref{e:time_revers_ofBm} with $\Sigma = \E \widetilde X_H(1) \widetilde X_H(1)^*$. In general, an explicit formula for the covariance function of ofLm is not available. In fact, in the Gaussian case, expression \eqref{e:int_RezRez*=sigma^2_I=int_ImzImz*} is equivalent to time reversibility (see Didier and Pipiras \cite{didier:pipiras:2011}, Proposition 5.2).
\end{example}

To investigate time reversibility in the framework of ofLm, it is convenient to slightly generalize the notation. Simply put, the new argument $\omega$ stands for either the Fourier or time arguments $x$ or $s$. In turn, the vector ${\boldsymbol \varpi} = (\omega,{\mathbf z})$ includes both $\omega \in \bbR$ and the L\'{e}vy measure argument ${\mathbf z} \in \bbR^q$, where either $q = p$ or $q = 2p$. So, more precisely, let
$\overline{\Omega} =  \R \times \bbR^q$, ${\mathcal B}= {\mathcal B}(\overline{\Omega})$ (cf.\ expression \eqref{e:Omega-bar}). Let
\begin{equation}\label{e:kappa(d_omega)_rigidity}
\kappa(d {\boldsymbol \varpi}) = d\omega\otimes \mu(d{\mathbf z}),
\end{equation}
where $\mu(d{\mathbf z})$ is a L\'{e}vy measure satisfying \eqref{e:int_|z|^2_mu(dz)<infty_in_R}.Whenever convenient, we write $\eta(d\omega) \equiv d \omega$. Also define
$$
{\mathcal L}^2_{\kappa(d {\boldsymbol \varpi})}(\overline{\Omega}) = {\mathcal L}^2_{d\omega\otimes \mu(d{\mathbf z})}(\R \times \bbR^q)
$$
\begin{equation}\label{e:integrability_univar}
= \Big\{\varphi:\bbR \times \bbR^q \rightarrow \bbR^p:  \int_{\bbR}\int_{\bbR^q}  \varphi(\omega,{\mathbf z})^*\varphi(\omega,{\mathbf z}) \hspace{0.5mm}\mu(d{\mathbf z})d\omega< \infty\Big\}
\end{equation}
(cf.\ expression \eqref{e:|varphi|_{L^2}}). Then, we express the compensated Poisson random measure on ${\mathcal B}(\overline{\Omega})$ as
\begin{equation}\label{e:N-tilde(ds)}
\bbR \ni \widetilde{N}(d{\boldsymbol \varpi}) \equiv N(d{\boldsymbol \varpi}) - \kappa(d{\boldsymbol \varpi}) \equiv \widetilde{N}(d\omega,d {\mathbf z}),
\end{equation}
where $N(d{\boldsymbol \varpi}) \equiv N(d\omega,d {\mathbf z})$ is a Poisson random measure (cf.\ \eqref{e:Ntilde(dxi,dz)} and \eqref{e:N(ds,dz)_real_z}).  Let $f_t({\boldsymbol \varpi})$ and $\mathfrak{g}_t(\omega)$, $t \in \R$, be two families of $\bbR^p$- and $M(p,q,\bbR)$-valued functions, respectively, where
\begin{equation}\label{e:ft(omega)=gt(s)*x}
\{f_t({\boldsymbol \varpi})\}_{t \in \R} := \{\mathfrak{g}_t(\omega){\mathbf z} \}_{t \in \R}  \subseteq  {\mathcal L}^2_{\kappa(d{\boldsymbol \varpi})}(\overline{\Omega}).
\end{equation}

\begin{example}
For \eqref{e:mafLm_chf}, we can write \eqref{e:ft(omega)=gt(s)*x} with
$$
\omega = s, \quad {\mathfrak g}_t(s) = g_t(s) \in M(p,\bbR), \quad q = p \quad \textnormal{and} \quad \mu(d{\mathbf z})\textnormal{ as in \eqref{e:mafLm_chf}}.
$$
For \eqref{e:chf_harm_gt}, we can reexpress \eqref{e:ft(omega)=gt(s)*x} with
$$
\omega = x, \quad {\mathfrak g}_t(x) = (\Re \widetilde{g}_t(x),\Im \widetilde{g}_t(x)) \in M(p,2p,\bbR), \quad q = 2p \quad \textnormal{and} \quad  \mu(d{\mathbf z}) = (\mu_{\bbR^{2p}} \circ \varsigma^{-1})(d{\mathbf z}),
$$
where $\varsigma({\mathbf z}) = (2 \Re{\mathbf z},-2 \Im{\mathbf z})$.
\end{example}

The main results in this section require some notion of \textit{minimal} (stochastic integral) representation. In the following definition, we revisit the notion of minimality as put forward in Kabluchko and Stoev \cite{kabluchko:stoev:2016}.  \begin{definition}\label{def:minimality}
Let $T\subseteq \R$, and consider the $\bbR^p$-valued stochastic process $X = \{X(t)\}_{t \in T}$ given by the stochastic integral representation
\begin{equation}\label{e:X(t)=int_ft(omega)M(domega)}
X(t) = \int_{\overline{\Omega}}f_t({\boldsymbol \varpi}) \widetilde{N}(d{\boldsymbol \varpi}) = \int_{\R \times \bbR^q} {\mathfrak g}_t(\omega) {\mathbf z} \widetilde{N}(d\omega,d{\mathbf z}), \quad t \in T
\end{equation}
We say $\{f_t\}_{t \in T}$ is a \textit{minimal} representation of the ID stochastic process $X$ \textit{with respect to ${\mathcal B} \mod \kappa$} if the following two conditions hold.
\begin{itemize}
\item [$(i)$] $\sigma(\{f_t\}_{t \in T}) = {\mathcal B} \mod \kappa$, i.e., for every $B \in {\mathcal B}$, there exists $ A \in \sigma(\{f_t\}_{t \in T})$ such that $\kappa(A \Delta B) = 0$; and
\item [$(ii)$] there is no $B \in {\mathcal B}$ such that $\kappa(B) > 0$ and, for every $t \in \R$, $f_t \equiv 0$ a.e.\ on $B$.
\end{itemize}
\end{definition}

In the following theorem, we characterize time reversibility for maofLm. For comments on the minimality assumption, see Remark \ref{r:minimality}.
\begin{theorem}\label{t:maofLm_time-reversibility}
Let $H$ be a (Hurst) matrix whose eigenvalues satisfy \eqref{e:Re_eig(H)=(0,1)-(1/2)}. Let $X_H = \{X_H(t)\}_{t \in \bbR}$ be a maofLm with Hurst matrix $H$.  Further assume that
\begin{equation}\label{e;M+,M-_in_GL(p,R)}
M_+,M_- \in GL(p,\bbR)
\end{equation}
and $\{f_t({\boldsymbol \varpi}),t \in \bbR\}= \{g_t(s){\mathbf z},t \in \bbR\}$ is a minimal representation of $X_H$ with respect to ${\mathcal B}(\bbR^{p+1}) \hspace{-1mm}\mod \kappa$, where $\kappa(d{\boldsymbol \varpi}) = ds \otimes \mu(d{\mathbf z})$ and $\mu(d{\mathbf z})$ is as in \eqref{e:mafLm_chf}.  Then, the following conditions are equivalent.
\begin{itemize}
\item[$(i)$]  $X_H$ is time-reversible;
\item [$(ii)$] The following two conditions hold:
\begin{itemize}
\item[$(a)$] $(M^{-1}_{-}M_{+})\hspace{0.5mm}|_{\textnormal{supp}(\mu)}$ is an involution, i.e.,
\begin{equation}\label{e:M-^(-1)*M+*z=M+^(-1)*M-*z}
M^{-1}_- M_+ \hspace{0.5mm}{\mathbf z} = M^{-1}_+ M_- \hspace{0.5mm}{\mathbf z} \quad \mu(d{\mathbf z})\textnormal{--a.e.};
\end{equation}
\item[$(b)$] the map $\mathbf z \mapsto M^{-1}_+ M_-\mathbf z$ preserves the measure $\mu$, i.e.,
\begin{equation}\label{e:mu(matrix*dx)=mu(dz)}
\mu\big((M^{-1}_- M_+) \hspace{0.5mm}d{\mathbf z} \big) = \mu(d{\mathbf z}).
\end{equation}
\end{itemize}
\end{itemize}
In $(ii)$, condition (a) can be replaced by
\begin{itemize}
\item []
\begin{itemize}
\item [$(a')$]
$$
g_{-t}(s)\mathbf z=g_t(-s)M_-^{-1}M_+ \mathbf z=g_t(-s)M_+^{-1}M_- \mathbf z \quad \kappa(ds,d\mathbf z)\textnormal{--a.e.}
$$
\end{itemize}
\end{itemize}
\end{theorem}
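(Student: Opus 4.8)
The plan is to reduce the statement to a \emph{uniqueness} assertion for minimal stochastic integral representations and then to exploit the explicit analytic structure of the moving average kernel \eqref{e:fLm_filter}. The starting point is the observation that, since the intensity $ds\otimes\mu(d{\mathbf z})$ of $\widetilde{N}$ is invariant under $s\mapsto -s$, the reflected process satisfies $\{X_H(-t)\}_{t\in\bbR}\stackrel{\textnormal{f.d.d.}}{=}\{\int_{\bbR\times\bbR^p} g_{-t}(s){\mathbf z}\,\widetilde{N}(ds,d{\mathbf z})\}_{t\in\bbR}$, so that $\{f_{-t}\}=\{g_{-t}(s){\mathbf z}\}$ is again a representation of a process with the law of $X_H$. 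Because the index set is all of $\bbR$, reindexing $t\mapsto -t$ leaves both the generated $\sigma$-algebra and the non-degeneracy requirement of Definition \ref{def:minimality} unchanged, so $\{f_{-t}\}$ is minimal whenever $\{f_t\}$ is. Time reversibility is therefore equivalent to $\{f_t\}$ and $\{f_{-t}\}$ being two minimal representations of one and the same ID process, and the uniqueness theory of Section \ref{s:uniqueness} supplies the link between them.

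For the direction $(i)\Leftarrow(ii)$, I would simply exhibit the transformation realizing this link. Define $\Phi(s,{\mathbf z})=(-s,\,M_+^{-1}M_-{\mathbf z})$ on $\overline{\Omega}=\bbR\times\bbR^p$. The reflection in $s$ preserves Lebesgue measure, while condition $(b)$ in \eqref{e:mu(matrix*dx)=mu(dz)} states precisely that ${\mathbf z}\mapsto M_+^{-1}M_-{\mathbf z}$ preserves $\mu$; hence $\Phi$ preserves $\kappa$. A direct computation from \eqref{e:fLm_filter}, using $D=H-(1/2)I$ together with the identities $(-s)_{\pm}=s_{\mp}$ and $(-t-s)_{\pm}=(t+s)_{\mp}$, shows that $g_{-t}(s)$ equals $g_t(-s)$ with the roles of $M_+$ and $M_-$ interchanged; invoking condition $(a)$ in \eqref{e:M-^(-1)*M+*z=M+^(-1)*M-*z} (equivalently $(M_+^{-1}M_-)^2{\mathbf z}={\mathbf z}$ on $\textnormal{supp}(\mu)$) then yields $f_{-t}(s,{\mathbf z})=g_{-t}(s){\mathbf z}=g_t(-s)M_+^{-1}M_-{\mathbf z}=f_t(\Phi(s,{\mathbf z}))$ for $\kappa$-a.e.\ $(s,{\mathbf z})$ and every $t$. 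Substituting $f_{-t}=f_t\circ\Phi$ into the characteristic function \eqref{e:mafLm_chf} and changing variables by the $\kappa$-preserving $\Phi$ leaves the exponent invariant, whence $\{X_H(-t)\}\stackrel{\textnormal{f.d.d.}}{=}\{X_H(t)\}$.

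The substantive direction is $(i)\Rightarrow(ii)$. Assuming time reversibility, the uniqueness results of Section \ref{s:uniqueness} yield a bimeasurable, $\kappa$-preserving map $\Phi=(\Phi_s,\Phi_z):\overline{\Omega}\to\overline{\Omega}$ with $g_{-t}(s){\mathbf z}=g_t\big(\Phi_s(s,{\mathbf z})\big)\,\Phi_z(s,{\mathbf z})$ for all $t$ and $\kappa$-a.e.\ $(s,{\mathbf z})$ (the passage from ``$\kappa$-a.e.\ for each fixed $t$'' to ``$\kappa$-a.e.\ for all $t$'' is handled by restricting to a countable dense set of $t$ and using the continuity of $t\mapsto g_t(s){\mathbf z}$). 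The heart of the argument --- and the step I expect to be the main obstacle --- is to extract from this abstract identity the rigid product form of $\Phi$. For fixed $(s,{\mathbf z})$ with ${\mathbf z}\neq{\mathbf 0}$ (which holds $\mu$-a.e.\ since $\mu(\{{\mathbf 0}\})=0$), the left-hand side, viewed as a function of $t$, is non-smooth only at $t=-s$, while the right-hand side is non-smooth only at $t=\Phi_s(s,{\mathbf z})$; since \eqref{e;M+,M-_in_GL(p,R)} forces $M_{\pm}{\mathbf z}\neq{\mathbf 0}$ the kink is genuinely present on the left, and if $\Phi_z(s,{\mathbf z})={\mathbf 0}$ the right-hand side would vanish identically in $t$, a contradiction. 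Matching singular points gives $\Phi_s(s,{\mathbf z})=-s$. The identity then reduces to $g_{-t}(s){\mathbf z}=g_t(-s)\Phi_z(s,{\mathbf z})$, and because $(t+s)_+^{D}$ and $(t+s)_-^{D}$ have disjoint supports in $t$ one matches the $t$-dependent parts on $\{t>-s\}$ and $\{t<-s\}$ separately to obtain $M_-{\mathbf z}=M_+\Phi_z(s,{\mathbf z})$ and $M_+{\mathbf z}=M_-\Phi_z(s,{\mathbf z})$. Inverting (again using \eqref{e;M+,M-_in_GL(p,R)}) yields $\Phi_z(s,{\mathbf z})=M_+^{-1}M_-{\mathbf z}=M_-^{-1}M_+{\mathbf z}$, which is condition $(a)$ and shows $\Phi_z$ is linear and independent of $s$. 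Feeding $\Phi(s,{\mathbf z})=(-s,M_+^{-1}M_-{\mathbf z})$ back into the $\kappa$-preservation of $\Phi$ isolates condition $(b)$.

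It remains to record the equivalence of $(a)$ and $(a')$. The interchange computation of the second paragraph shows that $g_{-t}(s){\mathbf z}=g_t(-s)M_+^{-1}M_-{\mathbf z}$ holds as soon as $(a)$ does, and that under $(a)$ the two matrix products appearing in $(a')$ coincide on $\textnormal{supp}(\mu)$, giving $(a')$; conversely, choosing $t$ for which $g_t(-s)$ is invertible (possible for $s\neq 0$ by \eqref{e;M+,M-_in_GL(p,R)}, since $g_t(-s)$ is dominated by an invertible term as $t\to+\infty$) turns $(a')$ back into $(a)$. As condition $(b)$ is common to both formulations, $(a)$ may be replaced by $(a')$ in $(ii)$. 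Throughout, the principal technical care lies in justifying the singularity- and coefficient-matching steps $\kappa$-a.e.\ rather than pointwise, and in verifying that the uniqueness theorem of Section \ref{s:uniqueness} applies under the properness and minimality hypotheses in force.
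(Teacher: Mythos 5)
Your proof of the main equivalence follows the same strategy as the paper's: reduce time reversibility to the statement that $\{f_t\}$ and $\{f_{-t}\}$ are two minimal representations of the same ID process, invoke Proposition \ref{p:theorem_2.17_KabluchkoStoev} to obtain a $\kappa$-preserving map $\Phi$ with $f_{-t}=f_t\circ\Phi$, identify $\Phi(s,\mathbf z)=(-s,M_+^{-1}M_-\mathbf z)$, and read off \eqref{e:M-^(-1)*M+*z=M+^(-1)*M-*z} and \eqref{e:mu(matrix*dx)=mu(dz)}; the sufficiency direction is the same change-of-variables computation in the characteristic function. Where you genuinely differ is in the identification step: you first pin down $\Phi_1(s,\mathbf z)=-s$ by matching the location of the kernel's singular point in $t$ (using that $u\mapsto u_\pm^{D}v$, $v\neq 0$, cannot be analytic at $u=0$ since $\Re\,\textnormal{eig}(D)\subseteq(-1/2,1/2)\setminus\{0\}$), and only then match the coefficients of $(t+s)_\pm^{D}$ to get $\Phi_2$; the paper proceeds in the opposite order, sending $t_n\to\pm\infty$ to force the constant $C=\mathbf 0$ and obtain $M_\mp\mathbf z=M_\pm\Phi_2$, and then deducing $\Phi_1=-s$ from the fact that $1\in\textnormal{eig}\big((\tfrac{t_n-s_0}{t_n+\Phi_1})^{D}\big)$ forces the ratio to equal $1$. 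Both rigidity arguments work and deliver the same conclusion; yours is local, the paper's asymptotic.

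There is, however, a concrete flaw in your justification of $(a')\Rightarrow(a)$. You propose to choose $t$ for which $g_t(-s)$ is invertible, "possible \ldots since $g_t(-s)$ is dominated by an invertible term as $t\to+\infty$." This domination claim fails when $D$ has eigenvalues with real parts of both signs: take $p=2$, $D=\textnormal{diag}(d,-d)$ with $d\in(0,1/2)$, $M_-=I$, $M_+=\begin{pmatrix}0&1\\1&0\end{pmatrix}$ (so \eqref{e:Re_eig(H)=(0,1)-(1/2)} and \eqref{e;M+,M-_in_GL(p,R)} hold); then for $s<0$ and every $t>-s$,
\begin{equation*}
\det g_t(-s)=\det\big((t+s)^{D}M_+-(-s)^{D}M_-\big)=\det\big((t+s)^{D}M_+M_-^{-1}-(-s)^{D}\big)\det M_-=0,
\end{equation*}
so $g_t(-s)$ is singular for \emph{all} large $t$. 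The fact you need is nonetheless true and easily recovered: for $s<0$ and any rational $t\in(0,-s)$ one has $g_t(-s)=\{(-(t+s))^{D}-(-s)^{D}\}M_-$, which is invertible because $1\notin\textnormal{eig}\big((\tfrac{t+s}{s})^{D}\big)$ under \eqref{e:Re_eig(H)=(0,1)-(1/2)} (and symmetrically for $s>0$); alternatively, argue as the paper does, isolating the $t$-dependent term in $(a')$ and letting $t\to\infty$ to get $(\tfrac{t+s}{t-s})^{D}M_+M_-^{-1}M_+\mathbf z=M_-\mathbf z$ $\mu(d\mathbf z)$--a.e., whence \eqref{e:M-^(-1)*M+*z=M+^(-1)*M-*z}. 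With that step repaired, your argument is sound.
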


\begin{example}\label{ex:time_revers_ofBm_vs_maofLm}
It is illustrative to compare the conditions for time reversibility for ofBm and maofLm. For the former, time reversibility (i.e., expression \eqref{e:time_revers_ofBm}) is equivalent, in the time domain, to the parametric condition
\begin{equation}\label{e:ofBm_time_revers_time}
\cos\Big(\frac{\pi D}{2}\Big)(M_+ + M_-)(M^*_+ - M^*_-)\sin\Big(\frac{\pi D^*}{2}\Big)
= \sin\Big(\frac{\pi D}{2}\Big)(M_+ - M_-)(M^*_+ + M^*_-)\cos\Big(\frac{\pi D^*}{2}\Big)
\end{equation}
(Didier and Pipiras \cite{didier:pipiras:2011}, Corollary 5.1), where $D = H-(1/2)I$ and the cosine and sine of matrices are interpreted in the sense of primary matrix functions (Horn and Johnson \cite{horn:johnson:1991}).

So, suppose the conditions used in Theorem \ref{t:maofLm_time-reversibility} hold; namely, suppose \eqref{e:Re_eig(H)=(0,1)-(1/2)}, \eqref{e;M+,M-_in_GL(p,R)}, and that $X_H$ is time-reversible.  In addition, assume the L\'{e}vy measure $\mu$ satisfies the second moment condition \eqref{e:int_zz*=I} -- otherwise, ofBm and maofLm  have incompatible parameterizations (cf.\ Theorem \ref{t:rhofLm_maofLm_integ_repres_chf}, $(iv)$). Based on a change of variable ${\mathbf w} = M^{-1}_+ M_- {\mathbf z}$, using \eqref{e:mu(matrix*dx)=mu(dz)},
$$
(M^{-1}_+ M_-) \Big(\int_{\bbR^p}{\mathbf z}{\mathbf z}^* \mu(d{\mathbf z})\Big)(M^*_- (M^*_+)^{-1}) =\int_{\bbR^p}{\mathbf w}{\mathbf w}^* \mu(d{\mathbf w}) = I.
$$
Hence, $M_+ M^*_+ = M_- M^*_-$. Also, under \eqref{e:int_zz*=I}, \eqref{e:M-^(-1)*M+*z=M+^(-1)*M-*z} implies $M^{-1}_- M_+= M^{-1}_+ M_- $. As a consequence,
$$
(M_+ + M_-)(M^*_+ - M^*_-) = \mathbf 0=  -(M_- + M_+)(M^*_+ - M^*_-),
$$
 which in turn implies condition \eqref{e:ofBm_time_revers_time}.   Conversely, we may pick $M_+,M_-$ satisfying \eqref{e:ofBm_time_revers_time} but for which $(M_+ + M_-)(M^*_+ - M^*_-) \neq {\mathbf 0}$. In other words, among the instances of maofLm satisfying \eqref{e:int_|z|^2_mu(dz)<infty_in_R}, the conditions for time reversibility of maofLm as established in Theorem \ref{t:maofLm_time-reversibility} are more stringent than those for the time reversibility of ofBm.
\end{example}
\begin{example}\label{ex:gaussian_jumps_symmetry}  Let $X_H$ be a time-reversible maofLm. Assume $\Sigma := \int_{\R^p} \mathbf z \mathbf z^* \mu(d\mathbf z)$ has full rank. A simple calculation shows that there exists a symmetric orthogonal matrix $O$ such that
\begin{equation}\label{e:symmetric_orthog_condition}
M_+^{-1} M_-=M_-^{-1} M_+ = \Sigma^{1/2} O\Sigma^{-1/2}
\end{equation}
(Lemma \ref{l:gaussian_jumps_symmetry}). In other words, \eqref{e:symmetric_orthog_condition} is a necessary condition for time reversibility. In light of Theorem \ref{t:maofLm_time-reversibility}, this implies the following.
\begin{itemize}
\item[$(i)$] If $p=1$, $X_H$ is time-reversible if and only if either $(a)$ $M_+=M_-$; or $(b)$ $M_+=-M_-$ and $\mu$ is symmetric (i.e., $\mu(-dz) = \mu(dz)$). (Indeed, relation \eqref{e:symmetric_orthog_condition} implies necessity, whereas sufficiency follows from both conditions \eqref{e:mu(matrix*dx)=mu(dz)} and \eqref{e:M-^(-1)*M+*z=M+^(-1)*M-*z}).
\item[$(ii)$] If $\mu$ is a full, zero-mean Gaussian measure on $\R^p$, then it is characterized by the matrix $\Sigma$. Therefore, $X_H$ is reversible if and only if \eqref{e:symmetric_orthog_condition} holds.
\end{itemize}
\end{example}

Turning to the Fourier domain, let
\begin{equation}\label{r:rhofLm_change_Levy_repres}
\widetilde{X}_H = \{\widetilde{X}_H(t) \}_{t \in \bbR}
\end{equation}
be a rhofLm with kernel $f_t({\boldsymbol \varpi}) = \widetilde{g}_{t}(x){\mathbf z}$ and measure \eqref{e:kappa(d_omega)_rigidity} given by $\kappa(d{\boldsymbol \varpi}) = dx \otimes \mu(d{\mathbf z}) \equiv \mu(d{\mathbf z}) \otimes dx$. Note that $\widetilde{X}_H$ can also be represented based on the measure
\begin{equation}\label{e:kappa-tilde=dx_x_mu(dz)}
\widetilde \kappa(d\boldsymbol \varpi) =  dx \otimes \widetilde \mu (d\mathbf z),
\end{equation}
where
\begin{equation}\label{e:mu-tilde}
\widetilde \mu(d\mathbf z) = \frac{\mu(d\mathbf z) +\mu(\overline{d\mathbf z}) }{2}
\end{equation}
(see Lemma \ref{l:rhofLm_change_Levy_repres}).
This fact is used in the following theorem, where we characterize time reversibility for rhofLm. For comments on the minimality assumption, see Remark \ref{r:minimality}.
\begin{theorem}\label{t:rhofLm_time-revers_general}
Let $H$ be a (Hurst) matrix satisfying \eqref{e:0<Re(h)<1}. Let $\widetilde{X}_H = \{\widetilde{X}_H(t)\}_{t \in \bbR}$ be a rhofLm with Hurst matrix $H$. Assume
$$
\{f_t({\boldsymbol \varpi})\}_{t \in \bbR}= \{\widetilde{g}_t(x){\mathbf z}\}_{t \in \bbR}\textnormal{ is a minimal representation}
$$
\begin{equation}\label{e:gt_is_minimal_for_X-tilde_general}
\textnormal{of $\{\widetilde{X}_H(t)\}_{t \in \bbR}$ with respect to }{\mathcal B}(\bbR\times \bbR^{2p}) \mod {\widetilde \kappa},
\end{equation}
where ${\widetilde \kappa}$ is a measure of the form \eqref{e:kappa-tilde=dx_x_mu(dz)} and we identify $\mu_{\C^p}\equiv\mu_{\R^{2p}}$ (see \eqref{e:mu_Cp=muR2p}). Further assume
\begin{equation}\label{e:A_invertible_mu(dz)_ae}
 A,\overline A \in GL(p,\C).
\end{equation}
Then, $\widetilde X_H$ is time reversible if and only if the map $\mathbf z\mapsto  - A^{-1} \overline A  \mathbf z$ preserves the measure $\widetilde \mu$ as in \eqref{e:kappa-tilde=dx_x_mu(dz)}, i.e.,
\begin{equation}\label{e:mu(A^(-1)bar(Az))=mu(z)}
\widetilde{\mu}\big( -\overline A^{-1} A \hspace{0.5mm}d \mathbf z\big)= \widetilde{\mu}(d\mathbf z).
\end{equation}
\end{theorem}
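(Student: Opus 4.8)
The plan is to characterize time reversibility of $\widetilde{X}_H$ by reducing the statement $\{\widetilde{X}_H(-t)\}_{t} \stackrel{\textnormal{f.d.d.}}{=} \{\widetilde{X}_H(t)\}_{t}$ to an equivalent statement about the integrands, via the uniqueness results for ID stochastic integral representations developed in Section \ref{s:uniqueness}. The key observation is that $\widetilde{X}_H(-t)$ admits a stochastic integral representation with a transformed kernel. Using the fact that, by the harmonizable structure \eqref{e:fLm_filter_harm}, the time-reversed kernel satisfies a relation of the form $\widetilde{g}_{-t}(x) = \overline{\widetilde{g}_t(x)}$ (equivalently $\widetilde{g}_{-t}(x) = \widetilde{g}_t(-x)$ after using the Hermitian symmetry), I would rewrite $\{\widetilde{X}_H(-t)\}_t$ as a stochastic integral against $\widetilde{N}$ with a new kernel, and then express that new kernel in the canonical form $\widetilde{g}_t(x)\,\mathbf{z}'$ for an appropriately transformed L\'evy variable $\mathbf{z}' = -A^{-1}\overline{A}\,\mathbf{z}$ (the factor $-A^{-1}\overline{A}$ arising precisely from inverting the $x^{-D}_+ A + x^{-D}_- \overline{A}$ structure under $x \mapsto -x$ and conjugation, which swaps the roles of the $+$ and $-$ branches).

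\textbf{The main steps, in order.} First I would establish the alternative representation of $\widetilde{X}_H$ with respect to the symmetrized measure $\widetilde{\mu}$ of \eqref{e:mu-tilde}, invoking Lemma \ref{l:rhofLm_change_Levy_repres}; this is needed because time reversal naturally intertwines $\mathbf{z}$ and $\overline{\mathbf{z}}$, and $\widetilde{\mu}$ is the measure under which the resulting kernel identity is clean. Second, I would compute the kernel of the reversed process $\widetilde{X}_H(-t)$ explicitly and manipulate it, using the invertibility assumption \eqref{e:A_invertible_mu(dz)_ae}, into the form $\widetilde{g}_t(x)\,(-\overline{A}^{-1}A\,\mathbf{z})$ up to the Hermitian symmetry of $L^2_{\textnormal{Herm}}(\bbR)$. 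Third, with both $\widetilde{X}_H$ and $\widetilde{X}_H(-t)$ now written as integrals of the \emph{same} family of kernels $\{\widetilde{g}_t\}$ against compensated Poisson measures governed by $\widetilde{\mu}$ and by its pushforward under $\mathbf{z}\mapsto -\overline{A}^{-1}A\,\mathbf{z}$ respectively, I would appeal to the uniqueness theorem from Section \ref{s:uniqueness} — the analog for ID integrals of covariance Fourier inversion — under the minimality hypothesis \eqref{e:gt_is_minimal_for_X-tilde_general}. Minimality guarantees that two such representations induce equal finite-dimensional distributions if and only if the two underlying control measures on $\overline{\Omega}$ agree, which forces \eqref{e:mu(A^(-1)bar(Az))=mu(z)}.

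\textbf{The hard part} will be the forward (necessity) direction: showing that time reversibility \emph{implies} the measure-preservation condition. Sufficiency is comparatively direct, since if $\mathbf{z}\mapsto -\overline{A}^{-1}A\,\mathbf{z}$ preserves $\widetilde{\mu}$ then the change of variables in the characteristic function \eqref{e:chf_harm_gt} immediately matches the reversed process to the original. For necessity, the obstacle is that equality in finite-dimensional distribution does not by itself pin down the integrand or the measure — there can be many representations of a given ID process. This is exactly where the minimality machinery is essential: I would verify that both the forward and reversed kernels generate the same $\sigma$-algebra $\mod \widetilde{\kappa}$ and that minimality transfers to the reversed representation (the reversal $t \mapsto -t$ is a bijection of the index set, and the kernel transformation is $\widetilde{\kappa}$-measurably invertible thanks to \eqref{e:A_invertible_mu(dz)_ae}), so that the uniqueness result of Kabluchko and Stoev applies and yields the rigidity needed to extract \eqref{e:mu(A^(-1)bar(Az))=mu(z)}. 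A secondary technical point to handle carefully is the identification $\mu_{\C^p} \equiv \mu_{\R^{2p}}$ of \eqref{e:mu_Cp=muR2p}, ensuring that the complex conjugation underlying $-\overline{A}^{-1}A$ corresponds correctly to the real-linear map on $\bbR^{2p}$ under which the measure-preservation condition is actually verified.
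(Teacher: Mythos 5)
Your overall architecture matches the paper's: symmetrize the L\'evy measure via Lemma \ref{l:rhofLm_change_Levy_repres} and work with $\widetilde\kappa$ as in \eqref{e:kappa-tilde=dx_x_mu(dz)}; prove sufficiency by a change of variables in the characteristic function \eqref{e:chf_harm}; prove necessity through the minimality/uniqueness machinery of Proposition \ref{p:theorem_2.17_KabluchkoStoev}. The sufficiency half of your sketch is essentially sound (modulo the fact that your quoted kernel identities $\widetilde g_{-t}(x)=\overline{\widetilde g_t(x)}$ and $\widetilde g_{-t}(x)=\widetilde g_t(-x)$ are both false as stated: under $x\mapsto -x$ and conjugation the prefactor changes sign and the roles of $A$ and $\overline A$ swap, so the correct identity is $\Re(\widetilde g_t(x)\mathbf z)=\Re\big(\widetilde g_{-t}(-x)\mathbf z'\big)$ with $\mathbf z'=-A^{-1}\overline A\,\mathbf z$ on $\{x<0\}$ and $\mathbf z'=-\overline A^{-1}A\,\mathbf z$ on $\{x>0\}$ --- the sign-dependence matters and is why the paper's map $\Psi$ carries indicator functions).

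The genuine gap is in the necessity direction, precisely where you write that minimality ``guarantees that two such representations induce equal finite-dimensional distributions if and only if the two underlying control measures agree, which forces \eqref{e:mu(A^(-1)bar(Az))=mu(z)}.'' That is not what Proposition \ref{p:theorem_2.17_KabluchkoStoev} (or Kabluchko--Stoev) gives: applied to the two minimal representations it yields only the \emph{existence} of a measure-space isomorphism $\Phi=(\Phi_1,\Phi_2)$ of $(\bbR\times\bbR^{2p},\widetilde\kappa)$ intertwining the kernels a.e., and a priori $\Phi$ could be a complicated point map. The entire analytic core of the paper's proof is to identify $\Phi$: one extends the a.e.\ kernel identity from rational to all $t$ by continuity, differentiates in $t$, compares the derivatives of orders $k=0$ and $k=4$ to force $\Phi_1(x,\mathbf z)=\pm x$, and then identifies $\Phi_2(x,\mathbf z)$ as $-A^{-1}\overline{A\mathbf z}$ or $-\overline A^{-1}A\mathbf z$ according to the sign, using \eqref{e:A_invertible_mu(dz)_ae}; finally one decomposes $\Phi$ into $\Psi$ and $\Psi\circ\gamma$ with $\gamma(x,\mathbf z)=(-x,\overline{\mathbf z})$, and only because $\gamma$ preserves $\widetilde\kappa$ (which is exactly where the symmetrization \eqref{e:mu-tilde} is needed) can one convert $\widetilde\kappa=\widetilde\kappa\circ\Phi^{-1}$ into \eqref{e:mu(A^(-1)bar(Az))=mu(z)}. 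Your route of representing the reversed process with the same kernels against the pushforward control measure and then concluding the measures must coincide runs into the same obstruction: the resulting intertwiner satisfies $f_t\circ\Phi=f_t$ a.e., but the family $\{f_t\}$ does \emph{not} separate points of $\bbR\times\bbR^{2p}$ (it is invariant under composing the involution $\Psi$ with time reversal), so ``$\Phi$ is essentially the identity'' cannot be asserted without the pointwise case analysis above. As written, the step from minimality to \eqref{e:mu(A^(-1)bar(Az))=mu(z)} is therefore unjustified, and it is exactly the hard part of the theorem.
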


\begin{example}
Under the assumptions of Theorem \ref{t:rhofLm_time-revers_general}, a sufficient condition for $\widetilde{X}_H$ to be time-reversible is that $\Re(A) = {\mathbf 0}$. In fact, in this case, $\overline A^{-1} A = -I$; hence, condition \eqref{e:mu(A^(-1)bar(Az))=mu(z)} holds.
\end{example}

\begin{example}\label{ex:time_revers_ofBm_vs_rhofLm}As in Example \ref{ex:time_revers_ofBm_vs_maofLm}, we now compare the conditions for time reversibility of ofBm to those for rhofLm. For the former, time reversibility (i.e., expression \eqref{e:time_revers_ofBm}) is equivalent, in the Fourier domain, to the parametric condition
\begin{equation}\label{e:ofBm_time_revers_Fourier}
AA^* = \overline{AA^*}
\end{equation}
(Didier and Pipiras \cite{didier:pipiras:2011}, Theorem 5.1).

So, suppose the conditions used in Theorem \ref{t:rhofLm_time-revers_general} hold; namely, suppose conditions \eqref{e:0<Re(h)<1}, \eqref{e:A_invertible_mu(dz)_ae} and \eqref{e:gt_is_minimal_for_X-tilde_general} are satisfied. In addition, assume the L\'{e}vy measure $\mu$ satisfies the second moment condition \eqref{e:int_RezRez*=sigma^2_I=int_ImzImz*}, so as to ensure ofBm and rhofLm have the same covariance structure and compatible parametrizations (cf.\ Theorem \ref{t:rhofLm_maofLm_integ_repres_chf}, $(iv)$). Then, for ${\mathbf z} = {\mathbf z}_1 + \imag {\mathbf z}_2$,
\begin{equation}
\int_{\bbC^p} {\mathbf z}{\mathbf z}^* \widetilde \mu(d{\mathbf z}) = (1/2)I + \imag \int_{\bbC^p} ({\mathbf z}_2{\mathbf z}^*_1 - {\mathbf z}_1{\mathbf z}^*_2) \widetilde \mu(d{\mathbf z}) = (1/2)I,
\end{equation}
where the last equality is a consequence of the general property $\widetilde \mu(d\mathbf z) = \widetilde\mu(\overline{d\mathbf z})$. Thus, assuming time reversibility, based on a change of variable $\mathbf w=  -\overline A^{-1} A  \mathbf z$, condition \eqref{e:mu(A^(-1)bar(Az))=mu(z)} implies that
\begin{equation}\label{e:int_C^p_zz*_mu(dz)}
\big( \overline A^{-1} A \big) \int_{\bbC^p} {\mathbf z}{\mathbf z}^* \widetilde{\mu}(d{\mathbf z})\big( \overline A^{-1} A \big)^* =\int_{\C^p}{\mathbf w}{\mathbf w}^* \widetilde{\mu}(d{\mathbf w}) = (1/2)I.
\end{equation}
Hence, $\big( \overline A^{-1} A \big) \big( \overline A^{-1} A \big)^*=I$, which implies condition \eqref{e:ofBm_time_revers_Fourier}. In regard to the converse, however, by choosing $A,A^*$ satisfying \eqref{e:ofBm_time_revers_Fourier}, we may easily find a L\'{e}vy measure $\mu$ under condition \eqref{e:int_RezRez*=sigma^2_I=int_ImzImz*} such that \eqref{e:mu(A^(-1)bar(Az))=mu(z)} is not satisfied. In other words, among the instances of rhofLm satisfying \eqref{e:int_RezRez*=sigma^2_I=int_ImzImz*}, the conditions for time reversibility of rhofLm as established in Theorem \ref{t:rhofLm_time-revers_general} are stronger than those for the time reversibility of ofBm.
\end{example}

\begin{example}\label{e:p=1_harm} Write $\Sigma= \int_{\bbC^p} {\mathbf z}{\mathbf z}^* \widetilde \mu(d{\mathbf z})$, and suppose $\Sigma$ has full rank.  Reasoning similarly to Example \ref{ex:gaussian_jumps_symmetry}, a necessary condition for the time reversibility of $\widetilde X_H$ is that
\begin{equation}\label{e:unitary_condition}
A^{-1}\overline A = \Sigma^{1/2}U\Sigma^{-1/2},
\end{equation}
where $U\in M(p,\C)$ is some unitary matrix. This implies the following.
\begin{itemize}
\item[$(i)$] If $p=1$, $\widetilde X_H$ is time-reversible if and only if $\widetilde \mu(-dz)=\widetilde \mu(e^{\imag 2\theta} dz)$, where $\theta = \text{arg}\hspace{0.5mm} A$.
\item[$(ii)$] If $\mu$ is a zero-mean Gaussian measure on $\C^p$ satisfying $\mu(d\mathbf z) = \mu (e^{\imag \theta} d\mathbf z)$, $\theta\in(-\pi,\pi]$, then $\widetilde X_H$ is reversible if and only if \eqref{e:unitary_condition} holds. (Indeed, in this case $\widetilde \mu = \mu$, and $\mu$ is completely determined by $\Sigma$, showing \eqref{e:unitary_condition} holds if and only if \eqref{e:mu(A^(-1)bar(Az))=mu(z)} holds by taking second moments.)
\end{itemize}
\end{example}

\begin{remark}\label{r:minimality}
The minimality of a representation $\{f_t\}_{t \in T}$ can always be enforced by replacing $\overline{\Omega}$ with $\textnormal{supp}\{f_t, t \in T\}$ and by choosing ${\mathcal B} = \sigma\{f_t,t \in T\}$. However, establishing the minimality of a representation based on a given Borel space such as $(\bbR^{q+1},{\mathcal B}(\bbR^{q+1}))$ is not, in general, straightforward (see Definition \ref{def:isomorphism_Borel_space}; cf.\ Kabluchko and Stoev \cite{kabluchko:stoev:2016}, Remark 2.18).  It can be shown by elementary -- though long and tedious -- arguments, that minimality over $(\R^{q+1}, {\mathcal B}(\R^{q+1}))$ naturally holds for some simple cases such as when $\mu$ is a point mass. It remains an open question as to when it holds in general.
\end{remark}

\section{Conclusion}

In this paper, we construct ofLm, a broad class of generally non-Gaussian stochastic processes that are covariance operator self-similar, have wide-sense stationary increments and display infinitely divisible marginal distributions. The ofLm class generalizes the univariate fractional L\'evy motion as well as the multivariate ofBm. The ofLm class can be divided into two types, namely, maofLm (moving average) and rhofLm (real harmonizable), both of which share the covariance structure of ofBm, under assumptions. We show that both maofLm and rhofLm admit stochastic integral representations in the time and Fourier domains. Though never o.s.s., the small- and large-scale limiting behaviors of maofLm and rhofLm are generally distinct. This stands in sharp contrast with the Gaussian case, where moving average and harmonizable stochastic integrals are representations of the same stochastic process. We characterize time reversibility for ofLm in terms of its parameters and L\'evy measure, starting from a framework for the uniqueness of finite second moment, multivariate stochastic integral representations. In particular, we show that, under non-Gaussianity, the parametric conditions for time reversibility are generally more restrictive those in the Gaussian case (ofBm).

This work leaves a number of issues to be explored and open research questions. These include: $(i)$ efficient simulation schemes for ofLm, in particular in regards to the effect of the dimension $p$; $(ii)$ the construction of statistical methodology that accounts for the impact of the tails of L\'{e}vy noise as a measure of non-Gaussianity; $(iii)$ applications in fields such as in Physics or Signal Processing, where the presence of fractal, second order behavior is well established, but where the modeling of non-Gaussian features is still a widely open area of research.

\appendix

\section{Properties of stochastic integrals}\label{s:properties_stoch_integrals}
Before establishing the results in Section \ref{s:fLm}, we lay out a few basic facts about the general stochastic integrals defined in Section \ref{s:stoch_integrals}. We start off with the Fourier domain.
By construction, for $\varphi_1, \varphi_2  \in  {\mathcal L}^2_{ds\otimes \mu(d{\mathbf z})}$, the $\bbC^p$-valued stochastic integrals of the form \eqref{e:int_varphi_N} satisfies the isometry-type property
$$
\bbE \Big(\int_{\bbR \times \bbC^p} \varphi_1(x,{\mathbf z} ) \widetilde{N}(dx,d{\mathbf z} ) \Big) \Big(\int_{\bbR \times \bbC^p} \varphi_2(x',{\mathbf z}' ) \widetilde{N}(dx',d{\mathbf z}' )\Big)^*
$$
\begin{equation*}
= \int_{\bbR} \int_{ \bbC^p} \varphi_1(x,{\mathbf z} )\varphi_2(x,{\mathbf z} )^* \hspace{1mm}\mu(d{\mathbf z} )dx.
\end{equation*}
In particular, consider the functions
\begin{equation}\label{e:varphi(xi,z)=2Re(g(xi)z)}
\varphi_i(x,{\mathbf z} )= 2 \Re(\widetilde{g}_i(x){\mathbf z}), \quad \widetilde{g}_i \in L^2_{\textnormal{Herm}}(\bbR), \quad i = 1,2.
\end{equation}
Then, since $\widetilde{g}_i$, $i=1,2$, is Hermitian,
$$
\bbE \Big(\int_{\bbR} \widetilde{g}_1(x)\widetilde{{\mathcal M}}(dx)\Big)\Big(\int_{\bbR} \widetilde{g}_2(x')\widetilde{{\mathcal M}}(dx')\Big)^*
$$
\begin{equation}\label{e:E_int_isometry_harm_f(xi)_cross}
= 4 \int_{\bbR} \Re \widetilde{g}_1(x) \Big(\int_{\bbC^p} \Re {\mathbf z} \Re {\mathbf z}^* \mu(d{\mathbf z}) \Big)\Re \widetilde{g}_2(x)^* \hspace{0.5mm}d x +  4 \int_{\bbR} \Im \widetilde{g}_1(x) \Big(\int_{\bbC^p} \Im {\mathbf z} \Im {\mathbf z}^* \mu(d{\mathbf z}) \Big)\Im \widetilde{g}_2(x)^* \hspace{0.5mm}d x.
\end{equation}
Moreover, the joint characteristic function of the real and imaginary parts of the $\bbC^p$-valued stochastic integral $\int \varphi d\widetilde{N}$, $\varphi \in {\mathcal L}^2_{dx\otimes \mu(d{\mathbf z})}$, is given by
$$
\bbE \exp \Big\{ \imag \Big({\mathbf u}^*_1 \int \Re(\varphi)d\widetilde{N}+ {\mathbf u}^*_2 \int \Im(\varphi)d\widetilde{N}\Big)\Big\}
$$
\begin{equation}\label{e:int_varphi_N_chf}
= \exp \Big\{ \int_{\bbR}\int_{ \bbC^p} \Big[ e^{\imag({\mathbf u}^*_1 \Re(\varphi)+{\mathbf u}^*_2 \Im(\varphi))}-1 -\imag({\mathbf u}^*_1 \Re(\varphi)+{\mathbf u}^*_2 \Im(\varphi))\Big]\mu(d{\mathbf z})dx\Big\},
\end{equation}
for ${\mathbf u}_1,{\mathbf u}_2 \in \bbR^p$ (Sato \cite{sato:2006}). Note that, under condition \eqref{e:int_|z|^2_mu(dz)<infty}, the integral on the right-hand side of \eqref{e:int_varphi_N_chf} is finite in view of the inequality
\begin{equation}\label{e:|exp(ix)-1-ix|=<C|x|^2}
|e^{\imag y}-1 - \imag y| \leq |y|^2, \quad y \in \bbR.
\end{equation}
In particular, for a function $\varphi(x,{\mathbf z})$ of the form \eqref{e:varphi(xi,z)=2Re(g(xi)z)},
\begin{equation}
\bbE \exp \Big\{ \imag \Big({\mathbf u}^* \int_{\bbR \times \bbC^p} \varphi(x,{\mathbf z}) \widetilde{N}(d x, d {\mathbf z})\Big) \Big\}
= \exp \Big\{ \int_{\bbR } \widetilde{\psi}(\widetilde{g}(x)^*{\mathbf u}) dx\Big\}, \quad {\mathbf u} \in \bbR^p,
\end{equation}
where
\begin{equation}\label{e:int_varphi=2Re(gz)_chf_Levy_symbol}
\widetilde{\psi}({\mathbf v}) = \int_{\bbC^p} \Big( e^{\imag 2 \Re \langle {\mathbf v},{\mathbf z}\rangle  } -1 -\imag 2 \Re \langle {\mathbf v},{\mathbf z}\rangle \Big) \mu(d{\mathbf z}) , \quad {\mathbf v} \in \bbC^p,
\end{equation}
and $\langle {\mathbf v},{\mathbf z}\rangle := {\mathbf v}^*{\mathbf z}$.  Equivalently, if we regard $\mu(d\mathbf z)=\mu_{\R^{2p}}(d\mathbf z)$ as a measure on ${\mathcal B}(\R^{2p})$, identifying each $\mathbf z_1 + \imag \mathbf z_2 \in \bbC^p$ with $(\mathbf z_1,\mathbf z_2)\in \bbR^{2p}$, then for $\mathbf z, \mathbf v \in \bbR^{2p}$, we may write
\begin{equation}
\widetilde{\psi}({\mathbf v}) = \int_{\bbR^{2p}}  \Big( e^{\imag 2 \langle \mathbf v, \mathbf z\rangle } -1 -\imag 2  \langle \mathbf v, \mathbf z\rangle  )\Big) \mu_{\R^{2p}}(d{\mathbf z}), \ {\mathbf v}\in \bbR^{2p}.
\end{equation}

Given the stochastic integral \eqref{e:X-tilde(t)}, for every $n$, any ${\mathbf t} = (t_1,\hdots,t_n) \in \bbR^n$ and any ${\mathbf u}_1,\hdots,{\mathbf u}_n \in \bbR^p$, the characteristic function of the finite-dimensional distributions of the $\bbR^p$-valued stochastic process $\widetilde{X}$ is given by
$$
\bbE \exp \Big\{\imag \sum^{n}_{k=1}\langle {\mathbf u}_k, \widetilde{X}(t_k) \rangle\Big\}
$$
\begin{equation}\label{e:chf_harm}
= \exp\Big\{ \int_{\bbR} \int_{\bbC^p}\Big[e^{\imag \hspace{0.25mm}2 \hspace{0.25mm}\Re \big( \sum^{n}_{k=1}{\mathbf u}^*_k \hspace{0.5mm}\widetilde{g}_{t_k}(x){\mathbf z} \big)}- 1 -
\imag \hspace{0.25mm}2\hspace{0.25mm} \Re \Big( \sum^{n}_{k=1}{\mathbf u}^*_k \hspace{0.5mm}\widetilde{g}_{t_k}(x){\mathbf z} \Big)\Big]\mu(d{\mathbf z}) dx \Big\}.
\end{equation}
In particular, the random vectors $\widetilde{X}(t)$, $t \in \bbR$, are ID (cf.\ Samorodnitsky \cite{samorodnitsky:2016}, Theorem 3.3.2, $(ii)$).

Turning to the time domain, let $\varphi_i(s,{\mathbf z})= g_i(s){\mathbf z}$, $i = 1,2$. By construction, the stochastic integral \eqref{e:int_f(s)M(ds)} satisfies the isometry property
\begin{equation}\label{e:E_int_isometry}
\bbE \Big(\int_{\bbR} g_1(s){\mathcal M}(ds)\Big)\Big(\int_{\bbR} g_2(s'){\mathcal M}(ds')\Big)^* = \int_{\bbR} g_1(s)\Big(\int_{\bbR^{p}} {\mathbf z}{\mathbf z}^* \hspace{0.5mm}\mu(d {\mathbf z}) \Big)g_2(s)^* \hspace{0.5mm}ds,
\end{equation}
where $\tr(\int_{\bbR^{p}} {\mathbf z}{\mathbf z}^* \hspace{0.5mm}\mu(d {\mathbf z})) < \infty$. Moreover, for $\varphi(s,{\mathbf z}) \in {\mathcal L}^2_{ds \otimes \mu(d{\mathbf z})}$, the characteristic function of the $\bbR^p$-valued stochastic integral $\int \varphi(s,{\mathbf z}) \widetilde{N}(ds,d{\mathbf z})$ is given by
$$
\bbE e^{\imag {\mathbf u}^* \int_{\bbR^{p+1}} \varphi(s,{\mathbf z}) \widetilde{N}(ds,d{\mathbf z})  }
$$
\begin{equation}\label{e:exp(int_psi(f)ds)_time}
= \exp \Big\{\int_{\bbR} \int_{\bbR^p} \Big(e^{\imag  \langle {\mathbf u}, \varphi(s,{\mathbf z})\rangle }-1-\imag \langle {\mathbf u}, \varphi(s,{\mathbf z})\rangle\Big)\hspace{0.5mm}\mu(d{\mathbf z})ds \Big\}, \quad {\mathbf u} \in \bbR^p.
\end{equation}
Under condition \eqref{e:int_|z|^2_mu(dz)<infty} (restricted to $\bbR^p$), the integral on the right-hand side of \eqref{e:exp(int_psi(f)ds)_time} is convergent in view of the inequality \eqref{e:|exp(ix)-1-ix|=<C|x|^2}.

Given expression \eqref{e:Y=int_f(t,s)L(ds)_change}, for any $n$, the joint characteristic function of the stochastic process $X = \{X(t)\}_{t \in \bbR}$ at the time points ${\mathbf t} = (t_1,\hdots,t_n)$ is given by
$$
\bbE e^{\imag \sum^{n}_{k=1}\langle{\mathbf u}_k, X(t_k)\rangle} = \exp \Big\{ \int_{\bbR} \int_{\bbR^p} \Big(e^{\imag \sum^{n}_{k=1} \langle{\mathbf u}_k, g_{t_k}(s){\mathbf z}\rangle }-1-\imag \sum^{n}_{k=1}\langle{\mathbf u}_k, g_{t_k}(s){\mathbf z}\rangle\Big)\hspace{0.5mm}\mu(d{\mathbf z})ds \Big\}
$$
\begin{equation}\label{e:exp(int_psi(f)ds)}
= \exp \Big\{\int_{\bbR} \psi\Big(\sum^{n}_{k=1} g_{t_k}(s)^*{\mathbf u}_k \Big) ds \Big\}, \quad {\mathbf u}_1,\hdots,{\mathbf u}_n \in \bbR^p.
\end{equation}
In \eqref{e:exp(int_psi(f)ds)}, the L\'{e}vy symbol $\psi$  can be expressed as
\begin{equation}\label{e:Levy_symbol_finite_2nd_moment}
\psi({\mathbf u}) = \int_{\bbR^p} (e^{\imag \langle{\mathbf u}, {\mathbf z}\rangle}-1-\imag \langle{\mathbf u}, {\mathbf z}\rangle )\hspace{0.5mm}\mu(d {\mathbf z}),\quad {\mathbf u} \in \bbR^p.
\end{equation}
In particular, the random vectors $X(t)$, $t \in \bbR$, are ID (cf.\ Samorodnitsky \cite{samorodnitsky:2016}, Theorem 3.3.2, $(ii)$).
\begin{example}\label{ex:b-tilde}
For the reader's convenience, we explicitly show how $\widetilde{\mathcal M}(dx)$ and ${\mathcal M}(ds)$ can be constructed to encompass both Gaussian and purely non-Gaussian instances of ofLm. For rhofLm, consider a $\C^{p}$-valued independently scattered ID random measure on $\bbR$ given by
$$
{\mathfrak M(dx) := \mathfrak B(dx) + \int_{\C^{p}} \mathbf z \widetilde N(dx, d\mathbf z)} =  \Re(\mathfrak M(dx)) + \imag \Im(\mathfrak M(dx))=:{\mathcal M}^{(1)}(dx) + \imag {\mathcal M}^{(2)}(dx),
$$
where $\mathfrak B(dx):=\mathfrak B^{(1)}(dx)+ \imag \mathfrak B^{(2)}(dx)$ is independent of $\widetilde N( dx, d\mathbf z)$ and $\mathfrak B^{(\ell)}(dx)$, $\ell=1,2$, are $\R^p$-valued independently-scattered Gaussian random measures on $\bbR$. We may then define
$$
\widetilde {\mathcal M}(dx) = \big(\widetilde{\mathcal M}^{(1)}(dx) + \widetilde{\mathcal M}^{(1)}(-dx)\big) + \imag \big(\widetilde{\mathcal M}^{(2)}(dx) - \widetilde{\mathcal M}^{(2)}(-dx)\big)
$$
(n.b.: $\widetilde{\mathcal M}(dx)$ is not independently scattered).  When $\widetilde N(ds,d\mathbf z)$ is absent ($\mathfrak M(dx)\equiv \mathfrak B(dx)$), the resulting measure $\widetilde{\mathcal M}(dx) =:\widetilde B(dx)$ is Gaussian. Assuming properness, the corresponding process \eqref{e:X-tildeH(t)} is an ofBm. Without loss of generality, we may further assume $\mathfrak B^{(1)}(dx),\mathfrak B^{(2)}(dx)$ are independent and take $\bbE \widetilde B(dx) \widetilde B(dx)^* = dx\times I$ (note that a slightly different construction of $\widetilde{ B}(dx)$ -- but one that is equivalent for representing ofBm -- is given in Didier and Pipiras \cite{didier:pipiras:2011}; see also Samorodnitsky and Taqqu \cite{samorodnitsky:taqqu:1994}, Section 7.2.2). When $\mathfrak M(dx)$ is purely non-Gaussian $(\mathfrak B(dx)\equiv \mathbf 0)$, one recovers rhofLm as in Definition \ref{def:rhofLm}.

To encompass both Gaussian and non-Gaussian instances of maofLm, one can take $\mathcal M(ds) = B(ds) + \int_{\bbR^p} \mathbf z \widetilde N(ds,d\mathbf z)$, where $B(ds)$ is an $\bbR^p$-valued independently scattered Gaussian random measure on $\bbR$ independent of $N(ds,d\mathbf z)$, without loss of generality satisfying $\E B(ds) B(ds)^* = ds\times I$. In this case, when $\widetilde N$ absent from $\mathcal M$, assuming properness, the corresponding process \eqref{e:XH(t)} is an ofBm.
\end{example}

\section{Proofs: Section \ref{s:fLm}}

\noindent {\sc  Proof of Theorem \ref{t:rhofLm_maofLm_integ_repres_chf}}: Statement $(i)$ is a consequence of the facts that $\widetilde{g}_t \in L^2_{\textnormal{Herm}}(\bbR)$, $g_t \in L^2(\bbR,M(p,\bbR))$, $t \in \bbR$ (cf.\ Didier and Pipiras \cite{didier:pipiras:2011}).

In regard to $(ii)$, it results from the dominated convergence theorem that the covariance functions of both $\widetilde{X}_H$ and $X_H$ are continuous. Therefore, both processes are stochastically continuous.

As for $(iii)$, relation \eqref{e:rhofLm_chf} is a consequence of expression \eqref{e:chf_harm} for the characteristic function of stochastic integrals of the form \eqref{e:Xtilde(t)=int_ft(omega)M(domega)}. Expression \eqref{e:chf_harm_gt} now follows from the fact that, for ${\mathbf z} = {\mathbf z}_1 + \imag {\mathbf z}_2 \in \bbC^p$, and $j = 1,\hdots,m$,
$$
\Re\Big\{\big(\Re g_{t_j}(x)+ \imag \Im g_{t_j}(x)\big){\mathbf z} \Big\}
= \Re\Big\{\big[\Re g_{t_j}(x) {\mathbf z}_1 - \Im g_{t_j}(x) {\mathbf z}_2 + \imag \hspace{0.5mm}(\Re g_{t_j}(x){\mathbf z}_2 + \Im g_{t_j}(x){\mathbf z}_1)\big]\Big\}
$$
$$
=  \Re g_{t_j}(x){\mathbf z}_1 - \Im g_{t_j}(x){\mathbf z}_2 .
$$
In turn, relation \eqref{e:mafLm_chf} is a consequence of \eqref{e:exp(int_psi(f)ds)}. Moreover, $\widetilde{X}_H$ and $X_H$ are cov.o.s.s.\ as a consequence of the isometry relations \eqref{e:E_int_isometry_harm_f(xi)_cross} and \eqref{e:E_int_isometry}, as well as of the scaling properties
\begin{equation}\label{e:gtilde_ct(x)=c^(D+I)gtilde_t(x)}
\widetilde{g}_{ct}(x) = c^{D+I} \widetilde{g}_t(cx) \quad \textnormal{a.e.}, %
\end{equation}
\begin{equation}\label{e:g_ct(cs)=c^D*g_t(s)}
g_{ct}(cs) =  {\mathcal F}^{-1} \big(c^{-1}\widetilde{g}_{ct}(c^{-1} \cdot )\big)(s) = c^{D}{\mathcal F}^{-1} \big(\widetilde{g}_{t}\big)(s)= c^{D} g_{t}(s) \quad \textnormal{a.e.},
\end{equation}
for any $c > 0$. This establishes $(iii)$.

We now turn to $(iv)$. First consider $X_H$, and let $H= P J_H P^{-1}$ be the Jordan decomposition of $H$.  Define $H' = Q H Q^{-1}$, and observe that $\Re \eig(H')\in(0,1)$. Expression \eqref{e:E_int_isometry} and Parseval-Plancherel imply $\E X_H(s) X_H(t)^* = \int_{\bbR} g_s(u) Q Q^* g_t(u)^* \hspace{0.5mm}du= \int_{\bbR} \widetilde  g_s(x)Q Q^* \widetilde g_t(x)^* \hspace{0.5mm}dx$, which is the covariance function of an ofBm with parameters $H'$ and $QA$ in its harmonizable representation. This establishes the claim.  The claim under \eqref{e:Re_eig(H)=(0,1)-(1/2)} follows, since in this case $Q=I$.  The statements for $\widetilde{X}_H$ follow similarly based on expression \eqref{e:E_int_isometry_harm_f(xi)_cross}. %

In regard to $(v)$, for any $t,h \in \bbR$,
$$
X_{H}(t+h)- X_{H}(h) =
\int_{\bbR} \big( g_{t+h}(s) - g_{h}(s)\big) {\mathcal M}(ds)
$$
$$
= \int_{\bbR} \big(  \{(t+h-s)^{D}_+ -(h-s)^{D}_+ \}M_+ + \{(t+h-s)^{D}_- -(h-s)^{D}_-\} M_-\big) {\mathcal M}(ds)
$$
\begin{equation}\label{e:XH_strictly_station_increm}
\stackrel{\textnormal{f.d.d.}}= \int_{\bbR} \big(  \{(t-s')^{D}_+ -(-s')^{D}_+ \}M_+ + \{(t-s')^{D}_- -(-s')^{D}_-\} M_-\big) {\mathcal M}(ds).
\end{equation}
In \eqref{e:XH_strictly_station_increm}, the equality of f.d.d.\ follows from a change of variable $h-s = s'$ in the characteristic function for
$$
\big( X_{H}(t+h_1)- X_{H}(h_1),\hdots,X_{H}(t+h_m)- X_{H}(h_m) \big), \quad h_1,\hdots,h_m  \in \bbR,
$$
which in turn stems from expression \eqref{e:exp(int_psi(f)ds)}. This shows that the maofLm $X_{H}$ has strict-sense stationary increments. That $\widetilde X_H$ has wide-sense stationary increments is a direct consequence of \eqref{e:E_int_isometry_harm_f(xi)_cross} and \eqref{e:E_int_isometry}.  Moreover, under condition \eqref{e:Mtilde(dxi)=e^(i*theta)Mtilde(dxi)}, for any $t,h \in \bbR$,
$$
\widetilde{X}_{H}(t+h)- \widetilde{X}_{H}(h) =
\int_{\bbR} \big( \widetilde{g}_{t+h}(x) - \widetilde{g}_{h}(x)\big) \widetilde{{\mathcal M}}(dx)
$$
$$
= \int_{\bbR} \Big( \frac{e^{\imag t  x}-1}{\imag x}\Big)\{x^{-D}_+ A+x^{-D}_- \overline{A}\} e^{\imag h x}\widetilde{{\mathcal M}}(dx)
$$
$$
\stackrel{\textnormal{f.d.d.}}= \int_{\bbR} \Big( \frac{e^{\imag t  x}-1}{\imag x}\Big)\{x^{-D}_+ A+x^{-D}_- \overline{A}\} \widetilde{{\mathcal M}}(dx) = \widetilde{X}_{H}(t).
$$
This shows that the rhofLm $\widetilde{X}_{H}$ has strict-sense stationary increments. This establishes $(v)$.

We now show $(vi)$. By means of contradiction, suppose $X_H$ is o.s.s. Therefore, relation \eqref{e:def_ss} holds for $X_H$ and some matrix $\mathcal H$. So, fix $c > 0$ and $t \neq 0$. Then,
$$
\exp\Big\{ \int_{\bbR}\int_{\bbR^p} \Big( e^{\imag {\mathbf u}^* g_{ct}(s){\mathbf z} }-1 -\imag {\mathbf u}^* g_{ct}(s){\mathbf z} \Big) \mu(d{\mathbf z}) ds \Big\}
$$
\begin{equation}\label{e:chf_s.s._by_contradiction}
= \exp\Big\{ \int_{\bbR}\int_{\bbR^p} \Big( e^{\imag {\mathbf u}^* c^{\mathcal H} g_{t}(s){\mathbf z} }-1 -\imag {\mathbf u}^* c^{\mathcal H} g_{t}(s){\mathbf z} \Big) \mu(d{\mathbf z}) ds \Big\}, \quad  {\mathbf u} \in \bbR^p.
\end{equation}
Define the measurable functions $H_1(s,{\mathbf z}) = g_{ct}(s){\mathbf z}$ and $H_2(s,{\mathbf z}) = c^{\mathcal H} g_{t}(s){\mathbf z}$, and consider the product measure $\mu \otimes \eta$. Now define the induced measures on $\R^p$ via
$$
\nu_i(d{\mathbf y})= [ H_i \hspace{1mm}(\mu \otimes \eta)](d{\mathbf y})
= (\mu \otimes \eta)[H^{-1}_i (d{\mathbf y})], \quad i=1,2.
$$
By a change of measures, we can rewrite \eqref{e:chf_s.s._by_contradiction} as
$$
\exp\Big\{ \int_{\bbR^{p}} \Big( e^{\imag \langle{\mathbf u},{\mathbf y}\rangle }-1 -\imag \langle{\mathbf u}, {\mathbf y}\rangle \Big) \nu_1(d{\mathbf y}) \Big\}
= \exp\Big\{ \int_{\bbR^{p}}\Big( e^{\imag \langle{\mathbf u},{\mathbf y}\rangle }-1 -\imag \langle{\mathbf u}, {\mathbf y} \rangle\Big) \nu_2(d{\mathbf y}) \Big\}, \quad {\mathbf u} \in \bbR^p.
$$
By the measure-theoretic convention $0 \times \infty = 0$,  we arrive at
$$
\exp\Big\{ \int_{\bbR^{p}\backslash\{{\mathbf 0}\}} \Big( e^{\imag \langle{\mathbf u},{\mathbf y}\rangle }-1 -\imag \langle{\mathbf u}, {\mathbf y}\rangle \Big) \nu_1(d{\mathbf y}) \Big\}
= \exp\Big\{ \int_{\bbR^{p}\backslash\{{\mathbf 0}\}}\Big( e^{\imag \langle{\mathbf u},{\mathbf y}\rangle }-1 -\imag \langle{\mathbf u}, {\mathbf y} \rangle\Big) \nu_2(d{\mathbf y}) \Big\}, \quad {\mathbf u} \in \bbR^p.
$$
By the uniqueness of the L\'{e}vy measure, $\nu_1(B)= \nu_2(B)$ for all $B \in \mathcal B(\R^p \setminus\{\mathbf 0\})$. Equivalently,
\begin{equation}\label{e:equality_Levy_s.s.}
\int_{\bbR}\int_{\bbR^p} 1_{B}\big(g_{ct}(s){\mathbf z}\big) \mu(d{\mathbf z})ds = \int_{\bbR}\int_{\bbR^p} 1_{B}\big(c^{\mathcal H} g_{t}(s){\mathbf z}\big) \mu(d{\mathbf z})ds, \quad B \in {\mathcal B}(\bbR^p \backslash\{{\mathbf 0}\}).
\end{equation}
Note that the kernel $g_{t}$ satisfies the scaling relation \eqref{e:g_ct(cs)=c^D*g_t(s)}.
By a change of variable $s = c w$ and \eqref{e:g_ct(cs)=c^D*g_t(s)} applied to the integral $\int_{\bbR}\int_{\bbR^p} 1_{B}\big(g_{ct}(s){\mathbf z}\big) \mu(d{\mathbf z})ds$, we can rewrite relation \eqref{e:equality_Levy_s.s.} as
\begin{equation}\label{e:nu*_scaling}
c \int_{\bbR}\int_{\bbR^p} 1_{c^{(1/2)I-H}B}\Big(g_{t}(w){\mathbf z}\Big) \mu(d{\mathbf z})dw = \int_{\bbR}\int_{\bbR^p} 1_{c^{-\mathcal H}B}\Big(g_{t}(s){\mathbf z} \Big) \mu(d{\mathbf z})ds.
\end{equation}
Let
\begin{equation}\label{e:nu*}
\nu_*(B) =\int_{\bbR}\int_{\bbR^p} 1_{B}\Big(g_{t}(s){\mathbf z} \Big) \mu(d{\mathbf z})ds, \quad B \in {\mathcal B}(\bbR^{p}\setminus\{\mathbf 0\}),
\end{equation}
and set $\nu_*(\{\mathbf 0\}):=0$. Note that
$$
\int_{\R^p} \|\mathbf y\|^2 \nu_*(d\mathbf y) =  \int_{\R}\int_{\R^p}  \|g_t(s) \mathbf z\|^2 \mu(d\mathbf z)ds\leq \int_{\R}\|g_t(s)\|^2 ds \cdot \int_{\R^p}  \| \mathbf z\|^2 \mu(d\mathbf z)<\infty.
$$
Based on $\nu_*$, we can rewrite \eqref{e:nu*_scaling} as
\begin{equation}\label{e:c_nu*(c^((1/2)I-H)B)=nu*(c^{-H}B)}
c \hspace{0.5mm}\nu_*(c^{(1/2)I-H}B) = \nu_*(c^{-\mathcal H} B).
\end{equation}
Fix $B_0 \in {\mathcal B}(\bbR^p)$. Then, for $B(c):=c^{H-(1/2)I}B_0 \in {\mathcal B}(\bbR^{p})$, \eqref{e:c_nu*(c^((1/2)I-H)B)=nu*(c^{-H}B)} implies that $c \hspace{0.5mm}\nu_*(B_0) = \nu_*(c^{-\mathcal H} c^{ H -(1/2)I}B_0)$, i.e.,
\begin{equation}\label{e:cnu*(dy)=nu*(c^(-1/2I)dy)}
c \hspace{0.5mm}\nu_*(d{\mathbf y}) = \nu_*(c^{-\mathcal H} c^{ H -(1/2)I}d{\mathbf y}).
\end{equation}
Starting from the L\'{e}vy measure $\nu_*$, we can define a L\'{e}vy process $L_*$ by means of the characteristic function
\begin{equation}\label{e:L*_chf}
\bbE e^{\imag \langle\mathbf{u}, L_*(t)\rangle} = \exp\Big\{t\hspace{1mm} \int_{\bbR^{p}}
\Big( e^{\imag \langle{\mathbf u},{\mathbf y}\rangle }-1 -\imag \langle{\mathbf u}, {\mathbf y}\rangle \Big) \nu_*(d{\mathbf y}) \Big\}.
\end{equation}
In particular, $L_*(0) = 0$, $L_*$ has finite second moment and $L_*(t)$ is proper for $t \neq 0$. By \eqref{e:cnu*(dy)=nu*(c^(-1/2I)dy)},
\begin{equation}\label{e:chf_Levy}
\bbE e^{\imag \langle\mathbf{u}, L_*(ct)\rangle} = \bbE e^{\imag \langle\mathbf{u}, c^{\mathcal H} c^{(1/2)I-H} L_*(t)\rangle}, \quad c > 0.
\end{equation}
 Since $L_*$ is a L\'{e}vy process, relation \eqref{e:chf_Levy} implies that, for every $c > 0$, there exists a linear operator $B(c)$ and a vector $b(c)$ -- namely, $B(c) = c^{\mathcal H} c^{(1/2)I-H}$ and $b(c) = {\mathbf 0}$ -- such that $\{ L_*(ct)\}_{t\in\bbR} \stackrel {\text{f.d.d.}} = \{ B(c) L_*(t)+ b(c)\}_{t\in\bbR}$. Therefore, since $L_*$ is proper, Theorem 1 in Hudson and Mason \cite{hudson:mason:1982} shows that there exist a matrix $H'$ and a nonrandom function $d:[0,\infty)\to \bbR^p$ such that $\{ L_*(ct)\}_{t\in\bbR} \stackrel {\text{f.d.d.}} = \{ c^{H'}L_*(t)+d(c)\}_{t\in\bbR}$. Furthermore, since $L_*$ is stochastically continuous, Theorem 7 in Hudson and Mason \cite{hudson:mason:1982} implies $d\equiv 0$, and that $L_*(1)$ is operator-stable (with exponent $H'$). However, $L_*$ has finite second moment, which in turn implies $L_*$ must be Gaussian. This contradicts the fact that, by \eqref{e:L*_chf}, $L_*$ has no Gaussian component. Hence, maofLm $X_H$ is not o.s.s., as claimed. The case for rhofLm $\widetilde X_H$ is analogous.

We now turn to $(vii)$. Without loss of generality, we can rewrite $\Re h_1 \leq \ldots \leq \Re h_p$. Let $d_1 = \Re h_1-\frac{1}{2}$. Again without loss of generality, let $0\leq s<t\leq 1$, and write $r=t-s$.  Recall that the Frobenius inner product $\langle \cdot,\cdot\rangle_F$ of two matrices $A,B\in M(p,\C)$ is given by $\langle A ,B\rangle_F=\tr(A^*B)$, and write $\|\cdot \|_F$ for the corresponding norm. Also, write $H=P J_H P^{-1}$ for the Jordan decomposition of $H$, and recall that for any Jordan block $J_{h}$ corresponding to $h\in \text{eig}(H)$ of size $k$,
$$
r^{J_h} = \begin{pmatrix}
r^h & 0 & 0 & \dots & 0\\
(\log r) r^h & r^h & 0 & \dots & 0\\
\frac{(\log r)^2}{2!}r^h & (\log r) r^h & r^h & &\\
\vdots  & \vdots &  &\ddots &\\
\frac{(\log r)^{k-1}}{(k-1)!}r^h& \frac{(\log r)^{k-2}}{(k-2)!}r^h & \ldots & (\log r)r^h & r^h
\end{pmatrix}, \quad r > 0.
$$
By stationarity of the increments of maofLm,
$$
\E \| X_H(t)-X_H(s)\|^2 = \E \| X_H(r)\|^2 = \tr \big( \E X_H(r) X_H(r)^*\big)
$$
$$
=\tr \big( r^H\E X_H(1) X_H(1)^* r^{H^*}\big)= \tr \big( r^{H^*}r^H \E X_H(1) X_H(1)^*\big)
$$
$$
\leq \| r^{H^*}r^H\|_F \|  \E X_H(1) X_H(1)^*\|_F \leq C \| r^{H}\|_F^2 \leq C'  \| r^{J_H}\|_F^2
$$
$$
\leq
  C''\max_{h\in \text{eig}(H)}\| r^{J_h}\|_F^2 \leq  C''' ({|\log r|^{2(p-1)}} \vee 1) r^{2\min \{\Re h_1,\ldots,\Re h_p\}} {\leq C'''' r^{2 d_1 +1-\varepsilon}},
$$
for every $\varepsilon>0$. Hence, by the Kolmogorov-$\breve{\textnormal{C}}$entsov theorem (e.g., Kallenberg \cite{kallenberg:2006}, Theorem 2.23), $X$ has a modification that is a.s.\ locally $\gamma$-H\"older continuous for each $\gamma\in (0,  d_1)$, as claimed. The statement also holds for rhofLm in view of its wide-sense stationary increments (see statement $(v)$). $\Box$\\

\noindent {\sc Proof of Proposition \ref{p:XH_harm_YH_ma}}: Both statements are a consequence of the Parseval-type relations \eqref{e:parseval1}, \eqref{e:parseval2} and Proposition \ref{p:spec}. $\Box$\\

\noindent {\sc Proof of Proposition \ref{p:maofLm->BH(t)_rhofLm->BH(t)}}: We begin with $(i)$. For any $t \neq 0$, let $g_{t}$ be as in \eqref{e:ft(s)=F^(-)(gt(s))}, and recall the scaling relation \eqref{e:g_ct(cs)=c^D*g_t(s)} the kernel $g_{t}$ satisfies. For $n \in \bbN$, let $t_1,\ldots,t_n\in \R$. By Theorem \ref{t:rhofLm_maofLm_integ_repres_chf}, $(i)$, the joint characteristic function of $X_H(t_1),\ldots,X_H(t_n)$ is given by
$$
\E \exp \Big \{\imag \sum_{j=1}^n\langle {\mathbf u}_j, X_H(t_j)\rangle \Big \}=  \exp   \Big\{\int_\R \psi\Big( \sum_{j=1}^n g_{t_j}(s)^*{\mathbf u}_j\Big)ds\Big\},
$$
where the L\'{e}vy symbol $\psi$ is as in \eqref{e:Levy_symbol_finite_2nd_moment}. Now consider the collection of rescaled vectors
$$
c^{-H}X_H(ct_1),c^{-H}X_H(ct_2),\ldots,c^{-H}X_H(ct_n).
$$
Then, their joint characteristic function is given by
$$
\E e^{ \imag\sum_{j=1}^n\langle {\mathbf u}_j,{c^{-H}X_H(ct_j)}\rangle}=  \E e^{ \imag\sum_{j=1}^n\langle c^{-H^*}{\mathbf u}_j,X_H(ct_j)\rangle}=  \exp \Big\{ \int_\R \psi\Big( \sum_{j=1}^n g_{ct_j}(s)^*c^{-H^*}{\mathbf u}_j\Big)ds \Big\}
$$
$$
\stackrel{cv=s}{=}  \exp \Big\{ \int_\R \psi\Big( \sum_{j=1}^n g_{ct_j}(cv)^*c^{-H^*}{\mathbf u}_j\Big)cdv \Big\} =   \exp \Big\{\int_\R \psi\Big(  \sum_{j=1}^n g_{t_j}(v)^*c^{-\frac{1}{2}I}{\mathbf u}_j\Big)c \hspace{1mm} dv \Big\} $$
\begin{equation*}\begin{split}
=\exp \Big\{\int_\R\int_{\R^p}\Big(\exp \Big\{\imag\Big\langle \sum_{j=1}^ng_{t_j}&(v)^*c^{-\frac{1}{2}I}{\mathbf u}_j,{\mathbf z}\Big\rangle \Big\}\\
& - 1 - \imag\Big\langle{ \sum_{j=1}^n g_{t_j}(v)^*c^{-\frac{1}{2}I}{\mathbf u}_j,{\mathbf z}}\Big\rangle \Big)c\hspace{1mm}\mu(d{\mathbf z})dv \Big\}
\end{split}\end{equation*}
\begin{equation}\begin{split}\label{e:c1/2_prelim}
=\exp \Big\{ \int_\R\int_{\R^p}\Big(\exp \Big\{ c^{-1/2}\imag\Big\langle \sum_{j=1}^n g_{t_j}&(v)^*{\mathbf u}_j,{\mathbf z}\Big\rangle \Big\}\\
&- 1 - \imag\Big\langle{ c^{-1/2}\sum_{j=1}^n g_{t_j}(v)^*{\mathbf u}_j,{\mathbf z}}\Big\rangle \Big)c \hspace{1mm}\mu(d{\mathbf z})dv \Big\}.
\end{split}\end{equation}
Note that $h_c(y):= c\hspace{0.5mm}(e^{\imag  c^{-1/2}y}-1- \imag c^{-1/2}y) \sim -\frac{1}{2}y^2$ as $c\to\infty$ and that $|h_c(y)|\leq y^2$ for all $y\in\R$. Writing $\xi(v,{\mathbf z})=\big\langle{ \sum_{j=1}^ng_{t_j}(v)^*{\mathbf u}_j,{\mathbf z}}\big\rangle$, the integrand $h_c(\xi(v,{\mathbf z}))$ in expression \eqref{e:c1/2_prelim} satsifies
$$
|h_c(\xi(v,{\mathbf z}))|\leq  \xi(v,{\mathbf z})^2 =\sum_{j,k=1}^n  {{\mathbf u}^*_jg_{t_j}(v){\mathbf z}{\mathbf z}^*g_{t_k}^*(v)u_k},
$$
which is integrable with respect to $\mu(d{\mathbf z})dv$ since each $g_{t_j}(\cdot)\in L^2(\R,M(p,\R))$ and $\|\int_{\R^p} {\mathbf z}{\mathbf z}^*\mu(d{\mathbf z})\|=1$.
Thus, by the dominated convergence theorem,
as $c\to\infty$, \eqref{e:c1/2_prelim} converges to
$$
\exp \Big \{-\frac{1}{2} \int_\R\int_{\R^p}\Big(\sum_{j=1}^n\langle{g_{t_j}(v)^*{\mathbf u}_j,{\mathbf z}}\rangle\Big)^2 \hspace{0.5mm} \mu(d{\mathbf z})dv\Big\}
$$
$$
=\exp \Big \{-\frac{1}{2} \int_\R\int_{\R^p}\sum_{j,k=1}^n{{\mathbf u}^*_jg_{t_j}(v) {\mathbf z}{\mathbf z}^* g_{t_k}^*(v){\mathbf u}_k} \hspace{0.5mm} \mu(d{\mathbf z})dv\Big\}
$$
\begin{equation}\label{e:chf_ofbm}
=\exp \Big \{-\frac{1}{2} \int_\R\sum_{j,k=1}^n{{\mathbf u}^*_jg_{t_j}(v)g_{t_k}^*(v){\mathbf u}_k} \hspace{0.5mm}dv\Big\}
\end{equation}
where we use condition \eqref{e:int_zz*=I}.  Note that \eqref{e:chf_ofbm} is equal to $\exp \Big\{ -\frac{1}{2} {\mathbf u}^*\Sigma_{B_H}{\mathbf u}\Big\}$, where $ \Sigma_{ B_H}$ is the $pn \times pn$ block matrix
$$
 \Sigma_{B_H} = \Big( \hspace{1mm}\int_{\bbR} g_{t_i}(s) g_{t_j}(s)^*ds \hspace{1mm}\Big)_{i,j=1,\hdots,n}.
 $$
Hence, \eqref{e:chf_ofbm} is the characteristic function of an ofBm at times $t_1,\ldots,t_n$.

For $(ii)$, for any $t \neq 0$, let $\widetilde{g}_{t}$ be as in \eqref{e:fLm_filter_harm}. For $s \in \bbR$ and $\varepsilon > 0$, Theorem \ref{t:rhofLm_maofLm_integ_repres_chf}, $(iv)$, implies that
$$
\widetilde{X}_{H}(s + \varepsilon t)- \widetilde{X}_{H}(s) \stackrel{\textnormal{f.d.d.}}= \widetilde{X}_{H}(\varepsilon t).
$$
So, fix $n \in \bbN$ and let ${\mathbf u}_1,\hdots,{\mathbf u}_n \in\bbR^p$, $t_1,\hdots,t_n \in\bbR$. Note that $\widetilde{g}_{\varepsilon t}(\varepsilon^{-1}x) = x^{D+I}\widetilde{g}_{t}(x)$. By Proposition \eqref{t:rhofLm_maofLm_integ_repres_chf}, $(i)$,
$$
\bbE e^{\imag \sum^{N}_{j=1}{\mathbf u}^*_j \varepsilon^{-H}\widetilde{X}_H(t_j)}
= \exp   \Big\{\int_\R \widetilde\psi\Big( \sum_{j=1}^n \widetilde{g}_{\varepsilon t_j}(x)^*\varepsilon^{-H^*}{\mathbf u}_j\Big)dx\Big\}
$$
\begin{equation}\label{e:phitilde_changedvar}
\stackrel{y = \varepsilon x}{=}  \exp \Big\{ \int_\R \widetilde \psi\Big( \sum_{j=1}^n \widetilde{g}_{ t_j}(y \varepsilon^{-1})^*\varepsilon^{-H^*}{\mathbf u}_j\Big)\frac{dy}{\varepsilon} \Big\}
=\exp \Big\{ \int_\R \widetilde \psi\Big(\sum_{j=1}^n \widetilde{g}_{ t_j}(y)^*\varepsilon^{\frac{1}{2}I}{\mathbf u}_j\Big)\frac{dy}{\varepsilon} \Big\},
\end{equation}
where $\widetilde \psi$ is given by \eqref{e:int_varphi=2Re(gz)_chf_Levy_symbol}. Recast ${\mathbf z} = {\mathbf z}_1 + \imag {\mathbf z}_2$. By a similar dominated convergence argument as in part $(i)$, and using the fact that $\widetilde{g}_{t_j} \in L^{2}_{\textnormal{Herm}}(\bbR)$, $j=1,\hdots,n$, as $\varepsilon \to 0$ expression \eqref{e:phitilde_changedvar} converges to
$$
\exp\Big\{-2 \Big(\int_{\bbR \times \bbC^p} \sum_{j,k=1}^n {\mathbf u}^*_j\widetilde{g}_{t_j}(y){\mathbf z}_1 {\mathbf z}^*_1 \widetilde{g}_{t_k}^*(y){\mathbf u}_k  +\sum_{j,k=1}^n {\mathbf u}^*_j\widetilde{g}_{t_j}(y){\mathbf z}_2 {\mathbf z}^*_2 \widetilde{g}_{t_k}^*(y){\mathbf u}_k  \Big)\mu(d{\mathbf z})dy \Big\}.
$$
By using condition \eqref{e:int_RezRez*=sigma^2_I=int_ImzImz*}, we arrive at
$$
\exp \Big\{-\frac{1}{2} \int_\R \sum_{j,k=1}^n{{\mathbf u}^*_j\widetilde{g}_{t_j}(y)  \widetilde{g}_{t_k}^*(y){\mathbf u}_k}\hspace{1mm} dy\Big\}.
$$
This establishes $(ii)$. $\Box$\\

\noindent {\sc Proof of Proposition \ref{p:lass2}}: Whenever convenient, we write $D=H-(1/2)I$ (see \eqref{e:D=H-(1/2)I}). First note that, since $\Re \lambda_p(B)<1$, the expression \eqref{e:Levy_symbol_finite_first_moment} is well-defined and corresponds to the L\'evy symbol of a full operator-stable distribution in $\R^p$ (see Section \ref{s:tempered_oper-stable_Levy_measures}).  Furthermore,
that the process \eqref{e:opstab_MA} is well defined by Theorem 5.4 in Maejima and Mason \cite{maejima:mason:1994} (see also Theorem 4.2 in Kremer and Scheffler \cite{kremer:scheffler:2019}), since $\Re\lambda_1(\widetilde H - B) + \Re\lambda_1(B) = \Re\lambda_1(D)+\Re\lambda_1(B) >0$ and $\Re\lambda_p(\widetilde H - B -I) + \Re\lambda_p(B) = \Re \lambda_p(D)-1+\Re\lambda_p(B)<0$ {(n.b.: there is a typo in the original statement of Theorem 5.4 in Maejima and Mason \cite{maejima:mason:1994}; in the notation of that paper, their assumption should read $\Lambda_{D-B-I}+\Lambda_B<0$)}. We now proceed as in the proof of Proposition \ref{p:maofLm->BH(t)_rhofLm->BH(t)}, $(i)$. In fact, fix $n \in \bbN$ and $t_1,\hdots,t_n \in \bbR$, as well as the vectors ${\mathbf u}_1,\hdots,{\mathbf u}_n \in \bbR^p$. Let $\varepsilon > 0$. Consider $g_{t_j}$ as in \eqref{e:ft(s)=F^(-)(gt(s))}, $j = 1,\hdots,n$, and recall the scaling relation \eqref{e:g_ct(cs)=c^D*g_t(s)}. Then, by stationary increments and \eqref{e:H-tilde_1=D+B},
$$
 \E e^{ \imag\sum_{j=1}^n\langle {\mathbf u}_j,{\varepsilon^{-\widetilde H_1} \big( X_H(s+\varepsilon t_j)-X_H(s)\big)}\rangle}= \E e^{ \imag\sum_{j=1}^n\langle {\mathbf u}_j,{\varepsilon^{-\widetilde H_1} X_H(\varepsilon t_j)}\rangle}
$$
$$
 = \exp\Big\{\int_\R \psi\Big( \sum_{j=1}^n g_{\varepsilon t_j}(s)^*\varepsilon^{-\widetilde H_1^*}{\mathbf u}_j\Big)ds\Big\}
\stackrel{\varepsilon v=s}{=} \exp \Big\{ \int_\R \psi\Big( \sum_{j=1}^n g_{\varepsilon t_j}(\varepsilon v)^*\varepsilon ^{-\widetilde H_1^*}{\mathbf u}_j\Big)\varepsilon dv\Big\}
$$
$$
=\exp \Big\{\int_\R \psi\Big( \sum_{j=1}^n g_{t_j}( v)^*\varepsilon ^{(D-\widetilde H_1)^*}{\mathbf u}_j\Big)\varepsilon dv \Big\}
=\exp \Big\{\int_\R \psi\Big( \sum_{j=1}^n \varepsilon ^{-B^*}g_{t_j}( v)^*{\mathbf u}_j\Big)\varepsilon dv\Big\}
$$
\begin{equation*}\begin{split}
=\exp \Big\{\int_\R\int_{S_0}\int_{\R_+}\Big(\exp\Big\{\imag\Big\langle \sum_{j=1}^n g_{t_j}&(v)^*{\mathbf u}_j,(r/\varepsilon)^B\boldsymbol\theta\Big\rangle\Big\}\\
 &- 1 - \imag\Big\langle{ \sum_{j=1}^n g_{t_j}(v)^*{\mathbf u}_j,(r/\varepsilon)^B\boldsymbol\theta}\Big\rangle \Big)\varepsilon q(r,\boldsymbol\theta)\frac{dr}{r^2}\lambda(d\boldsymbol\theta)dv \Big\}
\end{split}\end{equation*}
\begin{equation*}\begin{split}
\stackrel{\xi=r/\varepsilon}{=}\exp \Big\{\int_\R\int_{S_0}\int_{\R_+}\Big(\exp\Big\{\imag\Big\langle \sum_{j=1}^n g_{t_j}&(v)^*{\mathbf u}_j,\xi^B\boldsymbol\theta\Big\rangle\Big\}\\
& - 1 - \imag\Big\langle{ \sum_{j=1}^n g_{t_j}(v)^*{\mathbf u}_j,\xi^B\boldsymbol\theta}\Big\rangle \Big)q(\varepsilon \xi,\boldsymbol\theta)\frac{d\xi}{\xi^2}\lambda(d\boldsymbol\theta)dv \Big\}
\end{split}\end{equation*}
\begin{equation}\begin{split}\label{e:chf_opstab_MA}
\to\exp \Big\{\int_\R\int_{S_0}\int_{\R_+}\Big(\exp\Big\{\imag\Big\langle \sum_{j=1}^n g_{t_j}&(v)^*{\mathbf u}_j,\xi^B\boldsymbol\theta\Big\rangle\Big\}\\
& - 1 - \imag\Big\langle{ \sum_{j=1}^n g_{t_j}(v)^*{\mathbf u}_j,\xi^B\boldsymbol\theta}\Big\rangle \Big)\frac{d\xi}{\xi^2}\lambda(d\boldsymbol\theta)dv \Big\},
\end{split}\end{equation}
as $\varepsilon\to 0^+$. The limit in \eqref{e:chf_opstab_MA} is a consequence of the dominated convergence theorem and of relation \eqref{e:q(0+,theta)=1}, since $|q(r,\boldsymbol\theta)|\leq 1$. By Proposition 2.17 in Sato \cite{sato:2006}, \eqref{e:chf_opstab_MA} is the characteristic function of \eqref{e:opstab_MA}. Therefore, the limit \eqref{e:ofLm_local_behavior_TOS} of the rescaled finite-dimensional distributions of $X_H$ holds.

For $(ii)$, we first verify the existence of the limiting process  \eqref{e:opstab_harm}  by applying Theorem 2.5 in  Kremer and Scheffler \cite{kremer:scheffler:2019}.  To use the proposition, we view \eqref{e:opstab_harm} as a process in $\R^{2p}$, where we identify
$$
\R^{2p\times 2p}\ni \widetilde g_t(x) \equiv \begin{pmatrix}
{\Re \widetilde{g}_t(x)} & {-\Im \widetilde{g}_t(x)}\\
0 & 0
\end{pmatrix}
$$
(see Proposition 5.10 in Kremer and Scheffler \cite{kremer:scheffler:2017}.)
Fix $t\neq 0$. To apply Theorem 2.5 in  Kremer and Scheffler \cite{kremer:scheffler:2019}, we need to verify
\begin{equation}\label{e:kremer_scheffler_2.5}
\int_{\{x:\|\widetilde g_t(x)\|<R\}} \|\widetilde g_t(x)\|^{{\frac{1}{\Re\lambda_p(B)} - \delta_1}}dx + \int_{\{x:\|\widetilde g_t(x)\|>R\}} \|\widetilde g_t(x)\|^{{\frac{1}{\Re\lambda_1(B)} + \delta_2}} dx <\infty
\end{equation}
for some $R>0$ and appropriate $\delta_1\in(0,\frac{1}{\Re \lambda_p(B))}) ,\delta_2>0$. Recall that $D = H - (1/2)I$. Since $\|\widetilde g_t(x)\|\to0$ as $|x|\to\infty$, and $\|\widetilde g_t(\cdot)\|$ is continuous on $\R\setminus\{0\}$, for $R$ large enough there exists an $\varepsilon>0$ so that the  set $\{x:\|\widetilde g_t(x)\|>R\}\subseteq\{x:|x|<\varepsilon\}$. Further note that for each $\delta>0$ there exists $C>0$ such that
\begin{equation}\label{e:max2}
 \max\{\|\Re\widetilde{g}_{t}(x)\|,\|\Im\widetilde{g}_{t}(x)\|\}\mathbf 1_{\{|x|>\varepsilon\}} \leq C |x|^{\delta-\Re{\lambda_1 (D)}-1}
 $$$$
 \text{max}\{\|\Re\widetilde{g}_{t}(x)\|,\|\Im\widetilde{g}_{t}(x)\|\}\mathbf 1_{\{|x|\leq \varepsilon\}} \leq C |x|^{-\delta-\Re{\lambda_p (D)}},
\end{equation}
(see Theorem 2.2.4 in Meerschaert and Scheffler \cite{meerschaert:scheffler:2001}), where in the second inequality we used the fact that $\frac{e^{\imag t x}-1}{\imag x}$ is bounded for all small $|x|$.

 So, take $\delta>0$ small enough so that
\begin{equation}\label{e:Re_lambdap(B)+Re_lambdap(D)>1+delta}
\Re\lambda_1(D)+ 1 -\Re \lambda_p(B) = \Re\lambda_1(H)+\Big( \frac{1}{2} - \Re\lambda_p(B)\Big) > \delta
\end{equation}
and
\begin{equation}\label{e:Re_lambda1(D)+Re_lambda1(B)<1-delta}
\Re\lambda_p(D) +1 -\Re \lambda_1(B)=\Re\lambda_p(H)+\Big(\frac{1}{2}-\Re \lambda_1(B)\Big)<1-\delta.
\end{equation}
Let $C>0$ be the constant satisfying both inequalities \eqref{e:max2}.
For notational simplicity, write $\rho_1=-(\delta - \Re{\lambda_1 (D)}-1)$ and $\rho_2 = \delta+\Re\lambda_p (D)$.  Now, for any $\delta_1>0$,
\begin{equation}\label{e:max_int_near_0}
\int_{\{|x|>\varepsilon \}}  \max\{\|\Re\widetilde{g}_{t}(x)\|,\|\Im\widetilde{g}_{t}(x)\|\}^{\frac{1}{\Re\lambda_p(B)} - \delta_1}dx\leq C \int_{\{|x|> \varepsilon \}} |x|^{\frac{-\rho_1}{\Re\lambda_p(B)} + \delta_1\rho_1}dx.
\end{equation}
By \eqref{e:Re_lambdap(B)+Re_lambdap(D)>1+delta}, $\frac{-\rho_1}{\Re\lambda_p(B)}= \frac{\delta-\Re\lambda_1 (D)-1}{\Re\lambda_p(B)}<-1$.  By choosing $\delta_1$ so $\rho_1\delta_1$ is small enough, we obtain ${\frac{-\rho_1}{\Re\lambda_p(B)} + \delta_1\rho_1}<-1$, implying the integral \eqref{e:max_int_near_0} is finite.  This shows the first summand in \eqref{e:kremer_scheffler_2.5} is finite, since
$$
\int_{\{x:\|\widetilde g_t(x)\|<R\}} \|\widetilde g_t(x)\|^{{\frac{1}{\Re\lambda_p(B)} + \delta_1}}(\mathbf 1_{\{|x|\leq \varepsilon\}}+\mathbf 1_{\{|x|>\varepsilon\}})dx \leq C'+  \int_{\{|x|> \varepsilon \}} |x|^{\frac{-\rho_1}{\Re\lambda_p(B)} + \delta_1\rho_1}dx
$$
by \eqref{e:max_int_near_0}.
 Now, for any $\delta_2>0$,
\begin{equation}\label{e:max_int_near_inf}
\int_{\{|x|\leq \varepsilon\}}  \max\{\|\Re\widetilde{g}_{t}(x)\|,\|\Im\widetilde{g}_{t}(x)\|\}^{\frac{1}{\Re\lambda_1(B)} + \delta_2}dx\leq C \int_{\{|x|\leq \varepsilon\}}|x|^{\frac{-\rho_2}{\Re\lambda_1(B)} - \delta_2\rho_2}dx.
\end{equation}
By \eqref{e:Re_lambda1(D)+Re_lambda1(B)<1-delta}, we see that $\frac{-\rho_2}{\Re\lambda_1(B)}=\frac{-\delta-\Re \lambda_p(D)}{\Re\lambda_1(B)} >-1 $. Hence, by choosing $\delta_2$ small enough we have $\frac{\rho_2}{\Re\lambda_1(B)} + \delta_2\rho_2>-1$ and  the integral \eqref{e:max_int_near_inf} is also finite.  If we write $\|\cdot\|_{q\times q}$ for the operator norm in $\R^{q\times q}$ then, clearly, $\| \widetilde g_t(x)\|_{2p\times 2p} \leq2\max\{\|\Re\widetilde{g}_{t}(x)\|_{p\times p},\|\Im\widetilde{g}_{t}(x)\|_{p\times p}\}$.  Hence, the conditions of Theorem 2.5 in Kremer and Scheffler \cite{kremer:scheffler:2019} are satisfied, which implies the process \eqref{e:opstab_harm} exists due to Proposition 5.10 in Kremer and Scheffler \cite{kremer:scheffler:2017}. Moreover, by Corollary 5.11(b) in Kremer and Scheffler \cite{kremer:scheffler:2017}, the characteristic function of the candidate limiting process \eqref{e:opstab_harm} at times $t_1,\ldots,t_n$ is given by
\begin{equation}\label{e:chf_opstab_harm}
 \exp \int_\R\int_{\R^{2p}}W\Big(2\hspace{0.25mm} \sum^{n}_{k=1}{\mathbf u}^*_k(\Re\widetilde{g}_{t_k}(y) \mathbf z_1 - \Im \widetilde{g}_{t_k}(y) \mathbf z_2 )  \Big)\Big)\mu_{\widetilde B}(d\mathbf z) dy
 $$
 $$
 =\exp \int_\R\int_{ S_0}\int_{\R_+}W\Big(2\hspace{0.25mm} \sum^{n}_{k=1}{\mathbf u}^*_k(\Re\widetilde{g}_{t_k}(y) \xi^B\boldsymbol \theta - \Im \widetilde{g}_{t_k}(y) \xi^B\boldsymbol \theta )  \Big)\Big)\frac{d\xi}{\xi^2}\lambda(d\boldsymbol \theta)dy
\end{equation}
(see \eqref{e:def_opstab}), where for notational simplicity we used the expression $W(y)=e^{\imag y}-1-\imag y$, $y\in\R$.
Now, to establish the convergence \eqref{e:rhofLm_local_behavior_TOS}, observe that the scaling relation
$$
\widetilde{g}_{ct}(x c^{-1})c^{-\widetilde H_2} = \widetilde g_t(x) c^{H+\frac{1}{2}I-\widetilde H_2} = \widetilde g_t(x) c^{B}
$$
holds. So, the characteristic function of the rescaled vector $(c^{-\widetilde H_2}\widetilde X_H(c t_1),\ldots,c^{-\widetilde H_2}\widetilde X_H(c t_1))^*$ is given by
$$
\exp\Big\{ \int_{\bbR} \int_{\bbC^p}\Big[e^{\imag \hspace{0.25mm}2 \hspace{0.25mm}\Re \big(  \sum^{n}_{k=1}{\mathbf u}^*_k \hspace{0.5mm}c^{-\widetilde H_2}\widetilde{g}_{ct_k}(x){\mathbf z}\big)}- 1 -
\imag \hspace{0.25mm}2\hspace{0.25mm} \Re \Big( \sum^{n}_{k=1}{\mathbf u}^*_k \hspace{0.5mm}c^{-\widetilde H_2}\widetilde{g}_{ct_k}(x){\mathbf z}\Big)\Big]\mu(d{\mathbf z}) dx \Big\}
$$
$$
= \exp\Big\{ \int_{\bbR} \int_{\bbR^{2p}}W\Big(2\hspace{0.25mm} \sum^{n}_{k=1}{\mathbf u}^*_kc^{-\widetilde H_2} (\Re\widetilde{g}_{ct_k}(x) {\mathbf z}_1 - \Im \widetilde{g}_{ct_k}(x) {\mathbf z}_2)  \Big)\mu_{\bbR^{2p}}(d{\mathbf z}) dx  \Big\},
$$
$$
\stackrel{y=cx}= \exp\Big\{ \int_{\bbR} \int_{\bbR^{2p}}W\Big(2\hspace{0.25mm} \sum^{n}_{k=1}{\mathbf u}^*_kc^{-\widetilde H_2} (\Re\widetilde{g}_{ct_k}(yc^{-1}) {\mathbf z}_1 - \Im \widetilde{g}_{ct_k}(yc^{-1}) {\mathbf z}_2)  \Big)\mu_{\bbR^{2p}}(d{\mathbf z}) \frac{dy}c  \Big\},
$$
$$
= \exp\Big\{ \int_{\bbR} \int_{\bbR^{2p}}W\Big(2\hspace{0.25mm} \sum^{n}_{k=1}{\mathbf u}^*_k(\Re\widetilde{g}_{ct_k}(y)c^B{\mathbf z}_1- \Im \widetilde{g}_{ct_k}(y) c^B{\mathbf z}_2)  \Big)\mu_{\bbR^{2p}}(d{\mathbf z}) \frac{dy}c  \Big\},
$$
$$
\stackrel{\eqref{e:def_TalphaS}}=\exp \int_\R\int_{S_0}\int_{\R_+}W\Big(2\hspace{0.25mm} \sum^{n}_{k=1}{\mathbf u}^*_k(\Re\widetilde{g}_{t_k}(y) (cr)^B\boldsymbol \theta - \Im \widetilde{g}_{t_k}(y) (cr)^B\boldsymbol \theta )  \Big)q(r,\boldsymbol \theta)\frac{dr}{r^2}\lambda(d\boldsymbol \theta)\frac{dy}{c}
$$
$$
\stackrel{\xi = cr}=\exp \int_\R\int_{S_0}\int_{\R_+}W\Big(2\hspace{0.25mm} \sum^{n}_{k=1}{\mathbf u}^*_k(\Re\widetilde{g}_{t_k}(y) \xi^B\boldsymbol \theta - \Im \widetilde{g}_{t_k}(y) \xi^B\boldsymbol \theta )  \Big)q(c^{-1}\xi,\boldsymbol \theta)\frac{d\xi}{\xi^2}\lambda(d\boldsymbol \theta)dy
$$
$$
\stackrel{c \to \infty}\to \exp \int_\R\int_{S_0}\int_{\R_+}W\Big(2\hspace{0.25mm} \sum^{n}_{k=1}{\mathbf u}^*_k(\Re\widetilde{g}_{t_k}(y) \xi^B\boldsymbol \theta - \Im \widetilde{g}_{t_k}(y) \xi^B\boldsymbol \theta )  \Big)\Big)\frac{d\xi}{\xi^2}\lambda(d\boldsymbol \theta)dy,
$$
where the limit is again a consequence of the dominated convergence theorem and relation \eqref{e:q(.,theta)->0}. The conclusion follows. $\Box$\\

\section{Proofs: Section \ref{s:time_revers}}

\noindent {\sc  Proof of Theorem \ref{t:maofLm_time-reversibility}}: Let $\{Y_H(t)\}_{t \in \bbR} = \{X_H(-t)\}_{t \in \bbR}$ be the time-reversed process. First note that, if $\{f_t({\boldsymbol \varpi}),t \in \bbR\}= \{g_t(s){\mathbf z},t \in \bbR\}$ is a minimal representation of $X_H$ with respect to ${\mathcal B} \hspace{-1mm}\mod \kappa$, then
\begin{equation}\label{e:f_minus_t_is_minimal}
\{f_{-t}({\boldsymbol \varpi}),t \in \bbR\} \textnormal{ is a minimal representation of $Y_H$ with respect to } {\mathcal B} \hspace{-1mm}\mod \kappa.
\end{equation}
We first show $(ii)\Rightarrow (i)$. Note that, by \eqref{e:M-^(-1)*M+*z=M+^(-1)*M-*z},
$$
g_{-t}(s){\mathbf z} = \Big\{[(-t - s)^{D}_+ - (-s)^{D}_+]M_+ +  [(-t -s)^{D}_- - (-s)^{D}_-]M_- \Big\}{\mathbf z}
$$
\begin{equation}\label{e:g-t(s)_mu(dz)-a.e.}
= [(t + s)^{D}_- - (s)^{D}_-]M_- \hspace{0.5mm}(M^{-1}_+ M_-) {\mathbf z}
+  [(t + s)^{D}_+ - s^{D}_+] M_+ \hspace{0.5mm}(M^{-1}_+ M_-){\mathbf z} \quad \mu(d{\mathbf z})\textnormal{--a.e.}
\end{equation}
So, for any $m \in \bbN$, fix $t_1 < \hdots < t_m$, pick any vectors ${\mathbf u}_1,\hdots,{\mathbf u}_m \in \bbR^p$. By expressions \eqref{e:exp(int_psi(f)ds)} and \eqref{e:g-t(s)_mu(dz)-a.e.}, the finite-dimensional distributions of $\{X_H(-t)\}_{t \in \bbR}$ are given by
$$
\bbE \exp\Big\{ \sum^{n}_{j=1} {\mathbf u}^*_j X_H(-t_j) \Big\} = \exp \Big\{ \int_{\bbR} \int_{\bbR^p} \Big(e^{\imag \sum^{n}_{j=1} {\mathbf u}^*_j g_{-t_j}(s) \hspace{0.25mm}{\mathbf z}}- 1 - \imag \sum^{n}_{j=1} {\mathbf u}^*_j g_{-t_j}(s)\hspace{0.25mm}{\mathbf z} \Big) \hspace{0.5mm}\mu(d{\mathbf z}) \hspace{0.5mm} ds \Big\}
$$
$$
= \exp \Big\{ \int_{\bbR} \int_{\bbR^p} \Big(e^{\imag \sum^{n}_{j=1} {\mathbf u}^*_j g_{t_j}(s') \hspace{0.25mm}{\mathbf z}'}- 1 - \imag \sum^{n}_{j=1} {\mathbf u}^*_j g_{t_j}(s')\hspace{0.25mm}{\mathbf z}' \Big) \hspace{0.5mm}\mu(d{\mathbf z}') \hspace{0.5mm} ds \Big\}
= \bbE \exp\Big\{ \sum^{n}_{j=1} {\mathbf u}^*_j X_H(t_j) \Big\},
$$
where we make the change of variable $(s',{\mathbf z}') = (-s,(M^{-1}_+ M_-){\mathbf z})$ and apply conditions \eqref{e:M-^(-1)*M+*z=M+^(-1)*M-*z} and \eqref{e:mu(matrix*dx)=mu(dz)}. Therefore, $X_H$ is time-reversible. This establishes $(i)$.

Now, we establish $(i)\Rightarrow (ii)$. So, suppose $X_H$ is time-reversible. We first show that \eqref{e:M-^(-1)*M+*z=M+^(-1)*M-*z} holds. In terms of spectral representations, time reversibility means that, for $f_t({\boldsymbol \varpi})$ as in \eqref{e:ft(omega)=gt(s)*x},
$$
\Big\{\int_{\bbR^{p+1}} f_t({\boldsymbol \varpi}) \widetilde{N}(d{\boldsymbol \varpi})\Big\}_{t \in \bbR} \stackrel{{\mathcal L}}= \Big\{\int_{\bbR^{p+1}} f_{-t}({\boldsymbol \varpi}) \widetilde{N}(d{\boldsymbol \varpi})\Big\}_{t \in \bbR}.
$$
By assumption and by \eqref{e:f_minus_t_is_minimal}, both $\{f_t({\boldsymbol \varpi})\}_{t \in \bbR}$ and $\{f_{-t}({\boldsymbol \varpi})\}_{t \in \bbR}$ are minimal representations of $X_H$ on the space $(\bbR^{p+1},{\mathcal B}(\bbR^{p+1}),\kappa)$. Then, Proposition \ref{p:theorem_2.17_KabluchkoStoev} implies that there is a (unique modulo $\kappa$--null sets) mapping
$$
\Phi:\bbR^{p+1} \rightarrow \bbR^{p+1}, \quad {\boldsymbol \varpi} \mapsto (\Phi_1({\boldsymbol \varpi}),\Phi_2({\boldsymbol \varpi})), \quad \Phi_1({\boldsymbol \varpi}) \in \bbR, \hspace{1mm} \Phi_2({\boldsymbol \varpi})\in \bbR^p,
$$
such that, for all $t \in \bbR$,
\begin{equation}\label{e:f(-t)(s)=f_t(Phi(s))_a.e._general}
f_{-t}({\boldsymbol \varpi}) = f_t(\Phi({\boldsymbol \varpi})) \quad  \kappa(d{\boldsymbol \varpi}) \textnormal{--a.e.}
\end{equation}
So, for each $t\in\R$, let
$$
V_t = \big\{{\boldsymbol \varpi}: \eqref{e:f(-t)(s)=f_t(Phi(s))_a.e._general} \textnormal{ holds at } {\boldsymbol \varpi}\big\} \in \mathcal B(\R\times\R^p).
$$
Define $V = \bigcap_{t\in \mathbb Q} V_{t}$.  Observe that
\begin{equation}\label{e:indic(f-t_n(s)=f_{t_n}(Phi(s)))=1_general}
f_{-t}({\boldsymbol \varpi}) = f_{t}(\Phi({\boldsymbol \varpi})), \quad {\boldsymbol \varpi} = (s,{\mathbf z}) \in V,\quad t \in \mathbb Q.
\end{equation}
Now, for notational simplicity, consider the integrand $g_t$ with $s$ in place of $-s$. Fix any
\begin{equation}\label{e:w0=(s0,z0)_s0<0}
{\boldsymbol \varpi}_0 = (s_0,{\mathbf z}_0) \in V, \quad s_0 < 0.
\end{equation}
Let $\{t_n\}_{n\in\N} \subseteq \mathbb Q$ be a sequence such that $t_n \uparrow \infty$. For large enough $n$,
$$
[(-t_n + s_0)^{D}_- - (s_0)^{D}_-]M_- \hspace{1mm}{\mathbf z}_0
$$
$$
= \Big([(t_n+\Phi_1({\boldsymbol \varpi}_0))^{D}_+ - (\Phi_1({\boldsymbol \varpi}_0))^{D}_+]M_+   - (\Phi_1({\boldsymbol \varpi}_0))^{D}_- M_- \Big)\Phi_2({\boldsymbol \varpi}_0),
$$
i.e.,
$$
(-t_n + s_0)^{D}_-  M_- \hspace{1mm}{\mathbf z}_0   - (t_n+\Phi_1({\boldsymbol \varpi}_0))^{D}_+ M_+ \hspace{1mm}\Phi_2({\boldsymbol \varpi}_0)
$$
\begin{equation}\label{e:equiv_filters_at_t_and_-t_compPoi_general}
=  (s_0)^{D}_- M_- \hspace{1mm}{\mathbf z}_0 - (\Phi_1({\boldsymbol \varpi}_0))^{D}_+ M_+ \hspace{1mm}\Phi_2({\boldsymbol \varpi}_0) - (\Phi_1({\boldsymbol \varpi}_0))^{D}_- M_- \Phi_2({\boldsymbol \varpi}_0).
\end{equation}
Consider the Jordan decomposition $D = PJ_DP^{-1}$. The right-hand side of \eqref{e:equiv_filters_at_t_and_-t_compPoi_general} is a constant with respect to $n$. Therefore, after pre-multiplying both sides by $P^{-1}$, we can recast \eqref{e:equiv_filters_at_t_and_-t_compPoi_general} as
\begin{equation}\label{e:equiv_filters_at_t_and_-t_compPoi_general_const}
(-t_n + s_0)^{J_D}_-  P^{-1}M_- \hspace{1mm}{\mathbf z}_0  - (t_n+\Phi_1({\boldsymbol \varpi}_0))^{J_D}_+ P^{-1}M_+ \hspace{1mm}\Phi_2({\boldsymbol \varpi}_0) = C \in M(p,\bbR).
\end{equation}
We want to show that
\begin{equation}\label{e:M-*z0=M+*Phi2}
M_- \hspace{0.5mm}{\mathbf z}_0 = M_+ \Phi_2({\boldsymbol \varpi}_0) \equiv M_+ \Phi_2(s_0,{\mathbf z}_0).
\end{equation}
Without loss of generality, we can assume $J_D$ is a single Jordan block. In view of condition \eqref{e:Re_eig(H)=(0,1)-(1/2)}, it suffices to consider two cases, namely, when $J_D$ is a Jordan block associated with an eigenvalue $d$ with positive real part or with negative real part. So, first assume $\Re(d) > 0$ and rewrite \eqref{e:equiv_filters_at_t_and_-t_compPoi_general_const} as
$$
\Big(\frac{t_n+\Phi_1({\boldsymbol \varpi}_0)}{t_n-s_0}\Big)^{-J_D}P^{-1}M_- \hspace{1mm}{\mathbf z}_0  - P^{-1}M_+ \hspace{1mm}\Phi_2({\boldsymbol \varpi}_0) = (t_n + \Phi_1({\boldsymbol \varpi}_0))^{-J_D} C .
$$
If $C \neq {\boldsymbol 0}$, by taking $n \rightarrow \infty$, we arrive at a contradiction, since $\lim_{n \rightarrow \infty}\frac{t_n+\Phi_1({\boldsymbol \varpi}_0)}{t_n - s_0}=1$ and $\lim_{n \rightarrow \infty}\|(t_n - s_0)^{-J_D} C\|=\infty$. Therefore,
\begin{equation}\label{e:equiv_filters_at_t_and_-t_compPoi_general_const=0}
(-t_n + s_0)^{J_D}_-  P^{-1}M_- \hspace{1mm}{\mathbf z}_0  = (t_n+\Phi_1({\boldsymbol \varpi}_0))^{J_D}_+ P^{-1}M_+ \hspace{1mm}\Phi_2({\boldsymbol \varpi}_0).
\end{equation}
Alternatively, assume $\Re(d) < 0$. Rewrite \eqref{e:equiv_filters_at_t_and_-t_compPoi_general_const} as
$$
P^{-1}M_- \hspace{1mm}{\mathbf z}_0  - \Big(\frac{t_n+\Phi_1({\boldsymbol \varpi}_0)}{t_n - s_0}\Big)^{J_D} P^{-1}M_+ \hspace{1mm}\Phi_2({\boldsymbol \varpi}_0) = (t_n - s_0)^{-J_D} C .
$$
Again by taking $n \rightarrow \infty$, we arrive at a contradiction unless $C = {\boldsymbol 0}$. So, \eqref{e:equiv_filters_at_t_and_-t_compPoi_general_const=0} also holds. So, in any case, by taking $n \rightarrow \infty$ we conclude that \eqref{e:M-*z0=M+*Phi2} holds, as we wanted to show.

Still for $s_0 < 0$, now let $\{t_n\}_{n \in \bbN}\subseteq \mathbb Q$ be a sequence such that $t_n \downarrow -\infty$. Then, for large enough $n$,
$$
\Big\{[(-t_n + s_0)^{D}_+ ]M_+ + [ - (s_0)^{D}_-]M_- \Big\}\hspace{1mm}{\mathbf z}_0
$$
$$
= \Big\{[ - (\Phi_1({\boldsymbol \varpi}_0))^{D}_+]M_+   + [ (t_n+\Phi_1({\boldsymbol \varpi}_0))^{D}_- - (\Phi_1({\boldsymbol \varpi}_0))^{D}_- ] M_- \Big\}\Phi_2({\boldsymbol \varpi}_0).
$$
By an analogous argument to the one leading to \eqref{e:M-*z0=M+*Phi2}, we conclude that
\begin{equation}\label{e:M+*z0=M-*Phi2}
M_+ \hspace{0.5mm}{\mathbf z}_0 = M_- \Phi_2({\boldsymbol \varpi}_0) \equiv M_- \Phi_2(s_0,{\mathbf z}_0).
\end{equation}
As a consequence of \eqref{e:M-*z0=M+*Phi2} and \eqref{e:M+*z0=M-*Phi2}, for arbitrary  $(s,{\mathbf z}) \in V $ with $s<0$,
\begin{equation}\label{e:phi2_s<0}
\Phi_2(s,\mathbf z) = M_-^{-1}M_+ \mathbf z = M_+^{-1}M_- {\mathbf z}.
\end{equation}
This establishes \eqref{e:M-^(-1)*M+*z=M+^(-1)*M-*z} for all $(s,{\mathbf z}) \in V$ such that $s < 0$. Now let
\begin{equation}\label{e:w0=(s0,z0)_s0>0}
{\boldsymbol \varpi}_0 = (s_0,{\mathbf z}_0) \in V, \quad s_0 > 0.
\end{equation}
Let $\{t_n\}_{n \in \bbN} \subseteq \bbQ$ be a sequence such that $t_n \uparrow \infty$ as $n \rightarrow \infty$. By adapting the arguments for showing \eqref{e:equiv_filters_at_t_and_-t_compPoi_general_const=0}, we conclude that
\begin{equation}\label{e:(tn-s0)^DM-=(tn+Phi1)^DM+Phi2}
(t_n - s_0)^{D}_{+} M_- {\mathbf z}_0 = (t_n + \Phi_1({\boldsymbol \varpi}_0))^{D}_{+} M_+ \Phi_2({\boldsymbol \varpi}_0).
\end{equation}
Thus, by adapting the argument we conclude that relation \eqref{e:M-*z0=M+*Phi2} holds also for $s > 0$. Similarly, relation \eqref{e:M+*z0=M-*Phi2} holds for $s > 0$.

In summary, we conclude that
\begin{equation}\label{e:phi2_s_neq_0}
\Phi_2(s,\mathbf z) = M_-^{-1}M_+ \mathbf z = M_+^{-1}M_- {\mathbf z}, \quad (s,{\mathbf z}) \in V, \quad s\neq0.
\end{equation}
In other words, \eqref{e:M-^(-1)*M+*z=M+^(-1)*M-*z} holds, as we wanted to show.

Next, we show that \eqref{e:mu(matrix*dx)=mu(dz)} holds. So, fix again ${\boldsymbol \varpi}_0 = (s_0,\mathbf z_0)\in V$, $s_0 < 0$, as in \eqref{e:w0=(s0,z0)_s0<0}. Consider a sequence $\{t_n\}_{n\in\N} \subseteq \bbQ$ such that $t_n \uparrow \infty$. Then, for some fixed large $n$ (depending on $(s_0,\mathbf z_0)$), expressions \eqref{e:equiv_filters_at_t_and_-t_compPoi_general_const} (with $C = {\boldsymbol 0}$) and \eqref{e:phi2_s<0} imply that
$$
\Big(\frac{(-t_n+s_0)_-}{t_n+\Phi_1(s_0,\mathbf z_0)}\Big)^D M_- \mathbf z_0 =\Big(\frac{t_n-s_0}{t_n+\Phi_1(s_0,\mathbf z_0)}\Big)^D M_- \mathbf z_0 = M_+\Phi_2(s_0,\mathbf z_0) = M_-\mathbf z_0.
$$
In particular, $1$ is an eigenvalue of $\Big(\frac{t_n-s_0}{t_n+\Phi_1(s,\mathbf z_0)}\Big)^D$ with corresponding eigenvector $M_-\mathbf z_0$.  However, in view of condition \eqref{e:Re_eig(H)=(0,1)-(1/2)}, $\text{eig}(D)\cap \{0\} = \emptyset$. Hence,
$$
\{1\}\in \textnormal{eig}\Big( \Big(\frac{t_n-s_0}{t_n+\Phi_1(s,\mathbf z)}\Big)^D\Big) = \Big\{ w\in \C : w= \Big(\frac{t_n-s_0}{t_n+\Phi_1(s,\mathbf z)}\Big)^d, \hspace{1mm}d\in \text{eig}(D) \Big\}
$$
$$
\Leftrightarrow  \frac{t_n-s_0}{t_n+\Phi_1(s_0,\mathbf z_0)} =1.
$$
Thus,
$$
\Phi_1(s_0,\mathbf z_0) = -s_0.
$$
Now fix again ${\boldsymbol \varpi}_0 = (s_0,\mathbf z_0)\in V$, $s_0 > 0$, as in \eqref{e:w0=(s0,z0)_s0>0}, and let $\{t_n\}_{n \in \bbN}$ be a sequence such that $t_n \uparrow \infty$ as $n \rightarrow \infty$. Hence, \eqref{e:(tn-s0)^DM-=(tn+Phi1)^DM+Phi2} holds. Then, by the explicit expression \eqref{e:phi2_s_neq_0} for $\Phi_2$, we conclude that 1 is an eigenvalue of $(\frac{t_n-s_0}{t_n + \Phi_1({\boldsymbol \varpi}_0)})^D$. Thus, once again we arrive at the relation $t_n + s_0 = t_n - \Phi_1({\boldsymbol \varpi}_0)$, i.e.,
$$
\Phi_1({\boldsymbol \varpi}_0)= - s_0.
$$
Since $(s_0,\mathbf z_0)$ was arbitrary, we conclude that
$$
\Phi_1(s,\mathbf z)=-s\quad \text{for all }(s,\mathbf z) \in V.
$$
In particular,
$$
\Phi(s,\mathbf z) = (-s, M_-^{-1}M_+ \mathbf z)  \quad \kappa(ds,d{\mathbf z})\text{--a.e. }
$$
However, by Proposition \ref{p:theorem_2.17_KabluchkoStoev}, $(ii)$, the mapping $\Phi$ is a measure space isomorphism from the space $(\R\times \R^p, \mathcal{B}(\R\times \R^p), \kappa )$ to itself. In particular, expression \eqref{e:mubar1(A)=mubar2(Phi(A))} holds with $\kappa_1 = \kappa = \kappa_2$). Since, in addition, $\eta[-1,0]=1=\eta[0,1]$, then
$$
\mu(B) = \kappa([0,1]\times B) =\kappa \circ \Phi^{-1}\big([0,1]\times B)\big)= \kappa\big([-1,0]\times \Phi^{-1}_2(B)\big) =
\mu(\Phi^{-1}_2(B))
$$
for any Borel set $B\in \mathcal B(\R^p)$.
Hence, expression \eqref{e:mu(matrix*dx)=mu(dz)} holds. This shows that $(i)\Rightarrow (ii)$. Therefore, $(i)\Leftrightarrow (ii)$, as claimed.

We now show that $(a) \Leftrightarrow (a')$.  So, suppose $(a)$ in $(ii)$ holds.  Then, for each $s,t\in\R$, $M_+\mathbf z = M_-M_+^{-1}M_-\mathbf z$ $\mu(d{\mathbf z})$--a.e. Thus,
$$
g_{-t}(-s) \mathbf z =\Big\{[(-t +s)^{D}_+ - s^{D}_+]M_+ +  [(-t +s)^{D}_- - s^{D}_-]M_- \Big\}{\mathbf z}
$$
$$
= \Big\{[(-t +s)^{D}_+ - s^{D}_+]M_- +  [(-t +s)^{D}_- - s^{D}_-]M_+ \Big\}M_+^{-1}M_-{\mathbf z}
$$
$$
= \Big\{[(t -s)^{D}_- - (-s)^{D}_-]M_- +  [(t -s)^{D}_+ - (-s)^{D}_+]M_+ \Big\}M_+^{-1}M_-{\mathbf z} = g_{t}(s)M_+^{-1}M_-{\mathbf z}.
$$
Analogously, $g_{-t}(-s){\mathbf z}=g_{t}(s) M_-^{-1}M_+{\mathbf z}$ $\mu(d\mathbf z)$--a.e. Hence, $(a')$ holds. In turn, assuming $(a')$, for fixed $s$, by taking large enough $t$, by reasoning similarly to the argument leading to \eqref{e:equiv_filters_at_t_and_-t_compPoi_general_const=0} we obtain the relation
$$
\Big(\frac{t+s}{t-s}\Big)^D M_+ M_-^{-1}M_+ \mathbf z = M_-\mathbf z \quad \mu(d\mathbf z) \text{--a.e.}
$$
By taking the limit $t\to\infty$, we obtain \eqref{e:M-^(-1)*M+*z=M+^(-1)*M-*z}. Thus, $(a)$ holds. In other words, $(a) \Leftrightarrow (a')$, as claimed. $\Box$\\

\noindent {\sc  Proof of Theorem \ref{t:rhofLm_time-revers_general}}: As in the proof of Theorem \ref{t:maofLm_time-reversibility}, let $\{\widetilde{Y}_H(t)\}_{t \in \bbR} = \{\widetilde{X}_H(-t)\}_{t \in \bbR}$ be the time-reversed process. First note that, if $\{f_t({\boldsymbol \varpi}),t \in \bbR\}= \{\widetilde{g}_t(x){\mathbf z},t \in \bbR\}$ is a minimal representation of $\widetilde{X}_H$ with respect to ${\mathcal B} \hspace{-1mm}\mod \kappa$, then
\begin{equation}\label{e:f_minus_t_is_minimal_harm}
\{f_{-t}({\boldsymbol \varpi}),t \in \bbR\} \textnormal{ is a minimal representation of $\widetilde{Y}_H$ with respect to } {\mathcal B} \hspace{-1mm}\mod \kappa.
\end{equation}
First, we show the condition \eqref{e:mu(A^(-1)bar(Az))=mu(z)} implies time reversibility.  Observe that
$$
\Re (\widetilde g_t(x)\mathbf z) = \Re \Big( \frac{e^{\imag t x}-1}{\imag x} \hspace{0.5mm}\big[x^{-D}_{+}A  + x^{-D}_{-}\overline{A} \big]\mathbf z\Big)
$$
$$
=\Re \Big( \Big(\frac{e^{\imag t x}-1}{-\imag x}\Big) \hspace{0.5mm}\big[x^{-D}_{+}\overline A (-\overline A ^{-1} A ) + x^{-D}_{-}A(- A^{-1} \overline A)  \big]\mathbf z\Big)
$$
$$
=\Re \Big( \Big(\frac{e^{-\imag t x'}-1}{\imag x'}\Big) \hspace{0.5mm}\big[(x')^{-D}_{-}\overline A  + (x')^{-D}_{+}A \big]\mathbf z'\Big) =\Re (\widetilde g_{-t}(x')\mathbf z'),
$$
where $(x',\mathbf z') = (-x,   - A^{-1} \overline A \mathbf z \mathbf 1_{\{x<0\}}- \overline A^{-1} A \mathbf z\mathbf 1_{\{x>0\}})=: \Psi(x,\mathbf z)$. Then,
$$
\Re (\widetilde g_t(x)\mathbf z) = f_t(\boldsymbol\varpi) = f_{-t}(\Psi^{-1}(\boldsymbol\varpi)).
$$
For $\widetilde{\kappa}(d{\boldsymbol \varpi}) = dx \otimes \widetilde \mu(d\mathbf z)$, recall that we define $\widetilde \mu(d\mathbf z) = \frac{\mu(d\mathbf z) + \mu(\overline{d\mathbf z})}{2}$. Also observe that, by condition \eqref{e:mu(A^(-1)bar(Az))=mu(z)}, $\widetilde \mu(d\mathbf z) = \widetilde \mu (-A^{-1}\overline A d\mathbf z)$. We now show that
\begin{equation}\label{e:kappa-tilde=kappa-tilde_o_Psi^(-1)}
\widetilde \kappa = \widetilde \kappa \circ \Psi^{-1}.
\end{equation}
Indeed, if $I\in \mathcal B (\R)$, $B \in \mathcal B(\C^p)$, $I\pm = I\cap \R_{\pm}$, then
$$
\widetilde\kappa(I_+\times B) = \eta(I_+)\widetilde \mu(B) = \eta(-I_+)\widetilde \mu(- A^{-1} \overline A  B) = \widetilde\kappa \circ \Psi^{-1}(I_+\times B)
$$
and
$$
\widetilde\kappa(I_-\times B) =  \eta(I_-)\widetilde \mu(B) = \eta(-I_-)\widetilde \mu(- \overline A^{-1} A  B) = \widetilde\kappa \circ \Psi^{-1}(I_-\times B).
$$
This shows that $\widetilde\kappa(I\times B) = \widetilde\kappa \circ \Psi^{-1} (I\times B)$, which in turn implies the measures coincide on $\mathcal B(\R\times \C^p)$. This establishes \eqref{e:kappa-tilde=kappa-tilde_o_Psi^(-1)}.

Therefore, starting from \eqref{e:chf_harm}, by a change of variables,
$$
\bbE \exp \Big\{\imag \sum^{n}_{k=1}{\mathbf u}^*_k \widetilde{X}_H(t_k) \Big\}
$$
$$
= \exp\Big\{ \int_{\bbR\times \bbC^p}\Big[e^{\imag \hspace{0.25mm}2 \hspace{0.25mm}\Re \big(  \sum^{n}_{k=1}{\mathbf u}^*_k \hspace{0.5mm}f_{t_k}(\boldsymbol \varpi)\big)}- 1 -
\imag \hspace{0.25mm}2\hspace{0.25mm} \Re \Big(  \sum^{n}_{k=1}{\mathbf u}^*_k \hspace{0.5mm}f_{t_k}(\boldsymbol \varpi)\Big)\Big]\widetilde \kappa(d\boldsymbol \varpi) \Big\}
$$
$$
= \exp\Big\{ \int_{\bbR\times \bbC^p}\Big[e^{\imag \hspace{0.25mm}2 \hspace{0.25mm}\Re \big(  \sum^{n}_{k=1}{\mathbf u}^*_k \hspace{0.5mm}f_{-t_k}(\Psi^{-1}(\boldsymbol \varpi))\big)}- 1 -
\imag \hspace{0.25mm}2\hspace{0.25mm} \Re \Big(  \sum^{n}_{k=1}{\mathbf u}^*_k \hspace{0.5mm}f_{-t_k}(\Psi^{-1}(\boldsymbol \varpi))\Big)\Big]\widetilde\kappa (d\boldsymbol \varpi) \Big\}
$$
$$
= \exp\Big\{ \int_{\bbR\times \bbC^p}\Big[e^{\imag \hspace{0.25mm}2 \hspace{0.25mm}\Re \big(  \sum^{n}_{k=1}{\mathbf u}^*_k \hspace{0.5mm}f_{-t_k}(\boldsymbol \varpi)\big)}- 1 -
\imag \hspace{0.25mm}2\hspace{0.25mm} \Re \Big(  \sum^{n}_{k=1}{\mathbf u}^*_k \hspace{0.5mm}f_{-t_k}(\boldsymbol \varpi)\Big)\Big](\widetilde\kappa \circ \Psi^{-1}) (d\boldsymbol \varpi) \Big\}
$$
$$
= \exp\Big\{ \int_{\bbR\times \bbC^p}\Big[e^{\imag \hspace{0.25mm}2 \hspace{0.25mm}\Re \big(  \sum^{n}_{k=1}{\mathbf u}^*_k \hspace{0.5mm}f_{-t_k}(\boldsymbol \varpi)\big)}- 1 -
\imag \hspace{0.25mm}2\hspace{0.25mm} \Re \Big(  \sum^{n}_{k=1}{\mathbf u}^*_k \hspace{0.5mm}f_{-t_k}(\boldsymbol \varpi)\Big)\Big]\widetilde\kappa(d\boldsymbol \varpi) \Big\}
$$
$$
= \bbE \exp \Big\{\imag \sum^{n}_{k=1}{\mathbf u}^*_k \widetilde{X}_H(-t_k) \Big\}.
$$
This shows $\widetilde X_H$ is time-reversible.\\

Now suppose $\widetilde X_H$ is time-reversible, i.e., that
$$
\Big\{\int_{\bbR^{p+1}} f_{-t}({\boldsymbol \varpi})\widetilde{N}(d{\boldsymbol \varpi})\Big\}_{t \in \bbR} \stackrel{{\mathcal L}}= \Big\{\int_{\bbR^{p+1}} f_{t}({\boldsymbol \varpi})\widetilde{N}(d{\boldsymbol \varpi})\Big\}_{t \in \bbR}.
$$
So, recall that $\widetilde{\kappa}(d {\boldsymbol \varpi})\equiv  dx \otimes \widetilde{\mu}(d{\mathbf z})$. Under condition \eqref{e:gt_is_minimal_for_X-tilde_general}, Proposition \ref{p:theorem_2.17_KabluchkoStoev} implies that there exists a measurable bijection $$
\Phi: \bbR^{p+1} \rightarrow \bbR^{p+1}, \quad {\boldsymbol \varpi} \mapsto \big( \Phi_1({\boldsymbol \varpi}),\Phi_2({\boldsymbol \varpi})\big)
\in \bbR \times \bbR^p,
$$
such that, for all $t \in \bbR$,
\begin{equation}\label{e:gt(x)z=g_t(phi)phi}
\Re (\widetilde g_t(x)\mathbf z) = \Re \Big(\widetilde g_{-t}\big(\Phi_1(x,\mathbf z)\big)\Phi_2(x,\mathbf z)\Big) \quad dx\otimes \widetilde{\mu}(d\mathbf z)\text{--a.e.}
\end{equation}
Moreover, by Proposition \ref{p:theorem_2.17_KabluchkoStoev}, the mapping $\Phi$ is a measure space isomorphism from the space $(\R\times \C^p, \mathcal{B}(\R\times \C^p), \widetilde \kappa )$ to itself. In particular, it also preserves the measure $\widetilde \kappa$, the same being true of $\Phi^{-1}$ (cf.\ \eqref{e:mubar1(A)=mubar2(Phi(A))}). Noting that the set $[0,1]$ has Lebesgue measure 1,
$$
\widetilde \mu(B) = \widetilde{\kappa}([0,1] \times B) = \widetilde \kappa \circ \Phi^{-1}([0,1] \times B), \quad \widetilde{\kappa}\big([0,1] \times (- A^{-1} \overline{A  B})\big)= \widetilde \mu\big(- A^{-1} \overline{A  B}\big).
$$
Our goal is to show that
\begin{equation}\label{e:kappa-tilde_circ_Phi^(-1)}
\widetilde \kappa \circ \Phi^{-1}([0,1] \times B)=\widetilde{\kappa}\big([0,1] \times (- A^{-1} \overline{A  B})\big),
\end{equation}
whence \eqref{e:mu(A^(-1)bar(Az))=mu(z)} is established. For this purpose, we need to conveniently reexpress $\Phi$.

So, for each $t$, let
$$
\widetilde{V}_t = \{ {\boldsymbol \varpi}: \eqref{e:gt(x)z=g_t(phi)phi} \textnormal{ holds at }{\boldsymbol \varpi} \} \in {\mathcal B}(\R \times \R^p),
$$
and set $\widetilde{V} = \bigcap_{t\in \mathbb Q} \widetilde{V}_t$. In particular, $\widetilde{\kappa}(\widetilde{V}_t^c)=0$. Fix a vector $(x_0,\mathbf z_0)\in \widetilde{V}$ with
\begin{equation}\label{e:x0>0}
x_0>0,
\end{equation}
and note that $A \mathbf z_0 \neq 0$ as a consequence of condition \eqref{e:A_invertible_mu(dz)_ae}. For notational simplicity, write
$$
\Phi_1=\Phi_1(x_0,\mathbf z_0), \quad \Phi_2=\Phi_2(x_0,\mathbf z_0).
$$
By \eqref{e:gt(x)z=g_t(phi)phi}, for all $t \in \mathbb Q$,
\begin{equation}\label{e:Re(gt(x))=Re(gt(phi))_x0z0}
 \Re \Big( \frac{e^{\imag t x_0}-1}{\imag x_0} \hspace{0.5mm}(x_0)^{-D}_+ A  \mathbf z_0\Big)
 =  \Re \Big( \frac{e^{-\imag t \Phi_1}-1}{\imag \Phi_1} \hspace{0.5mm}\big[(\Phi_1)^{-D}_{+}A  + (\Phi_1)^{-D}_{-}\overline{A} \big] \Phi_2\Big).
\end{equation}
By continuity in $t$, relation \eqref{e:Re(gt(x))=Re(gt(phi))_x0z0} holds for all $t\in \R$.  Taking derivatives in $t$, we find that for all integers $k\geq 0$,
\begin{equation}\label{e:Re(gt(x))=Re(gt(phi))_x0z0_derivs}
\Re \big((\imag x)^k e^{\imag t x_0}(x_0)^{-D}_{+}A  \mathbf z_0\big)=\Re \Big( -(-\imag \Phi_1)^k e^{-\imag t \Phi_1}\hspace{0.5mm}\big[(\Phi_1)^{-D}_{+}A  + (\Phi_1)^{-D}_{-}\overline{A} \big] \Phi_2\Big), \quad t \in \R.
\end{equation}
By taking $k=4$ and $k=0$, respectively, we arrive at the equalities
$$
x_0^4 \hspace{1mm}\Re \big( e^{\imag t x_0}(x_0)^{-D}_{+}A  \mathbf z_0\big) = \Phi_1^4 \hspace{1mm}\Re \Big( - e^{-\imag t \Phi_1}\hspace{0.5mm}\big[(\Phi_1)^{-D}_{+}A  + (\Phi_1)^{-D}_{-}\overline{A} \big] \Phi_2\Big)
$$
$$
=  \Phi_1^4 \hspace{1mm}\Re \big( e^{\imag t x}(x_0)^{-D}_{+}A  {\mathbf z}_{0}\big).
$$
In particular,
\begin{equation}\label{e:Phi_1=x0_or_Phi_1=-x0}
\textnormal{ either }\Phi_1= x_0 \textnormal{ or }\Phi_1=-x_0.
\end{equation}
So, in view of \eqref{e:Phi_1=x0_or_Phi_1=-x0}, first suppose $\Phi_1=x_0 > 0$. Then, from \eqref{e:Re(gt(x))=Re(gt(phi))_x0z0_derivs} with $k=0$, and by the invertibility of the matrix $(x_0)_+^{-D}$,
\begin{equation}\label{e:Re(e^(itx0)Az0)}
\Re \big( e^{\imag t x_0}A  \mathbf z_0\big)=\Re \big( - e^{-\imag t x}\hspace{0.5mm}A \Phi_2 \big),\quad t\in \R.
\end{equation}
Hence, by choosing $t =0$ in \eqref{e:Re(e^(itx0)Az0)}, we see that $\Re (A  \mathbf z_0)= \Re (-A \Phi_2)$. On the other hand, by choosing $t$ such that $e^{\imag t x_0}=\imag$ in \eqref{e:Re(e^(itx0)Az0)}, we see that $\Im (A  \mathbf z_0)= -\Im (-A \Phi_2)$. This implies that $A  \mathbf z_0 = - \overline{A \Phi_2}$. Hence,
\begin{equation}\label{e:Az_if_phi1_x}
\Phi_2(x_0,\mathbf z_0)=- A^{-1}\overline{A  \mathbf z_0},\quad \text{if }\Phi_1(x_0,\mathbf z_0)=x_0.
\end{equation}
Alternatively, suppose $\Phi_1=-x_0$ in \eqref{e:Phi_1=x0_or_Phi_1=-x0}. From \eqref{e:Re(gt(x))=Re(gt(phi))_x0z0_derivs} with $k=0$, we obtain
$$
\Re \big( e^{\imag t x_0}A  \mathbf z_0\big)=\Re \Big( - e^{\imag t x_0}\hspace{0.5mm}\overline A \Phi_2\Big),\quad t\in \R.
$$
As with relation \eqref{e:Re(e^(itx0)Az0)}, this implies that $\Re (A  \mathbf z_0) = \Re(-\overline A \Phi_2)$, $\Im (A  \mathbf z_0) = \Im(-\overline A \Phi_2)$ by choosing $t$ appropriately. In other words, $A  \mathbf z_0 =  -\overline A \Phi_2$.  Hence,
\begin{equation}\label{e:Az_if_phi1_-x}
\Phi_2(x_0,\mathbf z_0)= -\overline A^{-1}A  \mathbf z_0,\quad \text{if }\Phi_1(x_0,\mathbf z_0)=-x_0.
\end{equation}

Consequently, for each $(x,\mathbf z) \in \widetilde{V}$ with $x>0$ (namely, under condition \eqref{e:x0>0}), either \eqref{e:Az_if_phi1_x} or \eqref{e:Az_if_phi1_-x} holds.  An analogous argument shows that $|\Phi_1(x,\mathbf z)|=|x|$ for $x<0$. It also shows that, for each fixed $(x_0,\mathbf z_0)$ with $x_0<0$,
\begin{equation}\label{e:Phi2(x0,z0)_x0<0}
\Phi_2(x_0,\mathbf z_0)=\begin{cases}
- \overline A^{-1} A \overline{\mathbf z},& \text{if } \Phi_1(x_0,\mathbf z_0)=x_0;\\
-  A^{-1}{\overline A  \mathbf z},& \text{if }\Phi_1(x_0,\mathbf z_0)=-x_0,
\end{cases}
\qquad\text{when }x_0<0.
\end{equation}

So, define the sets $\widetilde{V}_{\pm}=\{(x,\mathbf z) \in \widetilde{V} :\Phi_1(x,\mathbf z) = \pm x\}$. In view of \eqref{e:Az_if_phi1_x}, \eqref{e:Az_if_phi1_-x} and \eqref{e:Phi2(x0,z0)_x0<0}, $\widetilde{V}_-$ and $\widetilde{V}_+$ partition $\widetilde{V}$. Now define the functions
$$
\Psi(s,\mathbf z) = \big(x,- \overline A^{-1}A  \overline{\mathbf z}\mathbf 1_{\{x<0\}} - A^{-1}\overline{A  \mathbf z}\mathbf 1_{\{x>0\}}\big),\quad \gamma(x,\mathbf z) = (-x,\overline{\mathbf z}).%
$$
Then, we can write
\begin{equation}\label{e:Phi(x,z)_decomp}
\Phi(x,\mathbf z) = \Psi(x,\mathbf z)\mathbf 1_{\widetilde{V}_+}(s,\mathbf z)+(\Psi\circ \gamma) (x,\mathbf z)\mathbf 1_{\widetilde{V}_-}(s,\mathbf z).
\end{equation}
Observe that $\Psi,\gamma$ are bijective, and also that
\begin{equation}\label{e:kappa-tilde_circ_gamma^(-1)}
\widetilde \kappa \circ \gamma^{-1}=\widetilde \kappa.
\end{equation}
So, consider any set $B \in {\mathcal B}(\R^p)$. For notational simplicity, write $[0,1]\times B= S_+ \cup S_-$, where $S_\pm = ([0,1]\times B)\cap \widetilde{V}_{\pm}$. Since $S_-$ and $S_+$ are disjoint, then
\begin{equation}\label{e:kappa-tilde_circ_Phi^(-1)(S+_U_S-)}
\widetilde \kappa \circ \Phi^{-1}(S_+ \cup S_-) = \widetilde \kappa \circ \Phi^{-1}(S_+) + \widetilde \kappa \circ \Phi^{-1}(S_-).
\end{equation}
Therefore, by relations \eqref{e:Phi(x,z)_decomp}, \eqref{e:kappa-tilde_circ_gamma^(-1)} and \eqref{e:kappa-tilde_circ_Phi^(-1)(S+_U_S-)},
$$
\widetilde \kappa([0,1]\times B) = \widetilde \kappa \circ \Phi^{-1}([0,1]\times B)=\widetilde \kappa \circ \Phi^{-1}(S_+ \cup S_-)
=\widetilde \kappa \circ \Psi^{-1}(S_+) + \widetilde \kappa\circ \gamma^{-1}\circ \Psi^{-1}(S_-)
$$
$$
= \widetilde \kappa \circ  \Psi^{-1}(S_+)+ \widetilde \kappa \circ \Psi^{-1}(S_-)
= \widetilde \kappa\circ \Psi^{-1}(B) =  \widetilde \kappa\big([0,1]\times(- A^{-1} \overline{A  B})\big).
$$
This shows \eqref{e:kappa-tilde_circ_Phi^(-1)}. Therefore, \eqref{e:mu(A^(-1)bar(Az))=mu(z)} holds. $\Box$\\

\section{On the uniqueness of stochastic integral representations}\label{s:uniqueness}

To characterize the uniqueness of stochastic integral representations, we first recap the concept of isomorphism between measurable spaces, as well as related notions.
\begin{definition} \label{def:isomorphism_Borel_space}Consider the following definitions.
\begin{itemize}
\item [$(i)$] An \textit{isomorphism between two measurable spaces} $(\overline{\Omega}_i,{\mathcal B}_i)$, $i = 1,2$, is a bijection $\Phi: \overline{\Omega}_1 \rightarrow \overline{\Omega}_2$ such that both $\Phi$ and $\Phi^{-1}$ are measurable.
\item [$(ii)$] A measurable space $(\overline{\Omega},{\mathcal B})$ is called a \textit{Borel space }if it is isomorphic (in the sense of $(i)$) to a complete separable metric space endowed with its Borel $\sigma$-algebra.
\item [$(iii)$] A Borel space endowed with a $\sigma$-finite measure is called a \textit{$\sigma$-finite Borel space}.
\item [$(iv)$] An \textit{isomorphism modulo null sets between two measure spaces} $(\overline{\Omega}_i,{\mathcal B}_i,\kappa_i)$, $i = 1,2$, is a bijection $\Phi:\overline{\Omega}_1 \backslash A_1 \rightarrow \overline{\Omega}_2 \backslash A_2$, where $A_1 \in {\mathcal B}_1$ and $A_2 \in {\mathcal B}_2$ are null sets, such that both $\Phi$ and $\Phi^{-1}$ are measurable and
    \begin{equation}\label{e:mubar1(A)=mubar2(Phi(A))}
    \kappa_1(A) = \kappa_2(\Phi(A))
    \end{equation}
    for all measurable $A \subseteq \overline{\Omega}_1 \backslash A_1$. Two isomorphisms $\Phi$, $\Psi$ are considered \textit{equal modulo null sets} if $\Phi({\boldsymbol \varpi})=\Psi({\boldsymbol \varpi})$ for $\kappa_1$-a.a.\ ${\boldsymbol \varpi} \in \overline{\Omega}_1$.
\end{itemize}
\end{definition}

In the following proposition, we establish the uniqueness of finite second moment stochastic integral representations of processes based on compensated Poisson random measures. Its proof is omitted since it is similar to that of Theorem 2.17 in Kabluchko and Stoev \cite{kabluchko:stoev:2016}, originally involving univariate integrals.
\begin{proposition}\label{p:theorem_2.17_KabluchkoStoev}
 For $\emptyset \neq T \subseteq \bbR$, let $X = \{X(t)\}_{t \in T}$ be an ID stochastic process with stochastic representation of the form \eqref{e:X(t)=int_ft(omega)M(domega)}. For $i=1,2$, let
\begin{equation}\label{e:f^(i)(t)_in_L2}
\{f^{(i)}_t\}_{t \in T} \subseteq L^2(\overline{\Omega}_i,{\mathcal B}_i,\kappa_i)
\end{equation}
be two minimal representations of $X$, where
\begin{equation}\label{e:Omega-bar}
\overline{\Omega}_i = \R \times \bbR^{q} \in {\mathcal B}(\bbR \times \bbR^{q}), \quad {\mathcal B}_i = {\mathcal B}(\overline{\Omega}_i).
\end{equation}
\begin{itemize}
\item [$(i)$] If $(\overline{\Omega}_1,{\mathcal B}_1, \kappa_1)$ is a ($\sigma$-finite) Borel space, then there is a measurable map $\Phi:\overline{\Omega}_2 \rightarrow \overline{\Omega}_1$ such that $\kappa_1 = \kappa_2 \circ \Phi^{-1}$ and, for all $t \in T$,
    $$
    f^{(2)}_t({\boldsymbol \varpi}) = f^{(1)}_t [\Phi({\boldsymbol \varpi})] \quad \textnormal{for $\kappa_2$-a.a.\ ${\boldsymbol \varpi} \in \overline{\Omega}_2$}.
    $$
\item [$(ii)$] If both $(\overline{\Omega}_i,{\mathcal B}_i, \kappa_i)$, $i = 1,2$, are ($\sigma$-finite) Borel spaces, then the mapping $\Phi$ in $(i)$ is a measure space isomorphism and it is unique modulo null sets.
\end{itemize}
\end{proposition}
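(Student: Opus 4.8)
The plan is to transcribe the argument of Kabluchko and Stoev \cite{kabluchko:stoev:2016}, Theorem 2.17, to the present multivariate, finite--second--moment Poisson setting; the only substantive change is that the jump vectors $f^{(i)}_t({\boldsymbol \varpi})$ now take values in $\bbR^p$ rather than $\bbR$. The organizing object is the \emph{co-spectral map}
$$
U_i : \overline{\Omega}_i \rightarrow (\bbR^p)^{T}, \qquad U_i({\boldsymbol \varpi}) := \big( f^{(i)}_t({\boldsymbol \varpi}) \big)_{t \in T}, \quad i = 1,2,
$$
and the invariant attached to the law of $X$ is the pushforward $\nu_i := \kappa_i \circ U_i^{-1}$, regarded as a measure on the path space $(\bbR^p)^{T}$ with its product $\sigma$-algebra. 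First I would show $\nu_1 = \nu_2$. For any finite tuple $t_1,\ldots,t_n \in T$ and ${\mathbf u}_1,\ldots,{\mathbf u}_n \in \bbR^p$, the joint characteristic function of $X(t_1),\ldots,X(t_n)$ has the infinitely divisible form \eqref{e:exp(int_psi(f)ds)}, written in the general notation of \eqref{e:X(t)=int_ft(omega)M(domega)}; it depends on the representation only through the image of $\kappa_i$ under ${\boldsymbol \varpi} \mapsto (f^{(i)}_{t_1}({\boldsymbol \varpi}),\ldots,f^{(i)}_{t_n}({\boldsymbol \varpi}))$. By the uniqueness of the L\'evy measure of an ID law on $\bbR^{np}$, these finite-dimensional projections of $\nu_1$ and $\nu_2$ coincide on $\bbR^{np}\setminus\{{\mathbf 0}\}$. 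Since each $\nu_i$ is $\sigma$-finite away from the zero path (because $\kappa_i(\{\|f^{(i)}_t\|>\varepsilon\})<\infty$ by the finite second moment), a monotone-class argument over the $\pi$-system of cylinders with bases bounded away from ${\mathbf 0}$ (with the reduction to a countable index set as in \cite{kabluchko:stoev:2016}) upgrades this to $\nu_1 = \nu_2 =: \nu$.

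Second, I would extract from minimality that each $U_i$ is injective modulo $\kappa_i$-null sets and avoids the zero path. Condition $(ii)$ of Definition \ref{def:minimality} gives $U_i \neq {\mathbf 0}$ $\kappa_i$-a.e., so $\nu$ is concentrated on $(\bbR^p)^{T}\setminus\{{\mathbf 0}\}$. Condition $(i)$ gives $\sigma(U_i) = {\mathcal B}_i \mod \kappa_i$; since the Borel $\sigma$-algebra of a Borel space is countably generated and separates points, a measurable map whose generated $\sigma$-algebra exhausts ${\mathcal B}_i$ modulo null sets must itself separate points off a $\kappa_i$-null set, i.e.\ $U_i$ is injective mod null. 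The Lusin--Souslin theorem then applies: a one-to-one Borel map out of a Borel space has Borel range and a Borel-measurable inverse. Hence, after deleting null sets, $U_1$ is a Borel isomorphism of $\overline{\Omega}_1$ onto a Borel set $R_1 \subseteq (\bbR^p)^{T}$ that carries all of $\nu$, with measurable inverse $U_1^{-1}$.

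Third, I would assemble $\Phi$. For part $(i)$, where only $(\overline{\Omega}_1,{\mathcal B}_1,\kappa_1)$ is assumed Borel, set $\Phi := U_1^{-1}\circ U_2 : \overline{\Omega}_2 \rightarrow \overline{\Omega}_1$. As $U_2$ pushes $\kappa_2$ to $\nu$ and $\nu(R_1^{c})=0$, this composition is defined $\kappa_2$-a.e.\ and measurable; comparing coordinates yields $f^{(2)}_t = f^{(1)}_t \circ \Phi$ for every $t \in T$, while the identity $\kappa_2 \circ \Phi^{-1} = \kappa_1$ follows by direct computation from $\kappa_i \circ U_i^{-1} = \nu$ and the injectivity of $U_1$. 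For part $(ii)$, when both spaces are Borel, $U_2$ is likewise a Borel isomorphism mod null, so $\Phi$ is a bijection mod null with measurable inverse $U_2^{-1}\circ U_1$, a measure-space isomorphism in the sense of Definition \ref{def:isomorphism_Borel_space}$(iv)$. Uniqueness mod null is then immediate: if $\Phi'$ also satisfies $f^{(1)}_t\circ\Phi' = f^{(2)}_t = f^{(1)}_t\circ\Phi$ for all $t$, then $U_1\circ\Phi' = U_1\circ\Phi$, and injectivity of $U_1$ forces $\Phi' = \Phi$ mod null.

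The hard part will be the descriptive-set-theoretic heart of the second step: converting the $\sigma$-algebra-generating minimality condition $(i)$ into genuine injectivity of the co-spectral map mod null, and then invoking Lusin--Souslin to obtain a Borel range and a measurable inverse. The multivariate upgrade itself is routine, entering only through the $\bbR^{np}$-valued L\'evy-measure uniqueness in the first step; but the care required in handling the infinite-product path space near the zero element (ensuring $\sigma$-finiteness of $\nu$ off ${\mathbf 0}$ and effecting the reduction to a countable index set so that the target is a genuine Borel space) is exactly where the argument must be made precise, as in Kabluchko and Stoev \cite{kabluchko:stoev:2016}.
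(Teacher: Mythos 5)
Your proposal is correct and follows essentially the same route as the paper, which omits the proof precisely because it is the argument of Kabluchko and Stoev (2016), Theorem 2.17, transcribed to the multivariate, finite--second--moment compensated-Poisson setting: equality of the path-space L\'evy measures via uniqueness of the L\'evy measure of the finite-dimensional ID laws, minimality converted into injectivity mod null of the co-spectral map (after the reduction to a countable index set), Lusin--Souslin for the measurable inverse, and composition to produce $\Phi$ together with its uniqueness. The only point deserving the care you already flag is the Blackwell--Mackey-type step turning $\sigma(\{f_t\})={\mathcal B}\bmod\kappa$ into injectivity off a null set, which is exactly where the Borel-space hypothesis is used, as in the cited theorem.
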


\section{Auxiliary results}

As pointed out in Remark \ref{r:YHtilde_proper}, there are instances of ofLm -- hence, stochastic processes whose distributions are proper except at the origin -- whose stochastic integral representations have deficient rank over a positive measure set. In fact, for $x \neq 0$, define the integrand
$$%
L^2(\bbR,M(p,\bbR)) \ni \widetilde{h}_t(x) = \Re \widetilde{g}_t(x) - \Im \widetilde{g}_t(x)
$$%
$$
=\Big\{\frac{\sin(tx)}{x}-\frac{(1-\cos(tx))}{x}\Big\} |x|^{-D} \Re(A)
$$
\begin{equation}\label{e:ht=Regt-Imgt_def}
-  \Big\{\frac{\sin(tx)}{x}+ \frac{(1-\cos(tx))}{x}\Big\}\big\{x^{-D}_{+}-x^{-D}_{-}\big\}\Im(A), \quad t \in \bbR.
\end{equation}
Also, in \eqref{e:chf_harm_gt}, suppose $\mu_{\bbR^{2p}}(d{\mathbf z}) = \delta_{{\mathbf 1}}(d{\mathbf z})$, where ${\mathbf 1} = (1,\hdots,1)^* \in \bbR^{2p}$. Then, for ${\mathbf 1} \in \bbR^{p}$ and by relation \eqref{e:E_int_isometry}, we can express the rhofLm $\widetilde{X}_H$ as
\begin{equation}\label{e:EYH(t)YH(T)*_pos_def_delta1}
\bbE \widetilde{X}_H(t)\widetilde{X}_H(t)^* =  \int_{\bbR} \widetilde{h}_t(x){\mathbf 1}{\mathbf 1}^* \widetilde{h}_t(x)^* d x, \quad t \neq 0.
\end{equation}
Note that the integrand on the right-hand side of \eqref{e:EYH(t)YH(T)*_pos_def_delta1} has rank 1 a.e. However, the properness condition \eqref{e:EX2-tildeH(t)>0} may still be satisfied, as we show next.
\begin{lemma}\label{l:Ytilde_proper}
For $p = 2$, let $A = \Re(A) + \imag \Im(A)\in M(2,\bbC)$ be such that $\Re(A){\mathbf 1} = {\mathbf 0}$ and $\Im(A) = I$. Let $\widetilde{X}_H = \{\widetilde{X}_H(t)\}_{t \in \bbR}$ be as in \eqref{e:EYH(t)YH(T)*_pos_def_delta1}. Then, there exist $- \frac{1}{2} < d_1,d_2 < \frac{1}{2}$ such that, for $D = \textnormal{diag}(d_1,d_2) = H - (1/2)I$, the matrix $\bbE \widetilde{X}_H(t)\widetilde{X}_H(t)^*$ has full rank matrix for any $t \neq 0$.
\end{lemma}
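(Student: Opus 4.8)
The plan is to exploit the two hypotheses on $A$ to collapse the vector $w_t(x):=\widetilde{h}_t(x){\mathbf 1}$ onto an explicit one-parameter family of directions, and then to show that integrating the associated rank-one matrices over $x$ yields a positive definite matrix. First I would substitute $\Re(A){\mathbf 1}={\mathbf 0}$ and $\Im(A)=I$ into the expression \eqref{e:ht=Regt-Imgt_def} for $\widetilde{h}_t$. Since $D=\textnormal{diag}(d_1,d_2)$ is diagonal, one has $x^{-D}_+-x^{-D}_-=\textnormal{sign}(x)\,|x|^{-D}$ and $|x|^{-D}{\mathbf 1}=(|x|^{-d_1},|x|^{-d_2})^*$. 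The first summand of $\widetilde{h}_t$ carries the factor $\Re(A){\mathbf 1}={\mathbf 0}$ and therefore vanishes, leaving
$$
w_t(x)=\widetilde{h}_t(x){\mathbf 1}=-\,b(t,x)\,\textnormal{sign}(x)\begin{pmatrix}|x|^{-d_1}\\ |x|^{-d_2}\end{pmatrix},\qquad b(t,x):=\frac{\sin(tx)+1-\cos(tx)}{x}.
$$

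With $w_t$ in hand, I would rewrite the covariance as the Gram-type integral $\bbE\widetilde{X}_H(t)\widetilde{X}_H(t)^*=\int_{\bbR}w_t(x)w_t(x)^*\,dx$ (this is exactly \eqref{e:EYH(t)YH(T)*_pos_def_delta1}). This symmetric, positive semidefinite matrix has full rank if and only if there is no nonzero $c=(c_1,c_2)^*\in\bbR^2$ with $c^*w_t(x)=0$ for a.e.\ $x$, because $c^*\big(\int w_t w_t^*\big)c=\int\big(c^*w_t(x)\big)^2dx$ vanishes precisely when $c^*w_t\equiv 0$ a.e. Thus the task reduces to showing that the scalar function $x\mapsto c^*w_t(x)=-b(t,x)\,\textnormal{sign}(x)\big(c_1|x|^{-d_1}+c_2|x|^{-d_2}\big)$ can vanish a.e.\ only when $c={\mathbf 0}$.

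The execution rests on two elementary facts. First, for each fixed $t\neq 0$ the numerator of $b(t,x)$ equals $1+\sqrt{2}\,\sin(tx-\pi/4)$, a nonconstant trigonometric function, so $b(t,\cdot)$ vanishes only on an isolated (hence null) set and $b(t,x)\,\textnormal{sign}(x)\neq 0$ for a.e.\ $x$. Second, I would \emph{choose} $d_1\neq d_2$ in $(-1/2,1/2)$; then $|x|^{-d_1}$ and $|x|^{-d_2}$ are linearly independent as functions, so $c_1|x|^{-d_1}+c_2|x|^{-d_2}=0$ a.e.\ forces $c_1=c_2=0$. Combining the two, $c^*w_t\equiv 0$ a.e.\ implies $c={\mathbf 0}$, which gives full rank for every $t\neq 0$. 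To close the argument I would verify finiteness of the matrix entries: near $x=0$ one has $b(t,x)\to t$, so each integrand behaves like $|x|^{-(d_i+d_j)}$ (integrable since $d_i+d_j<1$), while as $|x|\to\infty$, $b(t,x)=O(|x|^{-1})$ makes the integrand $O(|x|^{-2-(d_i+d_j)})$ (integrable since $d_i+d_j>-1$); both hold for $d_1,d_2\in(-1/2,1/2)$.

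The only genuinely nonroutine steps are confirming that the zero set of $b(t,\cdot)$ is null and that the degeneracy condition is equivalent to linear dependence of the two power functions; both are elementary but must be stated carefully. The conceptual crux — and what I expect to be the main point rather than an obstacle — is the observation that $\Re(A){\mathbf 1}={\mathbf 0}$ collapses $w_t(x)$ onto the line spanned by $(|x|^{-d_1},|x|^{-d_2})^*$, whose \emph{direction} genuinely varies with $x$ exactly when $d_1\neq d_2$; this variation is what upgrades an a.e.\ rank-one integrand to a full-rank covariance, so that choosing any distinct $d_1,d_2\in(-1/2,1/2)$ establishes the claim.
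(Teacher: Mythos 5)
Your proposal is correct, and it reaches the conclusion by a somewhat different route than the paper. Both arguments begin the same way: using $\Re(A)\mathbf{1}=\mathbf{0}$, $\Im(A)=I$ and $x^{-D}_+ - x^{-D}_- = \mathrm{sign}(x)\,|x|^{-D}$ to collapse $\widetilde{h}_t(x)\mathbf{1}$ onto the direction $(|x|^{-d_1},|x|^{-d_2})^*$ weighted by the scalar $b(t,x)$, so that $\bbE\widetilde{X}_H(t)\widetilde{X}_H(t)^*=\int_{\bbR} w_t(x)w_t(x)^*\,dx$. Where you diverge is in how nondegeneracy is established. The paper computes the matrix entries explicitly: after dropping the odd cross term $\sin(tx)(1-\cos(tx))/x^2$, each entry scales as $|t|^{1+d_i+d_j}\beta(d_i+d_j)$ with $\beta(\delta)=\int_{\bbR}2(1-\cos y)|y|^{-2-\delta}dy$, so the determinant equals $|t|^{2+2(d_1+d_2)}\bigl(\beta(2d_1)\beta(2d_2)-\beta^2(d_1+d_2)\bigr)$, and the fact that this cannot vanish identically in $(d_1,d_2)$ is settled by citing Remark 4.1 of Didier and Pipiras (2011). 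You instead argue abstractly that a finite Gram matrix $\int w_t w_t^*$ is singular iff some fixed nonzero $c$ satisfies $c^*w_t(x)=0$ a.e., and you rule this out by noting that $b(t,\cdot)$ vanishes only on a null set (the numerator is $1+\sqrt{2}\sin(tx-\pi/4)$) and that $|x|^{-d_1}$, $|x|^{-d_2}$ are linearly independent once $d_1\neq d_2$; your integrability checks at $0$ and $\infty$ are also what is needed. This buys a self-contained proof with no external citation and a slightly stronger conclusion (full rank for \emph{every} $t\neq 0$ and \emph{every} pair $d_1\neq d_2$ in $(-1/2,1/2)$, not merely existence of one such pair), whereas the paper's computation has the side benefit of exhibiting the covariance entries explicitly through $\beta$. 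Note that your linear-independence step is exactly the equality case of the Cauchy--Schwarz inequality $\beta^2(d_1+d_2)\le\beta(2d_1)\beta(2d_2)$ that underlies the paper's determinant condition, so the two arguments rest on the same underlying fact even though yours makes it explicit and elementary.
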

\begin{proof}Fix $t \neq 0$. Let
\begin{equation}
M:= \Im(A){\mathbf 1}{\mathbf 1}^* \Im(A)^*=
\left(\begin{array}{cc}
1 & 1\\
1 & 1
\end{array}\right).
\end{equation}
For $\widetilde{h}_t$ as in \eqref{e:ht=Regt-Imgt_def}, expression \eqref{e:EYH(t)YH(T)*_pos_def_delta1} implies that
$$
\bbE \widetilde{X}_H(t)\widetilde{X}_H(t)^* %
= \int_{\bbR} \Big\{  \Big(\frac{\sin(t x)}{x}\Big)^2 + \Big( \frac{1- \cos(t x)}{x}\Big)^2 \Big\} |x|^{-D} M |x|^{-D^*} d x
$$
$$
= \int_{\bbR} 2\frac{(1- \cos(t x))}{x^2}  |x|^{-D} M |x|^{-D^*} d x,
$$
where we make use of the fact that $\frac{\sin(t x)(1- \cos(t x))}{x^2}$ is an odd function. For $- 1 < \delta < 1$, we can write
$$
\bbR \ni \int_{\bbR}  2\frac{(1- \cos(t x))}{x^2}  |x|^{-\delta} d x
= |t|^{1+\delta}\int_{\bbR}  2\frac{(1- \cos(y))}{y^2}  |y|^{-\delta} d y =: |t|^{1+\delta}\beta(\delta),
$$
where we make the change of variable $y = tx$. Then,
$$
\det\big(\bbE \widetilde{X}_H(t)\widetilde{X}_H(t)^*\big)  = |t|^{2+2(d_1+d_2)}\big(\beta(2d_1)\beta(2d_2) - \beta^2(d_1 + d_2)\big).
$$
In particular, for all $t \neq 0$, $\det\big(\bbE \widetilde{X}_H(t)\widetilde{X}_H(t)^*\big) = 0$  if and only if
$$
\beta(2d_1)\beta(2d_2) - \beta^2(d_1 + d_2) = 0.
$$
However, the latter condition cannot hold for every $- \frac{1}{2} < d_1, d_2 < \frac{1}{2}$ (cf.\ Remark 4.1 in Didier and Pipiras \cite{didier:pipiras:2011}).  This establishes the claim. $\Box$\\
\end{proof}

The following proposition is used in the proof of Proposition \ref{p:XH_harm_YH_ma}. It establishes orthogonal-increment random measures that can be used to yield integral representations for maofLm and rhofLm (cf.\ Rozanov \cite{rozanov:1967},  \textsection1.3).
\begin{proposition}\label{p:spec}
\begin{itemize}
\item [(i)] Let ${\mathcal M}(ds)$ be the random measure \eqref{e:int_f(s)M(ds)} under the assumption \eqref{e:int_zz*=I}. Then, the random measure
$$
\Phi_{{\mathcal M}} (a,b]:=  \frac{1}{2\pi}\int_\R \frac{e^{\imag sb}-e^{\imag sa}}{ \imag s}{\mathcal M}(ds), \quad -\infty < a \leq b < \infty,
$$
defined on intervals, naturally extends to a $\C^p$-valued orthogonal-increment random measure on $\mathcal{B}(\R)$. In particular, \begin{equation}\label{e:EPhi_M(B)=0}
\E \Phi_{\mathcal M} (B) =0, \quad \textnormal{for all }B\in\mathcal{B}(\R) \textnormal{ with }\eta(B)<\infty,
\end{equation}
and
\begin{equation}\label{e:EPhi_M(0,1]^2}
\bbE \Phi_{\mathcal M}(dy) \Phi_{\mathcal M}(dy)^* = dy \times I.
\end{equation}
Furthermore, for $f\in L^2(\R;M(p,\R))$,
\begin{equation}\label{e:parseval1}
\int_\R f(s) {\mathcal M}(ds) = \int_\R \widehat{f}(x) \Phi_{{\mathcal M}}(dx) \quad \text{a.s.},
\end{equation}
where, in the Parseval-type relation \eqref{e:parseval1} we define, entry-wise,
\begin{equation}\label{e:f-hat=F(f)}
\widehat{f} = {\mathcal F}(f).
\end{equation}
\item [(ii)] Let $\widetilde{{\mathcal M}}(dx)$ be the random measure \eqref{e:int_f(xi)M(dxi)} under the assumption \eqref{e:int_RezRez*=sigma^2_I=int_ImzImz*}. Then, the expression
\begin{equation}\label{e:Phi_M-tilde(a,b]}
\Phi_{\widetilde{{\mathcal M}}}(a,b]:= \frac{1}{2\pi } \int_\R \frac{e^{-\imag xa}-e^{-\imag xb}}{\imag x} \widetilde{{\mathcal M}}(dx),
\end{equation}
defined on intervals, naturally extends to an $\R^p$-valued orthogonal-increment random measure on $\mathcal{B}(\R)$.  In particular, relations \eqref{e:EPhi_M(B)=0} and \eqref{e:EPhi_M(0,1]^2} also hold for $\Phi_{\widetilde{{\mathcal M}}}$. Furthermore, for $\widehat{f}\in L^2_{\textnormal{Herm}}(\R)$,
\begin{equation}\label{e:parseval2}
\int_\R \widehat f(x) \widetilde{{\mathcal M}}(dx) = \int_\R f(s) \Phi_{\widetilde{{\mathcal M}}}(ds) \quad \text{a.s.},
\end{equation}
where $f$ satisfies \eqref{e:f-hat=F(f)}.
\end{itemize}
\end{proposition}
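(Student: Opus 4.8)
The plan is to prove part $(i)$ in full and then obtain part $(ii)$ by the same argument with the harmonizable isometry replacing the time-domain one. The starting observation is that the kernels defining $\Phi_{{\mathcal M}}$ are Fourier transforms of indicators: with the convention of \eqref{e:Prop3.1_DP2011}, $\frac{e^{\imag s b}-e^{\imag s a}}{\imag s}=\int_a^b e^{\imag s u}\,du={\mathcal F}(1_{(a,b]})(s)$, so that $\Phi_{{\mathcal M}}(a,b]$ is, up to the prefactor $\tfrac{1}{2\pi}$, the ${\mathcal M}$-integral of ${\mathcal F}(1_{(a,b]})$. Because these kernels are complex, I would first record that the isometry \eqref{e:E_int_isometry} extends by bilinearity to complex matrix integrands, giving $\bbE(\int g_1\,{\mathcal M})(\int g_2\,{\mathcal M})^*=\int g_1(s)g_2(s)^*\,ds$ under \eqref{e:int_zz*=I}; this is seen by writing $g_j=\Re g_j+\imag\,\Im g_j$ and applying \eqref{e:E_int_isometry} to the real and imaginary pieces.

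Next I would compute the increment covariance directly. Using the complexified isometry,
$$
\bbE\,\Phi_{{\mathcal M}}(a,b]\,\Phi_{{\mathcal M}}(c,d]^*=\frac{1}{(2\pi)^2}\int_{\bbR}{\mathcal F}(1_{(a,b]})(s)\,\overline{{\mathcal F}(1_{(c,d]})(s)}\,ds\cdot I,
$$
and by Plancherel's theorem the scalar integral equals a normalization constant times $\eta\big((a,b]\cap(c,d]\big)$. For disjoint intervals this vanishes, which is exactly the orthogonality of increments, and taking $(c,d]=(a,b]$ yields \eqref{e:EPhi_M(0,1]^2}; the mean-zero statement \eqref{e:EPhi_M(B)=0} is immediate since $\widetilde N$ is compensated, so ${\mathcal M}$ and hence $\Phi_{{\mathcal M}}$ have mean zero. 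Finite additivity over adjacent intervals is clear from linearity, and the control measure just identified is Lebesgue measure $\eta$; the extension of $\Phi_{{\mathcal M}}$ from the algebra of half-open intervals to a genuine $\C^p$-valued orthogonal-increment random measure on ${\mathcal B}(\bbR)$ then follows from the standard $L^2$-isometric construction (Rozanov \cite{rozanov:1967}, \textsection1.3; Doob \cite{doob:1953}).

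For the Parseval identity \eqref{e:parseval1} I would argue by isometry and density. The map $f\mapsto\int f\,{\mathcal M}$ is a linear isometry of $L^2(\bbR,M(p,\bbR))$ into $L^2(\bbP)$ by \eqref{e:E_int_isometry}--\eqref{e:int_zz*=I}; the map $h\mapsto\int h\,\Phi_{{\mathcal M}}$ is a linear isometry on the Hermitian space \eqref{e:L^2_Herm} by the increment covariance just computed; and $f\mapsto\widehat f={\mathcal F}(f)$ is, by Plancherel, a bijective isometry of $L^2(\bbR,M(p,\bbR))$ onto \eqref{e:L^2_Herm} (if $f$ is real then $\widehat f(-x)=\overline{\widehat f(x)}$, and Fourier inversion supplies surjectivity). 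It then suffices to verify $\int f\,{\mathcal M}=\int\widehat f\,\Phi_{{\mathcal M}}$ on the dense class of simple kernels $f=1_{(a,b]}C$, $C\in M(p,\bbR)$, where the left side is $C\,{\mathcal M}\big((a,b]\big)$ and the right side reduces, via the defining formula for $\Phi_{{\mathcal M}}$ and Fourier inversion, to the same random vector; the identity extends to all $f$ by continuity of the two isometries.

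Part $(ii)$ is entirely parallel: the kernel $\frac{e^{-\imag x a}-e^{-\imag x b}}{\imag x}=\int_a^b e^{-\imag x u}\,du$ is Hermitian in $x$, since it satisfies $\widetilde g(-x)=\overline{\widetilde g(x)}$, so $\Phi_{\widetilde{{\mathcal M}}}$ is indeed $\bbR^p$-valued; the role of \eqref{e:E_int_isometry} is now played by \eqref{e:E_int_isometry_harm_f(xi)_cross}, with \eqref{e:int_RezRez*=sigma^2_I=int_ImzImz*} supplying the two identity matrices that collapse the increment covariance to $\eta\,I$, and the Parseval relation \eqref{e:parseval2} follows by the same isometry-and-density scheme with the time and Fourier domains interchanged. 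I expect the main obstacle to be this Parseval step rather than the covariance computation: upgrading equality of second-order structure to an almost-sure identity of stochastic integrals requires care in the density argument and in the complex/matrix-valued bookkeeping, and one must confirm that ${\mathcal F}$ maps $L^2(\bbR,M(p,\bbR))$ onto exactly the Hermitian space \eqref{e:L^2_Herm} so that the two isometries share a common range.
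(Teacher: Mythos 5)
Your proposal is correct in outline, but it takes a more self-contained route than the paper. The paper's own proof handles the substantive part of the statement by citation: the extension of $\Phi_{{\mathcal M}}$ (and $\Phi_{\widetilde{{\mathcal M}}}$) to an orthogonal-increment random measure and the Parseval identities \eqref{e:parseval1}, \eqref{e:parseval2} are attributed to a direct adaptation of Theorem 3.5 in Marquardt and Stelzer (2007), itself a multivariate version of Rozanov's Theorem 2.1, and the paper only verifies \eqref{e:EPhi_M(B)=0} and \eqref{e:EPhi_M(0,1]^2} directly -- by writing the characteristic function of $\Phi_{\widetilde{\mathcal M}}(a,b]$, differentiating at ${\mathbf u}={\mathbf 0}$, and invoking the explicit value of $\int_{\bbR}\big|\tfrac{e^{\imag x y}-1}{\imag x}\big|^2dx$; orthogonality of increments is dispatched with a one-line appeal to Parseval. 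You instead obtain the mean and increment covariance from the (complexified) isometry \eqref{e:E_int_isometry} under \eqref{e:int_zz*=I} together with Plancherel, get orthogonality for free from the same computation, and then prove the Parseval relation yourself by the isometry-and-density argument on indicator kernels, with \eqref{e:E_int_isometry_harm_f(xi)_cross} and \eqref{e:int_RezRez*=sigma^2_I=int_ImzImz*} playing the analogous role in part $(ii)$. In substance this is exactly the content of the cited Rozanov/Marquardt--Stelzer construction made explicit, so your approach buys self-containedness at the cost of length, while the paper's buys brevity; your use of the isometry instead of characteristic-function differentiation for the mean and covariance is an equivalent and arguably cleaner verification.

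Two bookkeeping points deserve attention, though neither is a gap in the method. First, the ``normalization constant'' you leave unspecified in the Plancherel step is precisely what must be pinned down to land on \eqref{e:EPhi_M(0,1]^2} as stated, given the $\tfrac{1}{2\pi}$ prefactor in the definition of $\Phi_{{\mathcal M}}$ and the paper's convention ${\mathcal F}(f)(x)=\int e^{\imag sx}f(s)\,ds$ without a $\sqrt{2\pi}$; the paper fixes this constant by quoting the identity from Taqqu. Second, the step where ``the right side reduces, via the defining formula for $\Phi_{{\mathcal M}}$ and Fourier inversion, to the same random vector'' is where the actual work of the cited theorem lives: one must approximate $\widehat f$ by simple functions, use the interval values of $\Phi_{{\mathcal M}}$, and pass to the $L^2$ limit, and the sign of the exponent in the inversion has to be tracked (note that the frequency-domain kernel in \eqref{e:Phi_M-tilde(a,b]} is written with $e^{-\imag xa}$ precisely so that this orientation comes out right). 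If you carry out that limit argument carefully, your proof is complete and consistent with the paper's.
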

\begin{proof}
Statement $(i)$ can be shown by means of a direct adaptation of the statement of Theorem 3.5 in Marquardt and Stelzer \cite{marquardt:stelzer:2007}, which in turn is based on a multivariate generalization of Rozanov \cite{rozanov:1967},  Theorem 2.1. So, we only show \eqref{e:EPhi_M(B)=0} and \eqref{e:EPhi_M(0,1]^2}. It suffices to establish the statement over intervals $(a,b]$, $- \infty < a \leq b < \infty$. Note that the characteristic function of $\Phi_{\widetilde{{\mathcal M}}}(a,b]$ at ${\mathbf u } \in \bbR^p$ is given by
$$
\exp\Big\{ \int_{\bbR}\int_{\bbR^p} \Big( e^{\imag {\mathbf u}^* \frac{1}{2\pi}\big(\frac{e^{\imag s b}-e^{\imag s a}}{\imag s}\big){\mathbf z}}
- 1 - \imag {\mathbf u}^* \frac{1}{2\pi}\Big(\frac{e^{\imag s b}-e^{\imag s a}}{\imag s}\Big){\mathbf z}\Big) \mu(d{\mathbf z})ds\Big\}.
$$
By taking the first derivative with respect to ${\mathbf u}$ and setting ${\mathbf u} = {\mathbf 0}$, we conclude that $\bbE \Phi_{\widetilde{{\mathcal M}}}(a,b] = {\mathbf 0}$, which proves \eqref{e:EPhi_M(B)=0}. Moreover, by taking the second derivative with respect to ${\mathbf u}$ and setting ${\mathbf u} = {\mathbf 0}$, we obtain
$$
\bbE \Phi_{\widetilde{{\mathcal M}}}(a,b]\Phi_{\widetilde{{\mathcal M}}}(a,b]^* = \frac{1}{4\pi^2}\Big(\int_{\bbR} \Big|\frac{e^{\imag s b}-e^{\imag s a}}{\imag s}\Big|^2 ds \Big)
\Big(\int_{\bbR^p} {\mathbf z}{\mathbf z}^* \mu(d{\mathbf z})\Big).
$$
Thus, by setting $a = 0$ and $b = y$ under condition \eqref{e:int_zz*=I}, we further conclude that \eqref{e:EPhi_M(0,1]^2} holds, where we make use of the fact that
$\frac{1}{4 \pi^2}\int_{\bbR} \big| \frac{e^{\imag x y}-1}{\imag x}\big|^2 dx = y$ (see Taqqu \cite{taqqu:2003}, expression (9.7)). Likewise, the orthogonality of the increments can be verified by Parseval's theorem.

Statement $(ii)$ can be established by a similar argument. In particular, the random measure \eqref{e:Phi_M-tilde(a,b]} is $\bbR^p$-valued because the integrand $\frac{e^{-\imag xa}-e^{-\imag xb}}{\imag x}$ is a Hermitian function. $\Box$\\
\end{proof}

\begin{remark}
The integrals on the right-hand side of \eqref{e:parseval1} and \eqref{e:parseval2} with respect to the orthogonal-increment random measures $\Phi_{\widetilde{{\mathcal M}}}$, $\Phi_{{{\mathcal M}}}$ are interpreted in the traditional $L^2$ sense. Note that despite that $\Phi_{\widetilde{{\mathcal M}}}$, $\Phi_{{{\mathcal M}}}$ have uncorrelated increments,  $\widetilde {\mathcal M} \neq \Phi_{{{\mathcal M}}}$, and $ {\mathcal M} \neq \Phi_{{\widetilde{\mathcal M}}}$.
\end{remark}

The following lemma is used in Example \ref{ex:gaussian_jumps_symmetry}.
\begin{lemma}\label{l:gaussian_jumps_symmetry}
Under the assumptions laid out in Example \ref{ex:gaussian_jumps_symmetry}, expression \eqref{e:symmetric_orthog_condition} holds.
\end{lemma}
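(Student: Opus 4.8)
The plan is to derive \eqref{e:symmetric_orthog_condition} from the two necessary conditions for time reversibility supplied by Theorem~\ref{t:maofLm_time-reversibility}, and then to upgrade the $\mu$-almost-everywhere statements they contain to genuine matrix identities by exploiting the nondegeneracy of $\Sigma$. Writing $P := M_+^{-1}M_-$ (well defined since $M_+,M_-\in GL(p,\bbR)$ under the assumptions in force), I would first note that time reversibility of $X_H$ forces conditions $(a)$ and $(b)$ of Theorem~\ref{t:maofLm_time-reversibility}. Condition $(a)$, namely \eqref{e:M-^(-1)*M+*z=M+^(-1)*M-*z}, reads $P^{-1}{\mathbf z} = P{\mathbf z}$ $\mu(d{\mathbf z})$--a.e.; left-multiplying by $P$ gives
\begin{equation}\label{e:P^2z=z}
(P^2 - I){\mathbf z} = {\mathbf 0} \quad \mu(d{\mathbf z})\textnormal{--a.e.}
\end{equation}
Condition $(b)$, namely \eqref{e:mu(matrix*dx)=mu(dz)}, states that the map ${\mathbf z}\mapsto P{\mathbf z}$ preserves $\mu$; taking second moments (exactly the change of variable performed in Example~\ref{ex:time_revers_ofBm_vs_maofLm}) yields
\begin{equation}\label{e:PSigmaP}
P\Sigma P^* = \int_{\bbR^p}(P{\mathbf z})(P{\mathbf z})^*\mu(d{\mathbf z}) = \int_{\bbR^p}{\mathbf w}{\mathbf w}^*\mu(d{\mathbf w}) = \Sigma.
\end{equation}

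Next I would upgrade \eqref{e:P^2z=z} to the matrix identity $P^2 = I$, which is the only point where the full rank of $\Sigma$ is genuinely needed. Setting $K := P^2 - I$, relation \eqref{e:P^2z=z} gives ${\mathbf z}^* K^* K{\mathbf z} = 0$ for $\mu$-a.a.\ ${\mathbf z}$, so integrating and using ${\mathbf z}^* K^* K{\mathbf z} = \tr(K^* K{\mathbf z}{\mathbf z}^*)$ produces $\tr(K^* K\Sigma) = \tr\big((K\Sigma^{1/2})^*(K\Sigma^{1/2})\big) = 0$. Hence $K\Sigma^{1/2} = {\mathbf 0}$, and since $\Sigma^{1/2}\in GL(p,\bbR)$ I conclude $K = {\mathbf 0}$, i.e.\ $P^2 = I$. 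In particular $P^{-1} = P$, so condition $(a)$ in fact holds as the matrix equality $M_-^{-1}M_+ = M_+^{-1}M_- = P$.

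Finally I would set $O := \Sigma^{-1/2}P\Sigma^{1/2}$, so that $P = \Sigma^{1/2}O\Sigma^{-1/2}$. Using that $\Sigma^{\pm 1/2}$ are symmetric together with \eqref{e:PSigmaP}, $OO^* = \Sigma^{-1/2}P\Sigma P^*\Sigma^{-1/2} = \Sigma^{-1/2}\Sigma\Sigma^{-1/2} = I$, so $O$ is orthogonal; and $O^2 = \Sigma^{-1/2}P^2\Sigma^{1/2} = I$, so $O$ is an involution, whence $O = O^{-1} = O^*$ is symmetric. Combining this with the matrix equality of the previous paragraph gives $M_+^{-1}M_- = M_-^{-1}M_+ = \Sigma^{1/2}O\Sigma^{-1/2}$ with $O$ symmetric orthogonal, which is precisely \eqref{e:symmetric_orthog_condition}. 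The only delicate step is the passage from the $\mu$-a.e.\ involution property \eqref{e:P^2z=z} to the global identity $P^2 = I$; this is bridged by the nondegeneracy of $\Sigma$, which guarantees that $\textnormal{supp}(\mu)$ spans $\bbR^p$ and hence that a matrix annihilating ${\mathbf z}$ $\mu$-a.e.\ must vanish.
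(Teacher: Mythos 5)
Your proof is correct and follows essentially the same route as the paper: condition $(b)$ yields $P\Sigma P^*=\Sigma$ for $P=M_+^{-1}M_-$, conjugation by the symmetric square root $\Sigma^{1/2}$ produces an orthogonal $O$ with $P=\Sigma^{1/2}O\Sigma^{-1/2}$, and condition $(a)$ forces $P^2=I$, hence $O^2=I$ and $O=O^*$. The only difference is that you make explicit, via the trace identity $\tr(K^*K\Sigma)=0$ with $K=P^2-I$, the upgrade of the $\mu$--a.e.\ involution in \eqref{e:M-^(-1)*M+*z=M+^(-1)*M-*z} to the global identity $P^2=I$, a step the paper's proof asserts without detail (and which indeed relies on the full rank of $\Sigma$ exactly as you argue).
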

\begin{proof}
Observe that $\Sigma=\int_{\R^p} \mathbf z \mathbf z^* \mu\big((M_-^{-1}M_+)d\mathbf z\big) = (M_+^{-1}M_-)\Sigma (M_+^{-1}M_-)^*$, by \eqref{e:mu(matrix*dx)=mu(dz)}. If $\Sigma^{1/2}$ is the unique symmetric positive definite square root of $\Sigma$, it follows that
$I = \{\Sigma^{-1/2}(M_+^{-1}M_-)\Sigma^{1/2}\}\{ \Sigma^{1/2} (M_+^{-1}M_-)^*\Sigma^{-1/2}\}$. Therefore, $\Sigma^{-1/2}(M^{-1}_- M_+)\Sigma^{1/2} =O$ for some orthogonal matrix $O\in M(p,\R)$, showing $M_-^{-1} M_+ = \Sigma^{1/2} O\Sigma^{-1/2}$.  Moreover, under condition \eqref{e:M-^(-1)*M+*z=M+^(-1)*M-*z}, $I=(M_-^{-1} M_+)^2=\Sigma^{1/2} O^2\Sigma^{-1/2}$. This shows that $O^2=I$, i.e., $O=O^*$.  Hence, \eqref{e:symmetric_orthog_condition} holds, as claimed. $\Box$\\
\end{proof}

The following lemma is used in Section \ref{s:time_revers}.
\begin{lemma}\label{l:rhofLm_change_Levy_repres}
The rhofLm $\widetilde{X}_H$ as in \eqref{r:rhofLm_change_Levy_repres} can also be represented based on \eqref{e:kappa-tilde=dx_x_mu(dz)} and \eqref{e:mu-tilde}.
\end{lemma}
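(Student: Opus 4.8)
The plan is to prove the lemma by showing that replacing the L\'evy measure $\mu$ by its symmetrization $\widetilde\mu$ in \eqref{e:mu-tilde} leaves all finite-dimensional characteristic functions of $\widetilde X_H$ unchanged; since the joint characteristic function determines the law, the two representations then define the same process in the sense of f.d.d. First I would check that $\widetilde\mu$ is an admissible L\'evy measure for the rhofLm construction of Definition \ref{def:rhofLm}. Since $\overline{{\mathbf w}}^*\overline{{\mathbf w}} = {\mathbf w}^*{\mathbf w}$, the change of variable ${\mathbf z}\mapsto\overline{{\mathbf w}}$ gives $\int_{\C^p}{\mathbf z}^*{\mathbf z}\,\mu(\overline{d{\mathbf z}}) = \int_{\C^p}{\mathbf w}^*{\mathbf w}\,\mu(d{\mathbf w})<\infty$, so $\widetilde\mu$ satisfies \eqref{e:int_|z|^2_mu(dz)<infty}, and clearly $\widetilde\mu(\{{\mathbf 0}\})=0$. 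Because the integrand $\widetilde g_t$ and the parameter $A$ do not depend on the L\'evy measure, the process built from $\widetilde\kappa$ in \eqref{e:kappa-tilde=dx_x_mu(dz)} is therefore well defined.

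The core of the argument rests on the characteristic function \eqref{e:rhofLm_chf}. Writing $W(y)=e^{\imag y}-1-\imag y$ and, for fixed $t_1<\dots<t_m$ and ${\mathbf u}_1,\dots,{\mathbf u}_m\in\R^p$, setting $\phi(x,{\mathbf z}):=2\sum_{j=1}^m {\mathbf u}_j^*\Re\big(\widetilde g_{t_j}(x){\mathbf z}\big)$, the exponent in \eqref{e:rhofLm_chf} is $\int_\R\int_{\C^p} W(\phi(x,{\mathbf z}))\,\mu(d{\mathbf z})\,dx$. The key step is the symmetry identity
$$
\phi(x,\overline{{\mathbf z}}) = \phi(-x,{\mathbf z}), \quad x\in\R,\ {\mathbf z}\in\C^p,
$$
which I would verify term by term: since each ${\mathbf u}_j$ is real and $\Re(c)=\Re(\overline c)$ for any scalar $c$, one has $\Re\big({\mathbf u}_j^*\widetilde g_{t_j}(x)\overline{{\mathbf z}}\big) = \Re\big({\mathbf u}_j^*\overline{\widetilde g_{t_j}(x)}\,{\mathbf z}\big) = \Re\big({\mathbf u}_j^*\widetilde g_{t_j}(-x){\mathbf z}\big)$, the last equality being the Hermitian property $\widetilde g_{t_j}(-x)=\overline{\widetilde g_{t_j}(x)}$ that holds because $\widetilde g_{t_j}\in L^2_{\textnormal{Herm}}(\R)$.

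With this identity in hand I would split the symmetrized exponent using \eqref{e:mu-tilde},
$$
\int_\R\int_{\C^p} W(\phi(x,{\mathbf z}))\,\widetilde\mu(d{\mathbf z})\,dx = \tfrac12\int_\R\int_{\C^p} W(\phi(x,{\mathbf z}))\,\mu(d{\mathbf z})\,dx + \tfrac12\int_\R\int_{\C^p} W(\phi(x,{\mathbf z}))\,\mu(\overline{d{\mathbf z}})\,dx,
$$
and treat the second summand: the change of variable ${\mathbf z}\mapsto\overline{{\mathbf w}}$ turns $\int_{\C^p} W(\phi(x,{\mathbf z}))\,\mu(\overline{d{\mathbf z}})$ into $\int_{\C^p} W(\phi(x,\overline{{\mathbf w}}))\,\mu(d{\mathbf w})=\int_{\C^p} W(\phi(-x,{\mathbf w}))\,\mu(d{\mathbf w})$ by the identity above, after which replacing $x$ by $-x$ in the outer integral (Lebesgue measure being invariant) shows the second summand equals the first. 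Hence the two exponents coincide, so the characteristic functions computed with $\mu$ and with $\widetilde\mu$ agree for every $m$, every $t_1,\dots,t_m$ and every ${\mathbf u}_1,\dots,{\mathbf u}_m$, which is the assertion. I expect no genuine obstacle here; the only point requiring care is the bookkeeping of complex conjugates in the term-by-term verification of $\phi(x,\overline{{\mathbf z}})=\phi(-x,{\mathbf z})$, while all the manipulations are justified by the integrability afforded by \eqref{e:|exp(ix)-1-ix|=<C|x|^2}.
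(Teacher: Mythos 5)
Your proposal is correct and follows essentially the same route as the paper: the paper's proof consists precisely of the change of variables $(x',\mathbf z')=(-x,\overline{\mathbf z})$ in the exponent of the characteristic function \eqref{e:rhofLm_chf}, exploiting the Hermitian property $\widetilde g_t(-x)=\overline{\widetilde g_t(x)}$, which shows the exponent computed with $\mu$ equals that computed with $\mu(\overline{d\mathbf z})$ and hence with $\widetilde\mu$. Your additional checks (admissibility of $\widetilde\mu$ and the explicit averaging step) are fine but only make explicit what the paper leaves implicit.
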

\begin{proof}
This is a consequence of the fact that, for any $m$ and any $t_1,\hdots,t_m$,
$$
\int_{\bbR} \int_{\bbC^p} \Big(e^{\imag \hspace{0.5mm}2 \sum^{m}_{j=1}{\mathbf u}^*_j \Re (\widetilde{g}_{t_j}(x){\mathbf z})}-1 -\imag \hspace{0.5mm}2 \sum^{m}_{j=1}{\mathbf u}^*_j \Re (\widetilde{g}_{t_j}(x){\mathbf z})\Big)\mu(d{\mathbf z}) dx
$$
$$
=\int_{\bbR} \int_{\bbC^p} \Big(e^{\imag \hspace{0.5mm}2 \sum^{m}_{j=1}{\mathbf u}^*_j \Re (\widetilde{g}_{t_j}(x'){\mathbf z}')}-1 -\imag \hspace{0.5mm}2 \sum^{m}_{j=1}{\mathbf u}^*_j \Re (\widetilde{g}_{t_j}(x'){\mathbf z}')\Big)\mu(\overline{d{\mathbf z}'}) dx,
$$
where we make the change of variables $(x',\mathbf z')=(-x,\overline{\mathbf z})$. $\Box$\\
\end{proof}

\section{Tempered operator-stable L\'{e}vy measures}\label{s:tempered_oper-stable_Levy_measures}
 More specifically, the framework of tempered operator-stable L\'{e}vy measures is used in Proposition \ref{p:lass2}.  To recap it, we first recall the notion of operator-stable distributions (see, for instance, Meerschaert and Scheffler \cite{meerschaert:scheffler:2001}).  Let $B\in M(p,\R)$, $\text{eig}\hspace{0.5mm}B\subseteq \{z\in \C: \Re z \in(1/2,\infty)\}$.  Consider a norm $\| \cdot \|_B$ on $\R^q$ with unit sphere $S_0:=\{x:\|x\|_B=1\}$ satisfying
\begin{itemize}
\item[$(i)$] For each $x\in \R^q\setminus\{0\}$, $r\mapsto \| r^B x\|$ is monotonically increasing for $r>0$;
\item[$(ii)$] $(r,\boldsymbol\theta)\mapsto r^B\boldsymbol\theta$ from $\R_+\times S_B$ to $\R^q\setminus\{0\}$ is a homeomorphism
\end{itemize}
(see Lemma 6.1.5 in Meerschaert and Scheffler \cite{meerschaert:scheffler:2001}). A \textit{full operator-stable distribution} (recall that \emph{full} means a distribution is not supported on any proper subspace of $\R^p$) has L\'evy measure $\mu$ that can be written as
\begin{equation}\label{e:def_opstab}
\mu_B(A) = \int_{S_0}\int_{\R_+} 1_A(r^B\boldsymbol\theta)\frac{dr}{r^2}\lambda(d\boldsymbol\theta),
\end{equation}
where $\lambda$ is a finite Borel measure on $S_0$ (see Theorem 7.2.5 in Meerschaert and Scheffler \cite{meerschaert:scheffler:2001}).  Provided $\Re \lambda_q(B)<1$, it can be shown that
$$
\int_{\|\mathbf z\|\geq 1} \|\mathbf z\| \mu_B(d\mathbf z)<\infty
$$
(see Corollary 8.2.6 in Meerschaert and Scheffler \cite{meerschaert:scheffler:2001} and Sato \cite{sato:1999}, Theorem 25.3).  In particular, under the assumption $\Re\lambda_q(B)<1$, the integral
\begin{equation}\label{e:Levy_symbol_finite_first_moment}
\psi_B({\mathbf u}) = \int_{\bbR^q} (e^{\imag \langle{\mathbf u}, {\mathbf z}\rangle}-1-\imag \langle{\mathbf u}, {\mathbf z}\rangle )\hspace{0.5mm}\mu_B(d {\mathbf z}),\quad {\mathbf u} \in \bbR^q,
\end{equation}
(c.f.\ \eqref{e:Levy_symbol_finite_2nd_moment}) exists, and the L\'{e}vy symbol $\psi_B$ satisfies
$
\psi(c^B\mathbf u)  = c\psi(\mathbf u),
$
i.e., the function $e^{\psi(\mathbf u)}$ is the characteristic function of a \emph{strictly} operator-stable distribution $\nu_B$ (see Kremer and Scheffler \cite{kremer:scheffler:2019}, p.\ 4085).  Based on $\nu_B$, we define the random measures $L_B$ used in Proposition  \ref{p:lass2} as $\R^p$-valued (in \eqref{e:opstab_MA})  or $\C^p$-valued (in \eqref{e:opstab_harm}) infinitely divisible independently scattered random measures on  $(\R,\mathcal B(\R))$ generated by $\nu_B$ and the Lebesgue measure on $\R$ in the sense of Example 3.7(a) and Remark 3.8 in Kremer and Scheffler \cite{kremer:scheffler:2017}.

For the purposes of retaining second moments and constructing asymptotically operator self-similar instances of ofLm, we consider \textit{tempered }counterparts of \eqref{e:def_opstab}, where the associated L\'evy measure is given by
\begin{equation}\label{e:def_TalphaS}
\mu_{B,q}(A) =  \int_{S_0}\int_{\R_+} 1_A(r^B\boldsymbol\theta)q(r,\boldsymbol\theta)\frac{dr}{r^2}\lambda(d\boldsymbol\theta), \qquad r\in(0,\infty),\quad \boldsymbol\theta\in S_0.
\end{equation}
In \eqref{e:def_TalphaS},  $q:(0,\infty)\times S_0\to [0,1]$ is any Borel measurable function such that, for $\lambda(d\boldsymbol \theta)$-a.e. $\boldsymbol \theta \in S_0$,
\begin{equation}\label{e:q(.,theta)->0}
q(\cdot,\boldsymbol\theta) \textnormal{ decays to 0}
\end{equation}
sufficiently fast as $r\to \infty$ to guarantee that $\mu_{B,q}$ has second moments and
\begin{equation}\label{e:q(0+,theta)=1}
q(0^+,\boldsymbol \theta)=1
\end{equation}
for each $\boldsymbol \theta\in S_0$ (for instance, $q(r,\boldsymbol\theta)=\mathbf 1_{\{|r|\leq 1\}}, \boldsymbol \theta \in S_0$). When $q$ is a completely monotone function (namely, $(-1)^k \frac{d^k}{dt^k}q(t,\boldsymbol\theta)>0$ for all $t>0$ and each $k\geq 0$), L\'evy measures defined by \eqref{e:def_TalphaS} are called \emph{tempered operator-stable L\'evy measures}, studied in  Ali \cite{ali:2014} (see also the seminal work of Rosi\'{n}ski \cite{rosinski:2007} for the tempered stable case). For conditions for the existence of moments in the tempered stable case, see Rosi\'{n}ski \cite{rosinski:2007}, Proposition 2.7.  For the tempered operator-stable case, see Ali \cite{ali:2014}, \textit{Korollar} 3.2.5.

\bibliographystyle{agsm}

\bibliography{ofLm_main}

\end{document}